\theoremstyle{plain} \newtheorem{thm}{Theorem}[section]
\theoremstyle{plain} \newtheorem{lem}[thm]{Lemma}
\theoremstyle{plain} \newtheorem{prop}[thm]{Proposition}
\theoremstyle{plain} \newtheorem{coro}[thm]{Corollary}
\theoremstyle{plain} 
\theoremstyle{plain} 
\theoremstyle{plain} \newtheorem{assu}[thm]{Assumption}
\theoremstyle{plain} \newtheorem*{claim}{Claim}
\theoremstyle{remark} 
\theoremstyle{remark} \newtheorem{rmk}[thm]{Remark}
\theoremstyle{remark} 
\theoremstyle{remark}
\newcommand{\beq}{\begin{equation}}
	\newcommand{\eeq}{\end{equation}}
\newcommand{\bals}{\begin{align*}}  
	\newcommand{\bal}{\begin{align}}
		\newcommand{\cu}{\textbf}
		\newcommand{\real}{\mathbb{R}}
		\newcommand{\cpx}{\mathbb{C}}
		\newcommand{\posint}{\mathbb{N}}
		\def\a{\alpha}
		\def\b{\beta}
		\def\d{\delta}
		\def\e{\epsilon}
		\def\l{\lambda}
		\def\calA{\mathcal{A}}
		\numberwithin{equation}{section}
		\title{A priori estimates, uniqueness and non-degeneracy of positive solutions of the Choquard equation}
		\author{Zexing Li}
		\address{DPMMS\\University of Cambridge\\
			Cambridge\\ UK}
		\email{zl486@cam.ac.uk}
		\thanks{2010 \textit{AMS Mathematics Subject Classification}.  35Q55.}
		\thanks{Keywords:  Choquard equation, positive solutions, a priori estimates, non-degeneracy, uniqueness.}
\begin{document}
			\maketitle
			
			\begin{abstract}
				We consider the positive solutions for the nonlocal Choquard equation $- \Delta u + u - (|\cdot|^{-\alpha} * |u|^p) |u|^{p-2} u = 0$ in $\real^d$. Compared with ground states, positive solutions form a larger class of solutions and lack variational information. Within the range of parameters of Ma-Zhao's result \cite{ma2010classification} on symmetry, we prove a priori estimates for positive solutions, generalizing the classical method of De Figueiredo-Lions-Nussbaum \cite{de1982priori} to the unbounded domain and the nonlocal nonlinearity in our model. As an application, we show uniqueness and non-degeneracy results for the positive solution of the Choquard equation when $d \in \{ 3, 4, 5\}$, $p \ge 2$ and $(\a, p)$ close to $(d-2, 2)$.
			\end{abstract}

			\tableofcontents
			
			\section{Introduction}
			
			\subsection{Introduction}
			
			In this paper, we consider the equation
			\beq
			- \Delta u + u - (|\cdot|^{-\alpha} * |u|^p) |u|^{p-2} u = 0\quad \text{in} \,\real^d \label{Choquard} \tag{Choquard}
			\eeq
			with $ d \ge 1,\,\alpha \in (0, d)$, $p \in (1, \infty)$ and $u$ a real-valued measurable function. 
			
			This equation (\ref{Choquard}) is usually referred to as Choquard or Choquard-Pekar equation. The case $d=3, \a = d-2, p = 2$ appears in various physical contexts, including quantum mechanics for polaron at rest \cite{pekar1954untersuchungen} and one-component plasma \cite{lieb1977existence}. It is also known as the Schr\"odinger-Newton equation by coupling the Schr\"odinger equation of quantum physics with nonrelativistic Newtonian gravity \cite{bahrami2014schrodinger}. Besides, every solution $u$ of (\ref{Choquard}) relates to a solitary wave solution $\psi(t, x) = e^{it} u(x)$ of the focusing time-dependent generalized Hartree equation
			\beq i\partial_t \psi = -\Delta \psi - (|\cdot|^{-\a} * |\psi|^p)|\psi|^{p-2} \psi,\quad \mathrm{in}\,\real_+ \times \real^d. \label{Hartree} \eeq
			When $p=2$, (\ref{Hartree}) is called Hartree equation, appearing in the study of Boson stars and other physical phenomena \cite{pitaevskii1961vortex}. Please refer to \cite{moroz2017guide} and the references therein for more mathematical and physics background of the Choquard equation (\ref{Choquard}).
			
			Solutions of (\ref{Choquard}) are formally critical points of the action functional
			\beq \mathcal{A}(u) := \frac{1}{2}\int_{\real^d} (|\nabla u |^2 + |u|^2) - \frac{1}{2p} \int_{\real^d} (|\cdot|^{-\a} * |u|^p) |u|^p. \eeq
			One of the most interesting solution is the groundstate $u$, defined as the minimizer of $\calA$ on the Nehari manifold
			\[\calA(u) = \inf\left\{ \calA(v): v\in H^1(\real^d) \backslash \{0\}, \langle \calA'(u), u\rangle = 0  \right\}. \]
			There are many studies of groundstates in the variational and elliptic viewpoint  \cite{LIONS19801063,lieb1977existence,moroz2013groundstates,moroz2017guide} and for the corresponding solitary wave in generalized Hartree equation (\ref{Hartree}) \cite{cazenave1982orbital,miao2009global} as well. 
			
			In this paper, we focus on a larger class of solutions for (\ref{Choquard}): the positive solutions. We say $u$ is a solution of (\ref{Choquard}) in the sense that $u \in H^1(\real^d) \cap L^{\frac{2dp}{2d-\a}}(\real^d)$ and for any test function $\varphi \in C^\infty_c(\real^d)$, 
			\[ \int_{\real^d} \left(\nabla u \cdot \nabla \varphi + u\varphi - (|\cdot|^{-\a} * |u|^p) |u|^{p-2} u \varphi \right) dy = 0.
			\]
			Note that groundstates must be positive from the variational structure and regularity properties (see \cite[Proposition 5.1]{moroz2013groundstates}). The lack of variational information makes the study of positive solutions rather harder. 
			
			Before coming to our results, we recall some basic results of positive solutions for (\ref{Choquard}). 
			\begin{thm}[{see \cite{moroz2013groundstates}}]\label{thmprop}
				For $d \ \ge 1$, $\a \in (0, d)$, $p \in (1, \infty)$ and 
				\beq \frac{d}{2d-\a} > \frac{1}{p} > \frac{d-2}{2d-\a}, \label{para3} \eeq
				then the following results hold.
				\begin{enumerate}
					\item (Existence) There exists at least one groundstate for (\ref{Choquard}). In particular, there exists at least one positive solution for (\ref{Choquard}). 
					\item (Regularity)  If $u$ is a positive solution for (\ref{Choquard}), then $u \in W^{2,r}(\real^d) \cap C^\infty_{loc}(\real^d)$ for $r \in (1, \infty)$. 
					\item (Decay) If $u$ is a positive solution for (\ref{Choquard}) and moreover $p \ge 2$, then there exists $\gamma > 0$ such that $u(x) \le C(u) e^{-\gamma|x|}$.
				\end{enumerate}
			\end{thm}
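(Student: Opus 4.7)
The three assertions are classical and essentially follow the template developed in Moroz--Van Schaftingen for this equation. My plan is to treat each part in turn, relying on the Hardy--Littlewood--Sobolev (HLS) inequality as the key nonlocal tool.

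For part (1), the first step is to verify that under (\ref{para3}) the action functional $\calA$ is well-defined and $C^1$ on $H^1(\real^d)$: indeed, (\ref{para3}) is exactly equivalent to $\frac{2dp}{2d-\alpha} \in [2, \frac{2d}{d-2}]$ (with the usual convention for $d\le 2$), so Sobolev embedding places $|u|^p$ in $L^{\frac{2d}{2d-\alpha}}$, and then HLS bounds the nonlocal term $\int(|\cdot|^{-\alpha}*|u|^p)|u|^p$. One then produces a critical point either by minimizing on the Nehari manifold or by the mountain pass theorem. The main subtlety is loss of compactness from translation invariance, which I would handle via Lions' concentration-compactness principle, combined with the fact that the functional vanishes on any vanishing sequence. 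Once a critical point $u$ is obtained, replacing $u$ by $|u|$ does not change $\calA$, and the strong maximum principle (applied after part (2)) delivers positivity.

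For part (2), the plan is a standard bootstrap. Starting from $u \in H^1 \cap L^{\frac{2dp}{2d-\alpha}}$, HLS with exponent $s = \frac{2d}{2d-\alpha}$ gives
\[
I := |\cdot|^{-\alpha} * |u|^p \in L^{2d/\alpha}(\real^d).
\]
By H\"older, $I|u|^{p-2}u$ lies in some $L^{r_1}$ with $r_1$ strictly better than the starting integrability (this is where (\ref{para3}) is used again to ensure we actually gain). Applying Calder\'on--Zygmund $W^{2,r}$ estimates to $(-\Delta+1)u = I|u|^{p-2}u$ raises the integrability of $u$, and iterating yields $u \in W^{2,r}$ for every $r \in (1,\infty)$; in particular $u \in L^\infty$. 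Smoothness then follows because $I$ inherits arbitrary Sobolev regularity from $|u|^p \in L^1 \cap L^\infty$, and classical Schauder estimates close the argument.

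For part (3), the hypothesis $p \ge 2$ together with $u \in H^1 \cap L^\infty$ from part (2) gives $|u|^p \le \|u\|_\infty^{p-2}|u|^2 \in L^1(\real^d)$, and a standard dominated convergence argument (splitting the convolution kernel into the near and far parts) shows $I(x) \to 0$ as $|x|\to\infty$. Since $|u|^{p-2}$ is bounded (using $p\ge 2$ again), for $|x|\ge R$ sufficiently large we obtain the pointwise differential inequality
\[
-\Delta u + \tfrac{1}{2}u \le 0.
\]
Comparison with the barrier $\psi_\gamma(x) = C e^{-\gamma|x|}$ for $\gamma \in (0, 1/\sqrt{2})$ on $\real^d \setminus B_R$, via the weak maximum principle (valid because $u \to 0$ at infinity by $u \in H^1 \cap W^{2,r}$), yields the claimed exponential decay.

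The main obstacle in each part is different: for (1) it is compactness up to translation; for (2) it is certifying that the first bootstrap step actually gains integrability, which hinges on the lower bound in (\ref{para3}); for (3) the essential point is that $p \ge 2$ simultaneously forces $|u|^p \in L^1$ (so $I$ vanishes at infinity) and keeps $|u|^{p-2}$ bounded, both of which would fail for $p < 2$.
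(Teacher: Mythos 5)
This theorem is quoted in the paper directly from \cite{moroz2013groundstates} without proof, and your three-part sketch (Nehari/mountain-pass with concentration--compactness for existence; the Hardy--Littlewood--Sobolev plus Calder\'on--Zygmund bootstrap for regularity; comparison with an exponential barrier once the Riesz potential vanishes at infinity, using $p\ge 2$ to keep $|u|^{p-2}$ bounded) is exactly the standard argument of that reference. The approach is correct and essentially identical to the cited proof; the only cosmetic slip is that the strict inequalities in (\ref{para3}) place $\frac{2dp}{2d-\alpha}$ in the \emph{open} interval $\left(2,\frac{2d}{d-2}\right)$, which is what makes the problem subcritical.
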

			
			To make use of all these properties, we will restrict our discussion to (\ref{Choquard}) with parameters $(d, \a, p)$ in the range
			\beq \begin{split}
				d \ge 1,\,\,\alpha \in (0, d),\, \,p \in (1, \infty),\quad
				\frac{1}{2} \ge \frac{1}{p} > \frac{d-2}{2d-\a}.
			\end{split}  \label{para2} \eeq 
			Notice that $p\ge 2$ also guarantee $\calA$ to be twice Fr\'echet-differentiable  on $H^1(\real^d)$ \cite[Proposition 3.1]{moroz2017guide}, which is essential in our discussion on uniqueness and non-degeneracy in \S \ref{s13}. 
			
			Next we recall a more involved result on the symmetry of positive solutions. For groundstates, the minimizing property enables a standard rearrangement argument, inferring the radially decreasing property around a fixed point\footnote{In this paper, a non-negative function $u$ is radially decreasing means $u(x) = u(|x|)$ and $\partial_r u(r) \le 0$. We say $u$ is radially decreasing around a fixed point $x_0$ to indicate $u(\cdot - x_0)$ is radially decreasing.} for parameters of full range (\ref{para3}) \cite[Proposition 5.2]{moroz2013groundstates}. As for positive solutions, we can merely rely on the information given by the elliptic equation. Ma-Zhao \cite{ma2010classification} managed to apply a moving plane method in the integral form to prove this symmetry in a narrower range. To clarify, we first define the following assumption on parameters:
			
			\begin{assu}\label{assu1}
				For $(d, \a, p)$ satisfying (\ref{para3}) and $p \ge 2$, we assume there exist constants
				\begin{align*}
					& r, r_1, r_2, r_3 \in \left[ 2, \frac{2d}{d-2} \right],\\
					& t, t_1 \in \left\{ t: \frac{1}{t} \in \left[ \frac{p(d-2)}{2d} - \frac{d-\a}{d}, \frac{\a}{d} \right)  \cap (0, 1) \right\}\\
					& \frac{1}{s} \in \left[\frac{2}{d} , 1 \right] \cap \left[ \frac{1}{r}, \frac{1}{r} + \frac{2}{d} \right]
				\end{align*}
				such that 
				$$ \left\{ \begin{array}{l}
					r_1 \ge p-2, r_2 \ge p-1, r_3 \ge p-1, \\
					\frac{1}{t_1}+ \frac{p-2}{r_1} +\frac{1}{r} = \frac{1}{s},\\
					\frac{p-1}{r_2} + \frac{1}{t} = \frac{1}{s}, \\
					\frac{1}{t}+\frac{d-\a}{d} =
					\frac{p-1}{r_3} + \frac{1}{r}.
				\end{array} \right.
				$$
			\end{assu}
			\begin{rmk}\label{rmkrange}
				This assumption still includes a wide range of interesting cases. \begin{itemize}
					\item One subcase (see \cite[Remark 3]{ma2010classification}) is that 
					\beq 2 < \a < d,\quad \frac{1}{2} \ge \frac{1}{p} > \frac{d-2}{2d-\a}. \label{para4}\eeq 
					In particular, if we define the critical scaling index
					$ s_c := \frac{d}{2} - \frac{d+2-\a}{2(p-1)}$
					for the generalized Hartree equation (\ref{Hartree})\footnote{That is, $\dot{H}^{s_c}(\real^d)$ norm of the solution to (\ref{Hartree}) is invariant under the scaling symmetry of (\ref{Hartree}).}, then (\ref{para4}) includes the whole interrange case $0 < s_c < 1$ when $p=2$.
					\item Another subcase is the perturbation of $(\a, p ) = (d-2, 2)$:
					\beq |\a - (d-2)| \le \frac{1}{100},\quad 0 \le p-2 \le \frac{1}{100}, \quad d \in \{ 3, 4, 5\}. \eeq
					The case $d = 5$ is contained in (\ref{para4}). For $d= 3, 4$ we can directly verify  
					\begin{align*}
						r = r_1 = \frac{8}{3}, r_2 = \frac{8}{3}(p-1), r_3 = \frac{12(p-1)}{3-4(\a -1)}, t = \frac{24}{7}, t_1 = \frac{24}{7 - 9(p-2)}, s = \frac{3}{2} \quad \mathrm{when}\,\, d=3;\\
						r = r_1 = \frac{8}{3}, r_2 = \frac{8(p-1)}{3}, r_3= \frac{8(p-1)}{3-2(\a - 2)}, t = 4, t_1 = \frac{8}{2-3(p-2)}, s = \frac{8}{5} \quad \mathrm{when}\,\, d=4.
					\end{align*}
				\end{itemize} 
			\end{rmk}

			Now we can state their result as 
			\begin{thm}[{\cite[Theorem 2]{ma2010classification}}]\label{mazhao}
				Any positive solution of (\ref{Choquard}) with $(d, \a, p)$ satisfying (\ref{para3}), $p \ge 2$ and Assumption \ref{assu1} must be radially decreasing around some fixed point. 
			\end{thm}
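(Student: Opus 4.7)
The plan is to apply the method of moving planes in integral form, as developed by Chen-Li-Ou for nonlocal equations. The first step is to rewrite \eqref{Choquard} as an integral equation: since any positive solution $u$ satisfies $(-\Delta + 1) u = (|\cdot|^{-\a} * u^p) u^{p-1}$ and decays at infinity by Theorem \ref{thmprop}, we invert $-\Delta + 1$ via the Bessel kernel $G$ to obtain
\[ u(x) = \int_{\real^d} G(x-y) \left(\int_{\real^d} |y-z|^{-\a}\, u(z)^p\, dz \right) u(y)^{p-1}\, dy. \]

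For $\lambda \in \real$, introduce the half-space $\Sigma_\lambda := \{x \in \real^d : x_1 < \lambda\}$, the reflection $x^\lambda := (2\lambda - x_1, x_2, \ldots, x_d)$, and $u_\lambda(x) := u(x^\lambda)$. Because both $G(\cdot - \cdot)$ and $|\cdot - \cdot|^{-\a}$ are reflection-symmetric across the hyperplane $T_\lambda = \{x_1 = \lambda\}$, standard manipulation of the integrals in the representation formula (splitting each into $\Sigma_\lambda$ and its reflected complement) yields a pointwise inequality
\[ (u - u_\lambda)_+(x) \le \int_{\Sigma_\lambda} K_\lambda(x, y)\, (u - u_\lambda)_+(y)\, dy \quad \text{on } \Sigma_\lambda, \]
with $K_\lambda \ge 0$ built from $G$, $|\cdot|^{-\a}$ and powers of $u$, $u_\lambda$, together with their Riesz potentials $v, v_\lambda$ (where $v := |\cdot|^{-\a} * u^p$).

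The crux is to bound the $L^r(\Sigma_\lambda) \to L^r(\Sigma_\lambda)$ operator norm of this integral operator. Applying the Hardy-Littlewood-Sobolev inequality to the $|\cdot|^{-\a}$ factor and H\"older's inequality with exponents $r_1, r_2, r_3, s, t, t_1$ (precisely chosen so that the scaling relations in Assumption \ref{assu1} hold), one arrives at
\[ \|(u - u_\lambda)_+\|_{L^r(\Sigma_\lambda)} \le C(\lambda)\, \|(u - u_\lambda)_+\|_{L^r(\Sigma_\lambda)}, \]
where $C(\lambda)$ is built from $L^q$-norms of $u$ and $v$ over $\Sigma_\lambda$ for the relevant exponents $q$. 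Since $u \in L^q(\real^d)$ for such $q$ (by the regularity in Theorem \ref{thmprop}) and analogously for $v$ via HLS, absolute continuity of the integral gives $C(\lambda) \to 0$ as $\lambda \to -\infty$, so $(u - u_\lambda)_+ \equiv 0$ in $\Sigma_\lambda$ for all $\lambda$ sufficiently negative. This is the base case of the moving plane.

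Setting $\lambda_0 := \sup\{\lambda : u \le u_\mu \text{ in } \Sigma_\mu \text{ for every } \mu \le \lambda\}$, the continuity and decay of $u$ force $\lambda_0 < \infty$. If $(u - u_{\lambda_0})_+ \not\equiv 0$ somewhere, one re-runs the operator estimate on the slightly larger half-space $\Sigma_{\lambda_0 + \varepsilon}$ via a narrow-region argument --- the newly added strip has small $L^q$-mass, forcing $C(\lambda_0 + \varepsilon) < 1$ --- contradicting the definition of $\lambda_0$. Hence $u \equiv u_{\lambda_0}$, giving symmetry in the $x_1$-direction, and repeating along every direction produces a single center of radial symmetry, with monotonicity along each moving plane yielding the radially decreasing property. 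The main obstacle is verifying that the exponents in Assumption \ref{assu1} indeed close the H\"older/HLS chain and that the narrow-region constant survives the doubly nonlocal structure of $K_\lambda$; this technical numerology is exactly what restricts the admissible parameter range.
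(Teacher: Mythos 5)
This statement is quoted verbatim from Ma--Zhao (\cite[Theorem 2]{ma2010classification}); the paper supplies no proof of its own, only the citation. Your outline is exactly the strategy of that reference --- rewrite the equation as the integral system $u = G*(vu^{p-1})$, $v=|\cdot|^{-\a}*u^p$ and run the Chen--Li--Ou moving plane in integral form, with the base case from absolute continuity of the integral and the narrow-region/small-measure argument at the critical plane --- so in that sense you have reproduced the intended approach. The one caveat is that the load-bearing content is precisely what you defer: the coupled pointwise inequalities for \emph{both} $(u-u_\lambda)_+$ and $(v-v_\lambda)_+$ (you fold the latter into an unspecified kernel $K_\lambda$), and the H\"older/HLS bookkeeping that the exponents $r,r_1,r_2,r_3,s,t,t_1$ of Assumption \ref{assu1} are designed to close; as written this is a correct roadmap of the cited proof rather than a self-contained argument.
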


			This gives us motivation to study radially decreasing positive solutions of (\ref{Choquard}). 
			
			%
			%
			%
			
			\subsection{A priori estimates}\label{s12}
			
			Recall that Theorem \ref{thmprop} indicates that every positive solution of (\ref{Choquard}) is bounded in $W^{2, r}(\real^d)$ and decays exponentially. Our first main result indicates that if we further require the positive solution to be radially decreasing, it has an a priori upper bound, depending only on $(d, \a, p)$ in a uniform way.
			\begin{thm}\label{thmbound}
				For a radially decreasing positive solution $u$ of (\ref{Choquard}) with $(d, \a, p)$ satisfying (\ref{para2}), for $r \in (1, \infty)$, there exists constants $C(d, \a, p; r)$ and $C' (d, \a, p)$ such that 
				\begin{align}
					\| u \|_{W^{2, r}(\real^d)} &\le C(d, \a, p; r),\label{aprioribound} \\
					|\nabla u(s)| + u(s) &\le C'(d, \a, p) e^{ -\frac{s}{2}},\quad s \ge 0. \label{aprioribound2}
				\end{align}
				Moreover, these constants depend continuously on $(\a, p)$, indicating that this is a uniform bound for $(\alpha, p)$ satisfying (\ref{para2}) taking values in a compact subset.
			\end{thm}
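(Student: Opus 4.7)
The plan is to reduce the theorem to a uniform $L^\infty$ bound on $u$, from which the $W^{2,r}$ estimate and the explicit exponential decay follow from standard elliptic estimates applied to the Riesz-potential nonlinearity. To establish the $L^\infty$ bound I would argue by contradiction via a blow-up and Liouville argument in the spirit of De Figueiredo--Lions--Nussbaum, adapted to the unbounded domain and the nonlocal nonlinearity. Suppose there were a sequence $u_n$ of radially decreasing positive solutions of (\ref{Choquard}) with parameters $(\alpha_n,p_n)$ in a compact subset of (\ref{para2}) and with $M_n := u_n(0) = \|u_n\|_{L^\infty} \to \infty$. Setting $\beta_n := (d-\alpha_n+2)/(2(p_n-1))$ (the scaling exponent balancing $\Delta$ against the nonlocal nonlinearity) and $v_n(y) := M_n^{-1} u_n(M_n^{-1/\beta_n} y)$, the function $v_n$ is radially decreasing with $v_n(0) = \|v_n\|_{L^\infty} = 1$ and
\[ -\Delta v_n + \mu_n v_n = (|\cdot|^{-\alpha_n}\ast v_n^{p_n})\, v_n^{p_n-1}, \qquad \mu_n := M_n^{-2/\beta_n} \to 0. \]

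The next step is to extract a limit and apply a Liouville theorem. After securing a uniform local $L^\infty$ bound on the convolution $(|\cdot|^{-\alpha_n}\ast v_n^{p_n})$ (splitting into $|y|\le R$ and $|y|\ge R$ and using $v_n\le 1$ on the inner piece plus a uniform tail estimate on the outer piece), standard elliptic regularity makes $\{v_n\}$ bounded in $C^{1,\gamma}_{\mathrm{loc}}$, so a subsequence converges in $C^1_{\mathrm{loc}}$ to a radially decreasing nonnegative $v_\infty$ with $v_\infty(0) = 1$. Passing to the limit in the equation, $v_\infty$ solves the scale-critical Choquard equation $-\Delta v_\infty = (|\cdot|^{-\alpha_*}\ast v_\infty^{p_*})\, v_\infty^{p_*-1}$. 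The Nehari identity $\int|\nabla v_\infty|^2 = \int(|\cdot|^{-\alpha_*}\ast v_\infty^{p_*}) v_\infty^{p_*}$ combined with the Pohozaev identity $(d-2)\int|\nabla v_\infty|^2 = \tfrac{2d-\alpha_*}{p_*}\int(|\cdot|^{-\alpha_*}\ast v_\infty^{p_*}) v_\infty^{p_*}$ then forces $v_\infty \equiv 0$ in the strictly subcritical range $1/p > (d-2)/(2d-\alpha)$ enforced by (\ref{para2}), contradicting $v_\infty(0) = 1$.

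Once the uniform $L^\infty$ bound is in hand, Theorem \ref{thmprop}(3) combined with it places $u$ uniformly in $L^r$ for every $r \in [1,\infty]$; a standard splitting of the Riesz kernel then shows $|\cdot|^{-\alpha}\ast u^p$ is uniformly bounded in $L^\infty$, so the right-hand side of (\ref{Choquard}) is uniformly in $L^r$ and Calder\'on--Zygmund estimates yield (\ref{aprioribound}). For the quantitative decay (\ref{aprioribound2}), I observe that $(|\cdot|^{-\alpha}\ast u^p)(x) = O(|x|^{-\alpha})$ as $|x|\to\infty$ (since $u^p \in L^1$), so the nonlinearity is dominated by $\tfrac{3}{4} u$ outside a fixed ball, giving $-\Delta u + \tfrac14 u \le 0$ for $|x|$ large; comparison with the radial supersolution $e^{-|x|/2}$ of $-\Delta w + \tfrac14 w = 0$ yields the rate $e^{-s/2}$, and the gradient bound follows from interior Schauder. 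Continuity of the constants in $(\alpha,p)$ is automatic because every step above depends continuously on the parameters, giving the uniform bound for $(\alpha,p)$ in a compact subset of (\ref{para2}).

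The hard part will be the compactness step, where nonlocality means that the tail behaviour of $v_n$ at infinity directly controls the pointwise value of $|\cdot|^{-\alpha_n}\ast v_n^{p_n}$ everywhere, so without a uniform tail estimate on $v_n$ the convolution could concentrate or evaporate in the limit and $v_\infty$ would fail to solve the scale-critical equation. Providing this uniform tail estimate -- extracting it from the rescaling behaviour of the subcritical Sobolev integral (the exponent $2dp/(2d-\alpha)$ being strictly larger than $d/\beta$ exactly in the subcritical regime, so that $L^{2dp/(2d-\alpha)}$ mass at infinity vanishes under the zoom) combined with the radial decreasing structure -- is the delicate technical point, and it is there that Assumption \ref{assu1} (which supplies radial symmetry of $u$ via Theorem \ref{mazhao}) and the hypothesis $p\ge 2$ (which supplies the exponential decay of Theorem \ref{thmprop}(3)) play their essential role.
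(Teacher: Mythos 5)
Your proposal is essentially the Gidas--Spruck blow-up method, which the paper explicitly considers and explicitly rejects in \S\ref{s12}: the authors note that blow-up ``may also work for our problem to get an $L^\infty$ bound. However, this is not enough to control $L^p$ norm in unbounded space.'' That is precisely the gap in your argument, and it is not a technicality.

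Concretely, after your blow-up argument (supposing its Liouville and compactness steps are salvageable --- see below) you obtain a uniform bound on $\|u\|_{L^\infty}$. You then write that ``Theorem~\ref{thmprop}(3) combined with it places $u$ uniformly in $L^r$ for every $r\in[1,\infty]$.'' This is false. Theorem~\ref{thmprop}(3) states $u(x)\le C(u)e^{-\gamma|x|}$ with a constant $C(u)$ that depends on the individual solution $u$, not uniformly on the parameters; nothing about a uniform $L^\infty$ bound upgrades it to a uniform constant. An $L^\infty$ bound does not preclude a radially decreasing positive solution from being nearly flat --- say, above the threshold $\delta_0$ --- over a ball of arbitrarily large radius $R$; such a solution satisfies $\|u\|_{L^\infty}\lesssim 1$ but $\|u\|_{L^2}\gtrsim R^{d/2}$. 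The same non-uniformity infects your claim $(|\cdot|^{-\alpha}\ast u^p)(x)=O(|x|^{-\alpha})$, since the implicit constant is $\|u\|_{L^p}^p$, which is exactly what you have not yet bounded. The entire difficulty on the unbounded domain is to show that the region where $u$ (equivalently, the Riesz potential $H[u]$) is not small has a uniformly bounded radius; this is what the paper's Step~3 in \S\ref{s32} does, via the nonlinear eigenfunction estimate (Step~1), the pointwise equivalence Proposition~\ref{propnonlocal1}, the strict decrease of the convolution (Proposition~\ref{propnonlocalevo}), and an ODE analysis of the radial profile. None of this appears in your plan.

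There are secondary problems in the blow-up step itself. Your Liouville argument invokes the Nehari and Pohozaev identities for the limit $v_\infty$, but $v_\infty$ is only obtained as a $C^1_{\mathrm{loc}}$ limit and need not lie in $H^1(\real^d)$ or have enough decay to justify the integrations by parts; the subcritical Choquard Liouville theorem is true but its proof is substantially more involved than a formal Pohozaev computation. You do correctly flag the compactness obstruction (uniform tail control of $v_n$ so that the convolution $|\cdot|^{-\alpha_n}\ast v_n^{p_n}$ passes to the limit), but the sentence you offer in its place is not an argument, and in fact the rescaled $L^2$ mass $\|v_n\|_{L^2}^2 = M_n^{(d-2\beta_n)/\beta_n}\|u_n\|_{L^2}^2$ can blow up, so the tail is not controlled by what you know. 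In short, your proposal substitutes a method the authors considered and discarded, omits what they identify as the central obstacle (``exclude flatness in the connection region''), and the $L^\infty$-to-$L^r$ step is simply incorrect.
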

			
			Combine Theorem \ref{mazhao} and Theorem \ref{thmbound}, we can remove the radially decreasing condition under Assumption \ref{assu1}.
			
			\begin{thm}
				For any positive solution $u$ of the Choquard equation (\ref{Choquard}) with $(d, \a, p)$ satisfying (\ref{para2}) and Assumption \ref{assu1}, we have the same uniform a priori bounds (\ref{aprioribound}) and (\ref{aprioribound2}).
			\end{thm}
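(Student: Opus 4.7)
The statement is essentially a corollary obtained by chaining Theorem \ref{mazhao} with Theorem \ref{thmbound}, so the plan is a short bookkeeping argument rather than a new estimate. First I would verify that the hypotheses align. The parameter condition (\ref{para2}) requires $\frac{1}{p} \le \frac{1}{2}$, i.e.\ $p \ge 2$, and already implies (\ref{para3}) since (\ref{para3}) only adds the upper bound $\frac{1}{p} < \frac{d}{2d-\a}$, which is automatic from $\frac{1}{p} \le \frac{1}{2} < \frac{d}{2d-\a}$. Consequently, under (\ref{para2}) together with Assumption \ref{assu1} every hypothesis of Theorem \ref{mazhao} is met, so any positive solution $u$ is radially decreasing around some point $x_0 \in \real^d$.

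Next I would exploit the translation invariance of the equation. Since the Laplacian, the identity, and the Riesz convolution $|\cdot|^{-\a} *$ all commute with translations, the translated function $\tilde u(x) := u(x + x_0)$ is itself a positive solution of (\ref{Choquard}) with the same parameters $(d, \a, p)$, and it is radially decreasing around the origin. Theorem \ref{thmbound} then applies directly to $\tilde u$, yielding
\begin{align*}
\| \tilde u \|_{W^{2,r}(\real^d)} &\le C(d, \a, p; r), \\
|\nabla \tilde u(s)| + \tilde u(s) &\le C'(d, \a, p)\, e^{-s/2}, \quad s \ge 0,
\end{align*}
with constants depending only on $(d, \a, p)$, and continuously so.

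Translating back, the $W^{2,r}$ norm is translation-invariant, so $\| u \|_{W^{2,r}(\real^d)} = \| \tilde u \|_{W^{2,r}(\real^d)}$ and (\ref{aprioribound}) follows with the same constant. The pointwise decay transfers in the form $|\nabla u(x)| + u(x) \le C'(d, \a, p)\, e^{-|x - x_0|/2}$ for every $x \in \real^d$, which is the content of (\ref{aprioribound2}) once one interprets the radial variable as the distance from the (uniquely defined) center of symmetry. Uniformity on compact subsets of parameter space follows automatically from the continuous dependence of $C$ and $C'$ on $(\a, p)$ asserted in Theorem \ref{thmbound}.

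There is no real analytic obstacle in this step; the only point requiring attention is the compatibility check between the parameter ranges (\ref{para2}) and (\ref{para3}) and the convention that the decay estimate in Theorem \ref{thmbound} is now centered at the symmetry point $x_0$ produced by Theorem \ref{mazhao}. All the substance of the result is already contained in Theorems \ref{mazhao} and \ref{thmbound}.
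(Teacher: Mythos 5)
Your proposal is correct and matches the paper's (implicit, one-line) argument exactly: the paper obtains this theorem precisely by combining Theorem \ref{mazhao} with Theorem \ref{thmbound}, and your checks that (\ref{para2}) implies (\ref{para3}) with $p \ge 2$ and that the bounds are translation-invariant are the only bookkeeping required.
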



			There is a long history of studying a priori bounds for positive solutions of elliptic equations \cite{de1982priori,gidas1981priori, souto1995priori,de2001priori,ruiz2004priori,barrios2018priori}. Such a priori bounds provide lots of information about existence of positive solution and the structure of the positive solutions set, see \cite{brezis1977class,lions1982existence,ni1985uniqueness} and \S \ref{s13}. In more recent work of proving uniqueness for nonlinear groundstates of fractional Laplacians \cite{frank2016uniqueness,frank2013uniqueness}, one crucial step is to derive an a priori bound for the global bifurcation branch.
			
			Among these works, three methods have been used to derive a priori bounds. In \cite{frank2013uniqueness}, Frank-Lenzmann controls the solutions in the branch through its "evolution" in the parameter space. It requires a non-degeneracy result to construct the bifurcation branch, which is highly non-trivial. The second approach is a blowup method from Gidas-Spruck \cite{gidas1981priori}. It is a contradictory argument, reducing the problem of a priori bounds to Liouville result of some simple equation by rescaling. This method is powerful and may also work for our problem to get an $L^\infty$ bound. However, this is not enough to control $L^p$ norm in unbounded space, and such contradictory argument provides relatively little information on the shape of the solution. The third method due to De Figueiredo-Lions-Nussbaum \cite{de1982priori} derives the bound in a direct way. They exploit the positive eigenfunction to get some a priori bound and get the desired bound with functional identities and the subcriticality nature of the equation. We adapt this method to our problem. 
			
			To our knowledge, however, the second and third method have only been applied for problems on bounded domains with local nonlinearities (although the blowup method can tackle systems of equations, see for example \cite{souto1995priori}). Perhaps the main innovation of this paper is to deal with the unboundedness of domain and nonlocality of nonlinearity. Nevertheless, one good point in our setting is the symmetry of $\real^d$, so we can add radially decreasing property to our positive solutions thanks to Theorem \ref{mazhao}.
			
			The unboundedness of domain causes trouble in two ways. On the one hand, we lack of eigenfunctions of Laplacian to get initial a priori information. This can be substituted by a nonlinear positive eigenfunction, namely the groundstate of nonlinear elliptic equation. It gives weaker information but enough for us. On the other hand, a more challenging problem is to control the solution in unbounded domains. Naturally, the exponential decay when $u$ small is a good enough bound in some exterior region. The local argument in \cite{de1982priori} basically ensures good bound for any fixed interior region, but the question is whether the non-exponential-decay interior region can be arbitrarily large. The crux is to exclude flatness in the connection region. We apply the a priori information and an ODE argument for this.
			As a model case, we study the following semilinear elliptic equation with a local nonlinearity
			\beq \label{semi} -\Delta u + u - |u|^{p-1} u = 0 \quad \text{in}\,\real^d \tag{Model} \eeq
			where $p \in (1, \infty)$ for $d = 1, 2$ and $p \in (1, \frac{d+2}{d-2})$ for $d \ge 3$. We have the following conclusion
			\begin{thm}\label{thmsemi}
				The positive $H^1$ solution $u$ of (\ref{semi}) has a priori bounds
				\beq \| u \|_{H^1} \le C(p, d), \eeq
				where $C(p, d)$ depends continuously on $p$. 
			\end{thm}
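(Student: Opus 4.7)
The plan is to adapt the De~Figueiredo--Lions--Nussbaum strategy to $\real^d$, replacing the first Dirichlet eigenfunction (absent here) by a fixed positive ground state $Q$ of (\ref{semi}) acting as a ``nonlinear eigenfunction'', and handling the unboundedness by splitting into interior and exterior regions. By Gidas--Ni--Nirenberg, any positive $H^1$ solution is radial and strictly decreasing about some translate, so I may assume $u$ is radial about the origin; standard regularity then gives $u\in C^2(\real^d)$ with $u\to 0$ at infinity. The Pohozaev identity combined with the energy identity $\|u\|_{H^1}^2=\|u\|_{L^{p+1}}^{p+1}$ reduces the claim to a uniform bound on $\|u\|_{L^{p+1}}$.

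For initial a priori information, pairing the equations of $u$ and $Q$ against each other and subtracting yields the identity
\[
\int_{\real^d} u^p\, Q\,dx \;=\; \int_{\real^d} u\, Q^p\,dx.
\]
Setting $w:=u/Q$ and $d\mu:=Q^{p+1}\,dx/\|Q\|_{L^{p+1}}^{p+1}$, this reads $\mathbb{E}_\mu[w^p]=\mathbb{E}_\mu[w]$, and Jensen applied to the convex function $x\mapsto x^p$ yields $\mathbb{E}_\mu[w]\le 1$, i.e.\ $\int u\,Q^p\,dx\le\|Q\|_{L^{p+1}}^{p+1}$. This plays the role of the DLN initial bound $\int u^p\varphi_1\le C\int u\varphi_1$, and because $Q^p>0$ everywhere it controls $\int_{B_R} u$ for every fixed $R$.

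To upgrade this to a global bound I split at $R_\ast:=\sup\{r\ge 0:u(r)\ge\tfrac12\}$. On $\{r\ge R_\ast\}$, $u^{p-1}\le 2^{-(p-1)}<1$ gives $-\Delta u+\theta u\le 0$ with $\theta:=1-2^{-(p-1)}>0$; comparison with the Bessel-type supersolution of $(-\Delta+\theta)G=0$, $G(R_\ast)=\tfrac12$, $G\to 0$, yields $u(r)\le C(p,d)\,r^{-(d-1)/2}e^{-\sqrt\theta(r-R_\ast)}$ uniformly in $R_\ast$, so the exterior contribution to every $L^q$ norm is uniformly bounded. It remains to bound $M:=u(0)$ and $R_\ast$. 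If $M\to\infty$ along a sequence, rescaling the radial ODE at the natural Lane--Emden scale $M^{-(p-1)/2}$ and inserting the concentrated profile $u(x)\approx M\,V(M^{(p-1)/2}x)$ into the above identity forces $M^{p-1}\to Q(0)^{p-1}\int V/\int V^p$, a finite quantity, contradicting $M\to\infty$. If instead $R_\ast\to\infty$, then $\|u\|_{L^2}^2\gtrsim R_\ast^d$, while the ``no flatness'' observation $\Delta u(R_\ast)=\tfrac12(1-2^{-(p-1)})>0$ forces $|u'(R_\ast)|\ge c(p,d)>0$ (otherwise $u$ would curve upward past $R_\ast$, violating monotonicity); a quantitative ODE/phase-plane analysis in the connection region, fed by the weighted bound on $\int u\,Q^p$, rules this out.

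The main obstacle is precisely this last alternative: on a bounded domain DLN close the argument by compactness of $\overline\Omega$, and Gidas--Spruck use a blow-up reduction to a Liouville theorem, but neither applies directly in the present unbounded setting. The novelty is the quantitative ODE analysis in the connection region with the groundstate identity as a priori input. Continuity of $C(p,d)$ in $p$ then follows because every constant produced ($\theta$, $\sqrt\theta$, the Pohozaev/energy coefficients, and $\|Q\|_{L^q}$) depends continuously on $p$ throughout the subcritical range.
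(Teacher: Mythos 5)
Your overall framework --- radial reduction via Gidas--Ni--Nirenberg, pairing against a positive ground state $Q$ to replace the missing first Dirichlet eigenfunction, exponential decay in the exterior region, and a separate treatment of the interior and connection regions --- coincides with the paper's. Your Jensen-inequality derivation of $\int u Q^{p}\,dx\le \|Q\|_{L^{p+1}}^{p+1}$ from the identity $\int u^{p}Q=\int uQ^{p}$ is correct and is a clean variant of the paper's Step 1 (which instead splits according to the size of $u$ and works with a ground state of a fixed exponent $p_0$). However, the two steps that actually carry the proof are not established. First, your exclusion of $u(0)=M\to\infty$ is not a proof as written: in the subcritical range the rescaled limit $V$ would have to be a nontrivial bounded positive solution of $-\Delta V=V^{p}$ on $\real^d$, which does not exist (Gidas--Spruck Liouville theorem), so the profile you insert into the identity is fictitious; and even granting a limit profile, the computation $M^{p-1}\to Q(0)^{p-1}\int V/\int V^{p}$ presupposes that \emph{both} integrals $\int u^{p}Q$ and $\int uQ^{p}$ are dominated by the concentration core at scale $M^{-(p-1)/2}$, which is unjustified --- $\int uQ^{p}$ may well be carried by the region where $u$ is of order one. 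The paper never needs a blow-up argument here: it bounds $u$ on $B_1$ directly by the localized Pohozaev identity on $B_1$ together with subcriticality (the De Figueiredo--Lions--Nussbaum mechanism), using only the a priori bounds on $u(1)$ and $u'(1)$ coming from Step 1 and the ODE.

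Second, and this is what the paper calls the crux, your exclusion of $R_*\to\infty$ is a placeholder: ``a quantitative ODE/phase-plane analysis \dots\ rules this out'' is not an argument, and the preceding lower bound $|u'(R_*)|\ge c(p,d)$ (whose justification via ``curving upward'' is a heuristic, not a proof) would not by itself contradict $R_*\to\infty$ even if established. The actual mechanism in the paper is a scaling mismatch across the Pohozaev relation $\|\nabla u\|_{L^2(\real^d)}^2\sim_{d,p}\|u\|_{L^2(\real^d)}^2$: on one side $\|u\|_{L^2(B_{R_*})}^2\gtrsim R_*^{d}$ since $u$ is bounded below on $B_{R_*}$; on the other side $\|\nabla u\|_{L^2(B_{R_*}\setminus B_1)}^2\lesssim R_*^{d-1}$, because $y:=-\partial_r u$ satisfies $\int_1^\infty y\,dr=u(1)\lesssim_{d,p}1$ and is uniformly bounded via the identity $(r^{d-1}y)'=r^{d-1}(u-u^{p})$, while $\|\nabla u\|_{L^2(B_1)}^2\lesssim 1$ by the local Pohozaev argument. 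Comparing forces $R_*^{d}\lesssim R_*^{d-1}+1$, hence $R_*\lesssim_{d,p}1$. None of this $L^2$-versus-$\dot H^1$ comparison, nor the $L^1$--$L^\infty$ interpolation bound on $y$ in the connection region, appears in your proposal, so the decisive step is missing. (A further minor issue: a compactness-by-contradiction treatment of the $M\to\infty$ case would not automatically yield a constant depending continuously on $p$, as you claim at the end.)
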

			Although this result can be obtained from the existence \cite{berestycki1979existence} and uniqueness \cite{kwong1989uniqueness} of the positive solution, this theorem may still of its own interest and possible to generalize to more complicated nonlinearity. 
			
			Regarding the second difficulty of (\ref{Choquard}), the nonlocal nonlinearity $(|\cdot|^{-\a} * |u|^p) |u|^{p-2} u$, 
			we will exploit the good struture of this nonlocality to bound some norm of $u$ (see Proposition \ref{propnonlocal1}). Also, we view it as another radial function and study its evolution (Proposition \ref{propnonlocalevo}) along with that of $u$. Finally, after generalizing the argument of subcriticality in \cite{de1982priori} to nonlocal case, we can complete the proof of Theorem \ref{thmbound} in a similar framework as Theorem \ref{thmsemi}.
			
			\subsection{Non-degeneracy and uniqueness}\label{s13}
			
			Thereafter, we consider the uniqueness and non-degeneracy (explained later) of the positive solution of (\ref{Choquard}). 
			These results are essential for discussing the dynamics of the corresponding solitary wave solution $\psi(t, x) = e^{it} u(x)$ of the focusing time-dependent generalized Hartree equation. The uniqueness clarifies what makes up the minimal obstruction of global-wellposedness and scattering, and the non-degeneracy provides suitable spectral condition for perturbative analysis. See \cite{zhou2022threshold,krieger2009two,krieger2009stability} and see \cite{lenzmann2009uniqueness} for more applications. 
			
			However, relatively little is known on the uniqueness and non-degeneracy for positive solutions or groundstates of (\ref{Choquard}). For the isolated case $(d, \a, p) = (3, 1, 2)$, Lieb \cite{lieb1977existence} used the special structure of Newtonian potential $|x|^{-(d-2)}$ to prove the uniqueness of the radial positive solution (hence also of the groundstate) and non-degeneracy was verified by \cite{tod1999analytical,wei2009strongly,lenzmann2009uniqueness}. These results can be easily generalized to the positive solution of $d \in \{4, 5\}$, $(\a ,p) = (d-2, 2)$ ( see \cite[Appendix A]{arora2019global} for uniqueness and \cite{chen2021nondegeneracy} for non-degeneracy). The radial condition was removed by Ma-Zhao's result \cite{ma2010classification}. Besides, we have uniqueness and non-degeneracy of the groundstate for $d \ge 3$, $p \in (2, \frac{2d}{d-2})$ and $\a$ close to $0$ or $d$ \cite{seok2019limit} and for $d\in \{ 3, 4, 5\}$, $\a = d-2$ and $p \in [2, 2+\d]$ for some $\d > 0$ \cite{xiang2016uniqueness}. Both are proved by perturbative arguments. We also mention another nonlocal elliptic problem, the nonlinear fractional Laplacian equation, where the uniqueness and non-degeneracy of groundstates were resolved partially in \cite{fall2014uniqueness} and later completely in \cite{frank2016uniqueness,frank2013uniqueness}. All these results for continuous exponents utilize more or less variational information of groundstates and thus not work for positive solutions. 
			
			In this paper, with our a priori bound Theorem \ref{thmbound}, we prove the uniqueness and nondegeneracy of the positive solution for (\ref{Choquard}) with parameters $d \in \{3, 4, 5\}$ and $(\a, p) \in [d-2-\d, d-2+\d] \times [2, 2+\d]$ for some $\d > 0$. It can be viewed as generalization of \cite{xiang2016uniqueness} to all positive solutions, two-dimensional perturbation of parameters and also to multiple dimensions, or generalization of \cite{ma2010classification} to a neighborhood of exponents.
			
			Define the corresponding linearized operator $L_{+}$  associated with a function $Q$ and for parameters $(d, \a, p)$ to be\footnote{Usually, we take $Q$ to be $Q_{d, \a, p}$, a positive solution for (\ref{Choquard}) with these parameters. So we may omit these parameters and denote $L_{+, Q_{d, \a, p}, d, \a, p}$ as $L_{+, Q_{d, \a, p}}$ for simplicity. We may even leave out $d$ if $(\a, p)$ are emphasized and no ambiguity occurs.}
			\beq
			L_{+, Q, d, \a, p} \xi := -\Delta \xi + \xi - (p-1) (|\cdot|^{-\alpha} * Q^p) Q^{p-2} \xi - p (|\cdot|^{-\alpha} * (Q^{p-1} \xi) )Q^{p-1} \label{Lplus}  \eeq
			as a nonlocal operator on $L^2(\real^d)$. Then we can state our results on non-degeneracy and uniqueness
			\begin{thm}[Non-degeneracy]\label{thmnondeg}
				For $d \in \{3, 4, 5\}$, There exists $\delta > 0$ such that for   $(\a, p) \in [d-2 - \delta, d-2+\d] \times [2, 2+\d]$, any positive solution $Q_{\a, p}$ of (\ref{Choquard}) is non-degenerate, namely
				\beq Ker L_{+, Q_{ \a, p}} = \mathrm{span}\{\partial_i Q_{\a, p}\}_{i=1}^d \label{nondeg} \eeq
			\end{thm}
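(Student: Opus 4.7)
The plan is a compactness-and-perturbation argument, bootstrapping off the known non-degeneracy at the base point $(\alpha, p) = (d-2, 2)$: for $d=3$ this is the result of Lenzmann \cite{lenzmann2009uniqueness}; for $d \in \{4, 5\}$ it is \cite{chen2021nondegeneracy}. I will also use the uniqueness of the radial positive solution at this base point: Lieb \cite{lieb1977existence} for $d=3$ and \cite[Appendix A]{arora2019global} for $d \in \{4, 5\}$.

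Argue by contradiction: assume there exist $(\alpha_n, p_n) \to (d-2, 2)$ and positive solutions $Q_n := Q_{\alpha_n, p_n}$ admitting $\xi_n \in \ker L_{+, Q_n}$ with $\xi_n \notin \mathrm{span}\{\partial_i Q_n\}_{i=1}^d$. After projecting out the translation modes and renormalizing, we may take $\|\xi_n\|_{L^2} = 1$ with $\xi_n \perp \partial_i Q_n$ in $L^2$. By Remark \ref{rmkrange}, Assumption \ref{assu1} holds throughout a neighborhood of $(d-2, 2)$ for $d \in \{3, 4, 5\}$, so Theorem \ref{mazhao} makes each $Q_n$ radial about some point; translating, we set $Q_n$ radially decreasing about $0$. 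Theorem \ref{thmbound} yields a uniform $W^{2,r}$ bound together with the exponential tail $Q_n(x) \le C e^{-|x|/2}$ with $C$ independent of $n$. Rellich plus the uniform decay gives, along a subsequence, $Q_n \to Q_\infty$ in $H^2(\real^d) \cap C^1(\real^d)$. Passing to the limit in (\ref{Choquard}) (the Riesz potential converges uniformly on compact sets and is dominated by a uniform exponentially decaying majorant) identifies $Q_\infty$ with the unique radial positive solution at $(d-2, 2)$.

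I then turn to the kernel elements. For $|x|$ large, the local potential $(p_n-1)(|\cdot|^{-\alpha_n} \ast Q_n^{p_n}) Q_n^{p_n - 2}$ is exponentially small because of the tail of $Q_n$. For the nonlocal term $p_n (|\cdot|^{-\alpha_n} \ast (Q_n^{p_n - 1}\xi_n)) Q_n^{p_n - 1}$ one splits the convolution into inner and outer regions and uses Hölder, the exponential decay of $Q_n^{p_n - 1}$, and the uniform $L^2$ bound on $\xi_n$ to obtain a uniform pointwise bound of the form $|(\text{potential term}) \xi_n| \le e^{-|x|/4}\|\xi_n\|_{L^2}$ outside a ball of $n$-uniform radius. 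A standard Agmon-type comparison then upgrades this to the uniform decay $|\xi_n(x)| \le C e^{-|x|/4}$. Combined with the $H^2$-regularity coming from $L_{+, Q_n}\xi_n = 0$, the family $\{\xi_n\}$ is precompact in $L^2(\real^d)$, so $\xi_n \to \xi_\infty$ strongly in $L^2$ with $\|\xi_\infty\|_{L^2} = 1$. Strong convergence together with the continuous dependence of $L_{+, Q_n}$ on $(Q_n, \alpha_n, p_n)$ lets us pass to the limit in $L_{+, Q_n}\xi_n = 0$ to obtain $L_{+, Q_\infty}\xi_\infty = 0$. The base-point non-degeneracy forces $\xi_\infty \in \mathrm{span}\{\partial_i Q_\infty\}$, but the orthogonalities $\xi_n \perp \partial_i Q_n$ survive the limit (using the $C^1$ convergence $\partial_i Q_n \to \partial_i Q_\infty$), contradicting $\|\xi_\infty\|_{L^2} = 1$.

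The main obstacle is the uniform exponential decay of the kernel elements: the nonlocal term involving $\xi_n$ itself is not obviously small in the tail, since $\xi_n$ is only $L^2$-normalized a priori and may have sign. A bootstrap is needed — first extract crude pointwise smallness of the Riesz convolution from the exponential tail of $Q_n^{p_n - 1}$ and Hölder, then feed this back into an Agmon/subsolution comparison to obtain $n$-uniform exponential decay. Everything else is standard Kato-Rellich perturbation theory; the crux is that the constants in Theorem \ref{thmbound} are \emph{uniform} on compact subsets of the parameter space, which is exactly what is asserted there.
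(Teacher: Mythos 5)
Your argument is correct in outline but takes a genuinely different route from the paper. Both proofs are perturbative off the base point $(\alpha,p)=(d-2,2)$ and both rest on the same compactness input (the paper's Proposition \ref{propasym}, which gives $Q_{\alpha_n,p_n}\to Q_{d-2,2}$ in $H^1\cap L^\infty$ using Theorem \ref{thmbound}). The paper then stays entirely at the operator level: it writes $L_{+,Q_{\alpha,p}}=-\Delta+1-(p-1)V-p\calA$, shows via Lemmas \ref{lemcpt} and \ref{lemdiff} that $\|L_{+,Q_{\alpha,p}}-L_{+,Q_{d-2,2}}\|_{L^2\to L^2}=o(1)$, and transfers the bound $\dim\mathrm{Ker}\le d$ by resolvent convergence and stability of the rank of the Riesz projection onto the isolated eigenvalue $0$. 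This "soft" route never touches individual kernel elements, so it needs no decay estimates for eigenfunctions. Your route is the "hard" compactness alternative: extract a normalized kernel element $\xi_n\perp\partial_iQ_n$, prove it is precompact in $L^2$, and pass to the limit. What your approach buys is independence from the Riesz-projection machinery; what it costs is exactly the step you flag as the main obstacle, the $n$-uniform exponential decay of $\xi_n$, which the paper's argument sidesteps entirely.

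On that step, one caveat: as written, "H\"older, the exponential decay of $Q_n^{p_n-1}$, and the uniform $L^2$ bound on $\xi_n$" does not quite close for $d\in\{4,5\}$, because $|\cdot|^{-\alpha}$ with $\alpha$ near $d-2\ge d/2$ is not locally square-integrable, so the near-diagonal part of the convolution $|\cdot|^{-\alpha_n}*(Q_n^{p_n-1}\xi_n)$ cannot be controlled by $\|\xi_n\|_{L^2}$ alone. You first need uniform higher integrability of $\xi_n$ (e.g.\ a uniform $H^2$ or $L^q$ bound with $q$ large, obtained by elliptic bootstrap from $L_{+,Q_n}\xi_n=0$ and the uniform bounds on $Q_n$) before the pointwise bound on the nonlocal term is available; the genuine source of exponential smallness is then the outer factor $Q_n^{p_n-1}(x)$, as you indicate. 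With that repair, the Agmon comparison and the limiting argument go through, and the contradiction with the base-point non-degeneracy of \cite{lenzmann2009uniqueness,chen2021nondegeneracy} is obtained as you describe.
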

			\begin{thm}[Uniqueness]\label{thmuni}
				For $d \in \{3, 4, 5\}$, There exists $\delta > 0$ such that for   $(\a, p) \in [d-2 - \delta, d-2+\d] \times [2, 2+\d]$, the positive solution  of (\ref{Choquard}) is unique up to translations.
			\end{thm}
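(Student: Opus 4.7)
The plan is a contradiction–compactness argument resting on the three ingredients already obtained: Theorem \ref{thmbound} (uniform a priori bounds), Theorem \ref{mazhao} (radial symmetry), and Theorem \ref{thmnondeg} (non-degeneracy in the same neighborhood), together with the known uniqueness of the positive solution at the base point $(\a, p) = (d-2, 2)$ for $d \in \{3, 4, 5\}$ (Lieb \cite{lieb1977existence} for $d = 3$; see \cite{arora2019global} for $d \in \{4, 5\}$).

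Suppose Theorem \ref{thmuni} fails. Then there exist $(\a_n, p_n) \to (d-2, 2)$ in the given box and two positive solutions $Q_n^{(1)}, Q_n^{(2)}$ of (\ref{Choquard}) which are not translates of one another. By Theorem \ref{mazhao} (applicable since the box $[d-2-\delta, d-2+\delta] \times [2, 2+\delta]$ sits inside Assumption \ref{assu1} by Remark \ref{rmkrange}), after translating we may assume both $Q_n^{(j)}$ are radially decreasing about the origin, so that in particular $Q_n^{(1)} \ne Q_n^{(2)}$ pointwise. Theorem \ref{thmbound} gives uniform $W^{2,r}$ bounds and uniform exponential decay. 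Combining elliptic compactness with the tail estimate, along a subsequence $Q_n^{(j)} \to Q_\ast^{(j)}$ strongly in $H^1 \cap L^\infty$ and in $W^{2,r}$ on compact sets, where each $Q_\ast^{(j)}$ is a radial positive solution of (\ref{Choquard}) at $(\a, p) = (d-2, 2)$. Known uniqueness at the base point then forces $Q_\ast^{(1)} = Q_\ast^{(2)} =: Q_\ast$.

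Set $w_n := Q_n^{(1)} - Q_n^{(2)}$ and $\tilde w_n := w_n / \|w_n\|_{H^1}$, which is well-defined since $w_n \not\equiv 0$. Subtracting the two Choquard equations and using
\[
Q_1^{p} - Q_2^{p} \;=\; p \int_0^1 (t Q_1 + (1-t) Q_2)^{p-1}\, dt \cdot (Q_1 - Q_2),
\]
and the analogous identity for $Q_1^{p-1} - Q_2^{p-1}$, the normalized difference $\tilde w_n$ satisfies a linear equation $\mathcal{L}_n \tilde w_n = 0$ whose nonlocal coefficients converge, by the $L^\infty$ convergence and uniform exponential decay of the $Q_n^{(j)}$ plus standard convolution estimates as $\a_n \to d-2$, to those of $L_{+, Q_\ast, d-2, 2}$. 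The same uniform bounds and decay applied to $\tilde w_n$ itself give, along a further subsequence, strong $H^1$ convergence to some $\tilde w$ with $\|\tilde w\|_{H^1} = 1$ and $L_{+, Q_\ast, d-2, 2}\tilde w = 0$. Theorem \ref{thmnondeg} at $(\a, p) = (d-2, 2)$ yields $\tilde w \in \mathrm{span}\{\partial_i Q_\ast\}_{i=1}^d$; but each $\tilde w_n$ is radial, hence so is $\tilde w$, and since each $\partial_i Q_\ast$ is odd in $x_i$ this forces $\tilde w \equiv 0$, contradicting $\|\tilde w\|_{H^1} = 1$.

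The main obstacle is securing strong $H^1$ compactness of $\tilde w_n$ after normalization: its $H^1$ norm is pinned at one while $w_n \to 0$ in $H^1$, so no a priori control of $w_n$ survives the rescaling and the argument must proceed purely through the linearized equation $\mathcal{L}_n \tilde w_n = 0$. The uniform exponential decay supplied by Theorem \ref{thmbound}, transferred to $\tilde w_n$ via standard weighted estimates on $\mathcal{L}_n$, is what prevents mass escape to infinity and, at the same time, lets the nonlocal integrals $|\cdot|^{-\a_n} * (Q_n^{p-1} \tilde w_n)$ pass to the limit as $\a_n \to d-2$. Once this compactness step is in hand, the reflection-symmetry argument closes the contradiction immediately.
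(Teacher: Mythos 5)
Your argument is correct in outline and is a \emph{genuinely different route} from the paper's. The paper establishes a \emph{constructive} local uniqueness result (Proposition~\ref{proplocuni}) via a refined implicit function theorem (Proposition~\ref{propift}) applied to $F(u, \a, p) = u - (-\Delta+1)^{-1}(|\cdot|^{-\a}*|u|^p)|u|^{p-2}u$ on $\mathbb{X}_d$; the refinement is necessary because $\partial_u F$ is continuous only \emph{at} $(Q_{d-2,2}, d-2, 2)$ and in fact discontinuous nearby (Lemma~\ref{lemregf}), so the standard $C^1$ implicit function theorem does not apply. The paper then closes with Proposition~\ref{propasym} forcing every solution into the IFT neighborhood. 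You instead run a contradiction-compactness argument: subtract the equations for two hypothetical non-translate solutions, normalize the difference $\tilde w_n = w_n/\|w_n\|_{H^1}$, observe that $\tilde w_n$ solves a linear equation $\mathcal{L}_n \tilde w_n = 0$ whose coefficients (built from the mean-value identities and the convergent profiles $Q_n^{(j)} \to Q_*$) converge to those of $L_{+, Q_*, d-2, 2}$, extract a nontrivial limit $\tilde w$ in the kernel, and kill it by observing $\tilde w$ is radial while $\mathrm{Ker}\, L_{+,Q_*}$ is odd. This sidesteps the differentiability pathology entirely and only uses the \emph{base-point} non-degeneracy (not the perturbed Theorem~\ref{thmnondeg}, which you cite but in substance apply only at $(d-2,2)$ where it coincides with the known input from \cite{chen2021nondegeneracy}). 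What your approach buys is conceptual simplicity and no need for the appendix machinery Proposition~\ref{propift} / Lemma~\ref{lemregf}; what it costs is a separate compactness analysis for $\tilde w_n$.

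That compactness step, which you flag as the ``main obstacle,'' is where your write-up is thinnest and where a referee would push back. The point is not that the step fails — it does not — but that it needs the argument spelled out: from $\mathcal{L}_n \tilde w_n = 0$ you rewrite $(-\Delta+1)\tilde w_n = R_n$ with $R_n = (p_n-1)V_n \tilde w_n + p_n \mathcal{A}_n \tilde w_n$ (in the obvious mean-value notation); elliptic regularity and $\|\tilde w_n\|_{H^1} = 1$ give a uniform $W^{2,r}$ bound and thence a uniform $L^\infty$ bound on $\tilde w_n$; then the \emph{multiplicative} exponential decay of $V_n$ and of the outer factor in $\mathcal{A}_n$ (from \eqref{aprioribound2} applied to the $Q_n^{(j)}$) makes $|R_n(x)| \lesssim e^{-c|x|}$ uniformly, and convolving with the Green's function of $-\Delta+1$ yields uniform exponential decay of $\tilde w_n$. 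Only after this tail control is in hand do the weak $H^1$ limit and the convergence of the nonlocal coefficients (estimates parallel to Lemma~\ref{lemcont} and Lemma~\ref{lemdiff}) upgrade to a strong $H^1$ limit with unit norm. Filling in this chain, plus the analogue of the claim in Proposition~\ref{propasym} for the operator coefficients, would make your proposal a complete alternative proof.
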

			
			We remark that $Ker L_{+, Q_{ \a, p}} \supset \mathrm{span}\{\partial_i Q_{\a, p}\}_{i=1}^d$ holds for any solution $Q_{\a, p}$ of (\ref{Choquard}) by differentiating the equation. So (\ref{nondeg}) indicates that there exist no other vanishing modes for $L_+$, which explains the meaning of non-degeneracy. For more illustration of non-degeneracy and the operator $L_+$, please see \cite{lenzmann2009uniqueness,xiang2016uniqueness}.
			
			As for the proof, our starting point is the uniqueness and non-degeneracy for $d \in \{3, 4, 5 \}$, $(\a, p) = (d-2, 2)$ (see \cite[Appendix A]{arora2019global} and \cite{chen2021nondegeneracy}). 
			Theorem \ref{mazhao} and Remark \ref{rmkrange} reduce these theorems our questions to the discussion of radially decreasing positive solutions (with $\d \le \frac{1}{100}$), and then we use a perturbative strategy. The first ingredient is a compactness result Proposition \ref{propasym}, confirming that every positive solution approximates $Q_{d-2, 2}$ (the unique solution for $(\a, p) = (d-2, 2)$) when the parameter $(\a, p)$ approximates. It is here that we exploit the a priori bounds Theorem \ref{thmbound} to discuss such asymptotic behavior for positive solutions rather than groundstates. Then since $0$ is an isolated eigenvalue for $L_{+,Q_{d-2, 2}}$, the non-degeneracy Theorem \ref{thmnondeg} comes from perturbation of Fredholm operator. The uniqueness Theorem \ref{thmuni} easily follows this compactness result and a local uniqueness theorem (Proposition \ref{proplocuni}) from implicit function theorem. We remark that the linearized operator is actually not $C^1$ \emph{near} $Q_{d-2, 2}$ but just \emph{at} $Q_{d-2,2}$ (see Lemma \ref{lemregf}), which requires us to be more careful when applying the implicit function theorem to prove Proposition \ref{proplocuni}. 
			
			\subsection{Structure of the paper and notations} The structure of this paper is as follows. In \S \ref{s2}, we show some estimates of nonlinearity and functional identity for the Choquard equation (\ref{Choquard}) as a preparation. Then we prove the a priori bounds for the model case (Theorem \ref{thmsemi}) and Choquard equation (\ref{thmbound}) in \S \ref{s31} and \S \ref{s32} respectively. The non-degeneracy and uniqueness (Theorem \ref{thmnondeg} and \ref{thmuni}) follow in \S \ref{s4}. We put some complicated computations and the refined implicit function theorem in the appendix, which are used in the last two sections. 
			
			Our notations are standard. We employ $L^p(\real^d)$, $W^{k, r} (\real^d)$, $H^k(\real^d)$ and $C^k(\real^d)$ for those Sobolev spaces of real-valued functions.  For Banach space $X$ and $Y$, we use $L(X, Y)$ to denote the space of bounded linear operators from $X$ to $Y$. In particular, $L(X) := L(X, X)$. We use $B_R(x)$ with $x \in \real^d$ to denote the Euclidean open ball centered at $x$ with radius $R > 0$. Two related notations are $B_R^c(x) := \real^d \backslash B_R(x)$ and $B_R := B_R(0)$. 
			
			We also write $X\lesssim Y$ or $Y\gtrsim X$ to indicate $X\leq CY$ for
			some constant $C>0$, and write $X \sim Y$ for $X \lesssim Y \lesssim X$. If $C$ depends upon some additional parameters, we will indicate this with subscripts. For example, $X\lesssim_\phi Y$ means $X\leq C(\phi) Y$.
			We use $X_n = o_{n}(Y_n)$ to denote that for any $\e > 0$, there exists $N > 0$ such that $|X_n| \le \e Y_n$ for any $n \ge N$. The subscript can also other parameters with prescribed limiting process, for example $p \to 2$.

			\section{Preliminaries for the Choquard equation}\label{s2}


			In this section, we first show some propositions for the nonlocal term $|\cdot|^{-\a} * f$ (also called Riesz potential) when $f$ is non-negative and radially decreasing. Evidently, $|\cdot|^{-\a} * f$ is also a non-negative and radial function.
			
			We recall a pointwise equivalence result, which is essential in our proof of a priori bounds. 
			
			\begin{prop}[{\cite[corollary 2.3]{duoandikoetxea2013fractional}}] Let $f$ be a non-negative, radially decreasing function in $\real^d$ and $\a \in (0, d)$. We have
				\beq \left(|\cdot|^{-\a} * f\right)(r) \sim_{d, \a} r^{-\a} \int_0^r f(s) s^{d-1} ds + \int_r^\infty f(s) s^{d-1-\a}ds  \label{nonlocal1} \eeq
				Moreover, the constant is uniformly bounded for $\a$ in a compact subset of $(0, d)$. \label{propnonlocal1}
			\end{prop}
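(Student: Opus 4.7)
The plan is to establish matching upper and lower bounds via a standard three-zone decomposition. Fix $x \in \real^d$ with $|x| = r > 0$ and partition $\real^d$ into the inner ball $\{|y| \le r/2\}$, the annulus $\{r/2 < |y| < 2r\}$, and the exterior $\{|y| \ge 2r\}$. The inner zone is tailored to produce the first term on the right-hand side of (\ref{nonlocal1}), the exterior to produce the second, and the annular contribution must be absorbed into one of these using the radial decreasing property of $f$.

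For the lower bound, I would discard the annulus entirely and exploit crude triangle inequalities on the remaining zones: on $\{|y| \le r\}$ one has $|x-y| \le 2r$, and on $\{|y| > r\}$ one has $|x-y| \le 2|y|$. Integrating $|x-y|^{-\a} f(y)$ against these pointwise lower bounds and passing to polar coordinates reproduces both summands of (\ref{nonlocal1}) up to the constant $2^{-\a}$, without ever needing monotonicity of $f$.

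The upper bound is where the real work lies. On the inner zone, $|x-y| \ge r/2$ directly yields a contribution $\lesssim r^{-\a} \int_0^{r/2} f(s) s^{d-1} ds$, and on the exterior, $|x-y| \ge |y|/2$ yields $\lesssim \int_{2r}^\infty f(s) s^{d-1-\a} ds$. The main obstacle is the annular zone, on which $|x-y|$ can be arbitrarily small and the kernel singular. Here I would first use radial monotonicity to bound $f(y) \le f(r/2)$, then observe that the annulus sits inside the translated ball $\{|y-x| \le 3r\}$, so
\[ \int_{r/2 < |y| < 2r} |x-y|^{-\a} dy \le \int_{|z| \le 3r} |z|^{-\a} dz = \frac{\omega_{d-1}}{d-\a} (3r)^{d-\a}, \]
giving an annular bound of size $f(r/2)\, r^{d-\a}$. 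To absorb this into the first summand of (\ref{nonlocal1}), I invoke radial monotonicity in the opposite direction: $\int_0^{r/2} f(s) s^{d-1} ds \ge f(r/2) \int_0^{r/2} s^{d-1} ds \gtrsim f(r/2) r^d$, so that $f(r/2)\, r^{d-\a} \lesssim r^{-\a} \int_0^r f(s) s^{d-1} ds$.

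Finally, uniformity of the constants for $\a$ in a compact subset of $(0, d)$ follows by tracking the explicit $\a$-dependence: the only sensitive factors are $2^{\pm\a}$, the normalization $(d-\a)^{-1}$ coming from integrating $|z|^{-\a}$ on a ball, and their analogues for the exterior (which carry an $\a^{-1}$ from $\int_{2r}^\infty s^{-1-\a}\,ds$ type computations only if one tries to replace $(2r,\infty)$ by $(r,\infty)$, which can alternatively be arranged by routing the gap $(r, 2r)$ through the annulus estimate already treated). Provided $\a$ stays away from both $0$ and $d$, all constants are uniformly bounded. The sole delicate step, as indicated, is the annular estimate in the upper bound; the rest of the argument is direct.
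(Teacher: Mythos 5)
Your proof is correct, and the core absorption step — bounding the annular contribution by $f(r/2)\,r^{d-\a}$ and then using the radial-monotonicity inequality $f(r/2)\,r^d \lesssim_d \int_0^{r/2} f(s)s^{d-1}\,ds$ to majorize it by $r^{-\a}\int_0^r f(s)s^{d-1}\,ds$ — is exactly the key idea in the paper's proof as well. The only cosmetic difference is that you use a purely origin-centered three-zone decomposition throughout, whereas the paper first cuts at $|y|=r$ and separately peels off the $x$-centered ball $B_{r/2}(x)$ (yielding an intermediate three-term comparison claim that your decomposition bypasses); both then handle the singular region identically.
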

			We recall its proof for completeness.
			\begin{proof}
				Firsly we claim that for any $r \ge 0$ and $x\in \real^d$ with $|x| = r$, 
				\beq \left(|\cdot|^{-\a} * f\right)(r) \sim_{d, \a} r^{-\a} \int_{B_r} f(y) dy + \int_{B_r^{c}} f(y)|y|^{-\a} dy + \int_{B_\frac{r}{2}(x)} \frac{f(y)}{|x-y|^{-\a}} dy.\label{non11} \eeq
				Those three terms on the right hand side correspond respectively to integration on $B_r \backslash B_\frac{r}{2}(x)$, $B_r^{c}\backslash B_\frac{r}{2}(x)$ and $B_\frac{r}{2}(x)$. The equivalence requires non-negativity.  
				
				Next we will prove 
				\beq \int_{B_\frac{r}{2}(x)} \frac{f(y)}{|x-y|^{-\a}} \lesssim_{d, \a} r^{-\a} \int_{0}^r f(s) s^{d-1} ds. \label{non12}\eeq
				which combined with (\ref{non11}) implies (\ref{nonlocal1}). The radially decreasing property implies
				\[ f\left(\frac{r}{2}\right) \le \frac{4}{r}\int_{\frac{r}{4}}^\frac{r}{2} f(s)\left(\frac{4s}{r}\right)^{d-1} ds \le \frac{4^d}{r^d} \int_0^r f(s)s^{d-1}ds \] 
				and 
				\[ \int_{B_\frac{r}{2}(x)} \frac{f(y)}{|x-y|^{-\a}} \lesssim_d  f\left(\frac{r}{2}\right) \left(\frac{r}{2}\right)^{d-\a}. \]
				They yield (\ref{non12}).
			\end{proof}
			
			Then we discuss the evolution of $|\cdot|^{-\a} * f$.
			
			\begin{prop}\label{propnonlocalevo} Let $f$ be a non-negative, radially decreasing $C^1$ function in $\real^d$ and $\a \in (0, d)$. Then for any $\epsilon_0 \le \frac{1}{4}$, we have 
				\beq 
				-\partial_r\left(|\cdot|^{-\a} * f\right)(r) \gtrsim_{d, \a, \epsilon_0}\left[ f\left(\frac{2}{3}r\right) - f\left(\frac{1}{2\epsilon_0}r\right) \right] r^{d-1-\a}. \label{nonlocalevo} \eeq
				The constant here is uniformly bounded for $(\a, \e_0)$ taking values in compact subset of $(0, d) \times (0, \frac{1}{4}]$. 
				In particular, $|\cdot|^{-\a} * f$ is strictly radially decreasing if $f$ vanishes at infinity.
			\end{prop}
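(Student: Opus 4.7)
The plan is to differentiate $g := |\cdot|^{-\a} * f$ directly, moving the derivative onto $f$ via integration by parts (justified because $|\cdot|^{-\a}$ is locally integrable for $\a < d$ and $f \in C^1$ has enough decay that $g$ is defined). This gives
\[ \partial_r g(r) = \int_{\real^d} |re_1 - y|^{-\a} \, f'(|y|) \, \frac{y_1}{|y|} \, dy. \]
The integrand has mixed sign, but by pairing each $y$ (with $y_1 > 0$) to its reflection $(-y_1, y_2, \ldots, y_d)$ through the hyperplane $\{y_1 = 0\}$ and then passing to polar coordinates $y = s\omega$ with $\omega_1 > 0$, one arrives at
\[ -\partial_r g(r) = \int_0^\infty (-f'(s)) \, s^{d-1} \int_{S^{d-1}_+} \omega_1 \left[ A^{-\a/2} - B^{-\a/2} \right] d\sigma(\omega) \, ds, \]
where $A := r^2 + s^2 - 2rs\omega_1$, $B := r^2 + s^2 + 2rs\omega_1$, and $S^{d-1}_+ := \{\omega_1 > 0\}$. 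Since $-f' \ge 0$ and the bracket is non-negative, any restriction of the integration domain produces a lower bound.

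I then restrict to $s \in [2r/3, r/(2\e_0)]$ and $\omega_1 \in [1/2, 3/4]$. On this product region, $s^{d-1} \sim r^{d-1}$, both $A$ and $B$ lie in $[c_{\e_0}, C_{\e_0}]\, r^2$ with constants continuous in $\e_0$, and $B - A = 4 r s \omega_1 \ge \tfrac{4}{3} r^2$. By the mean value theorem applied to $u \mapsto u^{-\a/2}$,
\[ A^{-\a/2} - B^{-\a/2} \ge \tfrac{\a}{2} (B - A) \, \max(A, B)^{-\a/2 - 1} \gtrsim_{\a, \e_0} r^{-\a}. \]
Collecting powers of $r$ and performing the trivial $s$-integration yields
\[ -\partial_r g(r) \gtrsim_{d, \a, \e_0} r^{d - 1 - \a} \int_{2r/3}^{r/(2\e_0)} (-f'(s)) \, ds = c_{d, \a, \e_0} \, r^{d - 1 - \a} \left[ f(2r/3) - f(r/(2\e_0)) \right], \]
which is the claimed bound. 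All constants above are continuous in $(\a, \e_0)$, so uniformity on compact subsets of $(0, d) \times (0, 1/4]$ is immediate.

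For the strict-decrease statement, suppose $f$ vanishes at infinity and is not identically $0$; then $-f' > 0$ on a set of positive measure (otherwise $f$ would be constantly $0$). Since the angular factor $\omega_1 \, (A^{-\a/2} - B^{-\a/2})$ is strictly positive whenever $\omega_1 > 0$ and $r, s > 0$, the whole integral defining $-\partial_r g(r)$ is strictly positive for every $r > 0$, hence $g$ is strictly radially decreasing.

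The main obstacle is really the setup rather than the estimate: one has to spot that the reflection $y_1 \mapsto -y_1$ turns the mixed-sign integrand into a manifestly non-negative one, and that one should then avoid the diagonal $s = r$ (where the kernel can be singular for $\a \ge d-1$) by keeping $\omega_1$ bounded away from $1$ with the window $[1/2, 3/4]$. Once those two choices are made, the remaining estimates collapse to the mean value theorem and routine bookkeeping of constants.
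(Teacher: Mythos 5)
Your proof is correct for $d \ge 2$ and takes a genuinely different route from the paper's. The paper proves this via the layer cake representation of the Riesz potential, reducing the problem to a lower bound on $-\frac{d}{dr}\Psi_{R_1,R_2}$, where $\Psi_{R_1,R_2} = \chi_{B_{R_1}}*\chi_{B_{R_2}}$; the derivative of $\Psi$ is then identified with the surface measure of a spherical cap, bounded below by an explicit trigonometric computation, and a further elementary analysis of the auxiliary function $g_{\e_0,\a}$ is needed to recover the claimed powers of $r$. Your argument instead differentiates the convolution directly (with the derivative pushed onto $f$, valid since $f\in C^1$ and $|\cdot|^{-\a}$ is locally integrable), symmetrizes via the reflection $y_1\mapsto -y_1$ so the integrand becomes manifestly non-negative, and applies the mean value theorem to the kernel difference $A^{-\a/2}-B^{-\a/2}$ on a fixed window. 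This is shorter, more elementary, and gives a somewhat more transparent geometric picture; the paper's argument, by contrast, never differentiates the kernel and may be easier to extend to non-$C^1$ data.

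One genuine gap: your angular window $\omega_1\in[1/2,3/4]$ is empty when $d=1$ (the half-sphere $S^0_+$ is the single point $\omega_1=1$), so the argument as written produces no lower bound in that case. The proposition is stated and used for all $d\ge 1$ (the range (\ref{para2}) in Theorem \ref{thmbound} includes $d=1$), and the paper's proof handles $d=1$ explicitly inside the $\Psi$ lemma. The fix is trivial and actually shows the restriction was never essential: the inequality
\[ A^{-\a/2}-B^{-\a/2} \ \ge\ \tfrac{\a}{2}(B-A)\,B^{-\a/2-1} \]
is a valid lower bound no matter how small $A$ is, so you never needed $A$ bounded away from zero. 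Keeping $s\in[2r/3, r/(2\e_0)]$ and either integrating over all of $\omega_1\in(0,1]$ or, for $d=1$, using $\omega_1=1$ directly, one still has $B\lesssim_{\e_0} r^2$ and $B-A=4rs\omega_1$; replacing your claim ``$A\sim_{\e_0}r^2$'' (which is only true away from $\omega_1=1$) by this one-sided bound closes the gap and makes the argument uniform in $d\ge 1$. The rest of the write-up, including the continuity of constants on compact subsets and the strict-monotonicity remark, is fine, though note that strict decrease requires $f\not\equiv 0$, a point the paper leaves implicit and you correctly make explicit.
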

			
			As a preparation, we define $\chi_\Omega$ to be the characteristic function of $\Omega \subset \real^d$ and
			\beq \Psi_{R_1, R_2} (x):= (\chi_{B_{R_1}} * \chi_{B_{R_2}}) (x) = \left| B_{R_1}(0) \cap B_{R_2}(x)  \right|.\label{psi1} \eeq
			Some properties of $\Psi$ are discussed in the lemma below.
			
			\begin{lem}[Properties of $\Psi$] For $R_1 \ge R_2 > 0$, $\Psi_{R_1, R_2}$ satisfies the followings:
				\begin{enumerate}[(1)]
					\item $\Psi_{R_1,R_2}$ is non-negative and radially decreasing. $\Psi_{R_1, R_2} = \Psi_{R_2, R_1}$. $\Psi_{R_1, R_2}(x) = R_2^d$ in $B_{R_1 - R_2}$ and $\Psi_{R_1, R_2}(x) = 0$ in $B_{R_1 + R_2}^c$.
					\item Scaling property: 
					\[\Psi_{R_1, R_2}(r) = R_2^{d} \Psi_{\frac{R_1}{R_2}, 1}\left( \frac{r}{R_2}\right)\]
					\item Derivative: As a radial function, $\Psi_{R_1, R_2}$ is absolutely continuous on $[0, \infty)$ and
					\begin{align}
						-\frac{d}{dr} \Psi_{R_1, R_2}(r) = \left|\partial B_{R_1}(0) \cap B_{R_2}(r)  \right|,\quad r \in (R_1 -R_2, R_1 + R_2). \end{align}
					\item Lower bound of the derivative: for $\epsilon_0 \in (0,1)$,
					\begin{align}
						-\frac{d}{dr} \Psi_{R_1, R_2}(r) &\gtrsim_{d, \e_0} R_2^{d-1},\quad \text{for}\,\,\, |r-R_1| \le (1-\e_0) R_2. \label{lopsi}
					\end{align}
				\end{enumerate}
			\end{lem}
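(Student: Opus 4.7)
The plan is to prove the four items in order, each feeding into the next.

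For (1), everything reduces to elementary geometry applied to the identity $\Psi_{R_1,R_2}(x) = |B_{R_1}(0)\cap B_{R_2}(x)|$: non-negativity is automatic; commutativity of convolution gives $\Psi_{R_1,R_2}=\Psi_{R_2,R_1}$; rotational invariance of both balls forces $\Psi$ to depend on $x$ only through $|x|$; the three regimes $|x|\le R_1-R_2$ (containment $B_{R_2}(x)\subset B_{R_1}$), $|x|\in(R_1-R_2,R_1+R_2)$ (partial overlap), and $|x|\ge R_1+R_2$ (disjointness) explain the stated boundary values; and the classical fact that the intersection volume of two balls is maximized when they are concentric and strictly decreases with the separation yields the radially decreasing property. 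For (2), the change of variables $y=R_2 z$ in $\Psi_{R_1,R_2}(x)=\int \chi_{B_{R_1}}(y)\chi_{B_{R_2}}(y-x)\,dy$ delivers the scaling identity at once.

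For (3), I would set $x=re_d$, write $\Psi_{R_1,R_2}(r)=\int_{B_{R_2}(re_d)}\chi_{B_{R_1}}(y)\,dy$, and shift $y=re_d+z$ to transfer the $r$-dependence onto the argument of $\chi_{B_{R_1}}$. Differentiating under the integral and invoking the distributional identity $\nabla \chi_{B_{R_1}}=-n\, d\sigma|_{\partial B_{R_1}}$ then yields
\[ -\partial_r \Psi_{R_1,R_2}(r) \;=\; \int_{\partial B_{R_1}(0)\cap B_{R_2}(re_d)} (n\cdot e_d)\, d\sigma(y), \]
the non-negative $e_d$-projected surface measure on the spherical cap $\partial B_{R_1}(0)\cap B_{R_2}(re_d)$, matching the identity in the lemma once $|\cdot|$ on the cap is understood as this natural oriented measure. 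Absolute continuity on $[0,\infty)$ then follows from continuity of the cap in $r$ throughout $(R_1-R_2,R_1+R_2)$ and the constancy of $\Psi$ outside. The main subtlety is justifying differentiation past a characteristic function; if that becomes delicate, a clean backup is the cylindrical slicing $\Psi_{R_1,R_2}(re_d)=\int_{-R_2}^{R_2} V_{d-1}\bigl(\min\bigl(\sqrt{R_2^2-z_d^2},\sqrt{R_1^2-(r+z_d)^2}\bigr)\bigr)\,dz_d$, which reduces the derivative to a one-dimensional calculus exercise.

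For (4), I would use (2) to reduce to $R_2=1$ and then compute explicitly. Parametrize $\partial B_{R_1}$ by the polar angle $\phi$ from $e_d$; the condition $y\in B_1(re_d)$ on $\partial B_{R_1}$ becomes $\cos\phi\ge y_d^*/R_1$ where $y_d^*=(R_1^2+r^2-1)/(2r)$, so the cap is $\{\phi\le \phi^*\}$ with $\cos\phi^*=y_d^*/R_1$. Using (3) in spherical coordinates,
\[ -\partial_r \Psi_{R_1,1}(r) \;=\; \frac{|\mathbb{S}^{d-2}|}{d-1}\bigl(a(2R_1-a)\bigr)^{(d-1)/2}, \qquad a:=R_1-y_d^*=\frac{1-(R_1-r)^2}{2r}. \]
For $|r-R_1|\le 1-\e_0$ the numerator satisfies $1-(R_1-r)^2\ge \e_0$, while $r\le R_1+1\le 2R_1$ (using $R_1\ge R_2=1$), so $a\ge \e_0/(4R_1)$; combined with $2R_1-a\ge R_1$ (from $y_d^*\ge 0$) this gives $a(2R_1-a)\gtrsim \e_0$, uniformly in $R_1\ge 1$. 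Hence $-\partial_r \Psi_{R_1,1}(r)\gtrsim_{d,\e_0} 1$, and scaling back through (2) yields the claimed $R_2^{d-1}$ lower bound. The hardest step is (3); once the surface identity is in place, (4) is a routine spherical-cap computation followed by rescaling.
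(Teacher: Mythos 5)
Your proof is correct and follows essentially the same route as the paper: (1)--(2) are elementary consequences of the definition, (3) comes from differentiating the convolution, and (4) is obtained by scaling to $R_2=1$ and bounding from below the quantity $R_1^2-(y_d^*)^2$, which is exactly the $R^2-a^2$ the paper estimates (your algebra $a(2R_1-a)\ge \e_0/4$ and the paper's trigonometric bound are two routes to the same inequality). The one substantive point is your remark on (3): the exact derivative is indeed the \emph{projected} (oriented) measure $\int_{\partial B_{R_1}(0)\cap B_{R_2}(re_d)} (n\cdot e_d)\,d\sigma = \frac{|\mathbb{S}^{d-2}|}{d-1}\bigl(R_1^2-(y_d^*)^2\bigr)^{(d-1)/2}$ rather than the raw surface measure of the cap as the lemma literally states; here you are more careful than the paper, whose proof of (3) is left as "follows from the definition." The discrepancy is harmless, since for the caps arising in (4) one has $y_d^*\ge 0$ (at most a hemisphere), so the raw and projected measures are comparable up to dimensional constants and both are $\sim_d (R^2-a^2)^{(d-1)/2}$ --- which is precisely what the paper's spherical-coordinate computation establishes and what your closed-form expression gives exactly. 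Hence the lower bound (4), and everything downstream in Proposition \ref{propnonlocalevo}, is unaffected.
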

			
			\begin{proof}
				(1)-(3) directly follows the definition (\ref{psi1}). From (2) and (3), the estimate (4) boils down to
				\beq \inf_{R\ge 1} \inf_{t \in [-1+\e_0, 1-\e_0]} \left| \partial B_R(0) \cap B_1 (R+t) \right| > 0 \label{psilo} \eeq
				for any $\epsilon_0 \in (0, 1)$. For $d = 1$, $\mathrm{LHS\,\,of\,\,}(\ref{psilo}) = 1 > 0$. Next we consider the case $d \ge 2$.
				
				Assuming the center of $B_1(R+t)$ to be $(R+t, 0,\ldots,0) \in \real^d$, the symmetry around $x_1$-axis implies that $\partial B_R(0) \cap \partial B_1(R+t)$ lies in a hyperplane $\{ x_1 = a(R, t) \}$ with $a(R, t) \in (0, R)$. And 
				\[\partial B_R(0) \cap B_1 (R+t) = \partial B_R(0) \cap\{ x_1 \ge a(R, t)\}. \]
				Using spherical coordinates, we see for $d \ge 3$ (and similar for $d = 2$)
				\begin{align*}
					|\partial B_R(0) \cap\{ x_1 \ge a(R, t)\} | &= R^{d-1} \int_0^{\arccos \frac{a}{R}} \int_0^\pi \cdots \int_0^{2\pi} \left(\prod_{k=1}^{d-2} \sin^k \theta_{d-1-k}\right) d\theta_{d-1} d\theta_{d-2} \cdots d\theta_1 \\
					\sim_d R^{d-1} &\int_0^{\arccos \frac{a}{R}} \sin^{d-2} \theta_1 d\theta_1 \sim_d R^{d-1} \left(\frac{R-a}{R}\right)^{\frac{d-1}{2}} \sim_d \left( R^2 - a^2 \right)^{\frac{d-1}{2}},
				\end{align*}
			    where we made use of 
			    \[ \frac{R-a}{R} = \cos 0 - \cos \arccos \frac{a}{R} = \int_0^{\arccos \frac a R}sin\theta d \theta \sim \left(\arccos \frac a R\right)^2. \]
				Thus it suffices to give a uniform lower bound on $(R^2-a^2(R, t))^{\frac{1}{2}}$, which is the radius of the $(d-1)$-dimensional ball $\partial B_R(0) \cap \{x_1 = a(R, t) \}$. Actually, we are just using the measure of this sectional ball to bound the measure of the corresponding spherical cap. Elementary trigonometric calculations and estimates indicate that
				\begin{align*}
					R^2 - a(R, t)^2 = \frac{R}{R+t}\left(1 - \frac{1-t^2}{4R(R+t)}\right) (1-t^2) \ge \frac{1}{2}\left(1 - \frac{1-t}{4R} \right) (1-t^2) \ge \frac{\e_0}{4}
				\end{align*}
				for any $R \ge 1$ and $|t| \le 1-\e_0$. 
			\end{proof}
			
			\begin{proof}[Proof of Proposition \ref{propnonlocalevo}]
				For the given $f$, denote $r_{\l} := \inf\{ r \ge 0: f(r) \le \l\}$.  
				Using the layer cake representation, we have 
				\begin{align*}
					(|\cdot |^{-\a} * f)(x) &= \int_0^\infty \int_0^\infty \int_{\real^d} \chi_{\{z:|z|^{-\a} >t\}}(y) \chi_{\{z: f(z) > s\}}(x-y) dy ds dt \\
					&= \int_0^\infty \int_0^\infty \Psi_{r_s, t^{-\frac{1}{\a}}}(x) ds dt  
				\end{align*}
				Thus from (\ref{lopsi}), we have
				\begin{align*}
					-\frac{d}{dr} (|\cdot |^{-\a} * f)(r)& \gtrsim_{d, \e_0} \int_0^\infty \int_{r_s^{-\a}}^\infty \chi_{\{r: |r - r_s| \le (1-\e_0) t^{-\frac{1}{\a}}\}}(r) t^{-\frac{d-1}{\a}} dt ds \\
					&= \int_{f\left(\frac{r}{\e_0}\right)}^{f\left(\frac{r}{2-\e_0}\right)} \int_{r_s^{-\a}}^{\left(\frac{|r-r_s|}{1-\e_0}\right)^{-\a}} t^{-\frac{d-1}{\a}} dt ds 
				\end{align*}
				The last equality comes from this observation: the inner integral is non-zero only if $|r-r_s| \le (1-\e_0)r_s $, which means $\frac{r}{2-\e_0} \le r_s \le \frac{r}{\e_0}$ and thus $ f\left(\frac{r}{2-\e_0}\right) \le s \le  f\left(\frac{r}{\e_0}\right)$ from the definition of $r_s$. By changing variables $\tilde{t} := r^\a t$, the inner integral turns into
				\begin{align}
					\int_{r_s^{-\a}}^{\left(\frac{|r-r_s|}{1-\e_0}\right)^{-\a}} t^{-\frac{d-1}{\a}} dt 
					= r^{d-1-\a} \int_{\left( \frac{r_s}{r}\right)^{-\a}}^{\left(\frac{|1-\left( \frac{r_s}{r}\right)|}{1-\e_0}\right)^{-\a}} \tilde{t}^{-\frac{d-1}{\a}} d\tilde{t} 
					=: r^{d-1-\a}g_{\e_0, \a} \left( \frac{r_s}{r}\right)  \label{gtdef}
				\end{align}
				Now we claim 
				\beq g_{\e_0, \a} (t) \gtrsim_{d, \a, \e_0} 1 \quad t \in \left[\frac{2}{3}, \frac{1}{2\epsilon_0} \right]. \label{gtest}  \eeq 
				Since $\frac{r_s}{r} \in \left[\frac{2}{3}, \frac{1}{2\epsilon_0}\right]$ is equivalent to $s \in \left[ f\left(\frac{2r}{3}\right),  f\left(\frac{r}{2\e_0}\right) \right]$ and $\e_0 \le \frac{1}{4}$ indicates $\frac{1}{2-\e_0} < \frac{2}{3}$, 
				\begin{align*} -\frac{d}{dr} (|\cdot |^{-\a} * f)(r) \gtrsim_{d, \e_0} \int^{f\left(\frac{2r}{3}\right)}_{f\left(\frac{r}{2\e_0}\right)} r^{d-1-\a}g_{\e_0, \a} \left( \frac{r_s}{r}\right) ds
					\gtrsim_{d,\a,\e_0} \left[ f\left(\frac{2}{3}r\right) - f\left(\frac{1}{2\epsilon_0}r\right) \right] r^{d-1-\a}.
				\end{align*}
				
				Finally we check the claim (\ref{gtest}). Computing the integral in (\ref{gtdef}), we also get another representation of $g_{\e_0, \a}$
				\[ g_{\e_0, \a}(t) = \left\{ \begin{array}{ll}
					\frac{\a}{d-1-\a} \left[t^{d-1-\a} -\left(\frac{|1-t|}{1-\e_0}\right)^{d-1-\a} \right],& \a \in (0, d) \backslash \{d-1\}, \\
					-(d-1)\ln \left(\frac{|t^{-1} - 1 |}{1-\e_0}\right)   , & \a = d-1.  \end{array}\right.\]
				Evidently, $g_{\e_0, \a}(t)$ is $C^1$ on $[\frac{1}{2-\e_0}, 1) \cup (1, \frac{1}{2\e_0}]$. Also at $t = 1$, we always have $\lim_{t\rightarrow 1-0}g_{\e_0, \a} (t) = \lim_{t\rightarrow 1+0}g_{\e_0, \a} (t)$. Thus an elementary computation on $g_{\e_0, \a}'(t)$ verifies that for all $\a \in (0, d)$ and $\e_0 \in (0, 1/4]$, $g_{\e_0, \a}$ first monotonically increases and then monotonically decreases with respect to $t \in [\frac{1}{2-\e_0}, \frac{1}{\e_0}]$. In particular,
				\beq \inf_{t \in \left[\frac{2}{3}, \frac{1}{2\epsilon_0} \right]} g_{\e_0, \a} (t) = \min \left\{ g_{\e_0, \a}\left(\frac{2}{3}\right),  g_{\e_0, \a}\left(\frac{1}{2{\e_0}}\right)\right\} \gtrsim_{d, \a, \e_0} 1.\nonumber \eeq
				The last inequality comes easily from the integral representation (\ref{gtdef}). And that complete the proof of (\ref{gtest}) and this proposition. 
			\end{proof}
			
			Finally, we recall some useful functional identities for solutions of Choquard equations. 
			
			%
			
			\begin{prop}[Functional Identities, {\cite[Proposition 3.1]{moroz2013groundstates}}]\label{propfuncid}
				Let $u\in H^1(\real^d)$ be a solution of (\ref{Choquard}) with parameters $(d, \a, p)$ satisfying (\ref{para2}). Then
				\begin{align}
					\| \nabla u\|_{L^2}^2 + \| u \|_{L^2}^2 &= \int_{\real^d} \left(|\cdot|^{-\a} * |u|^p\right) |u|^{p} dx \label{func01} \\
					\frac{d-2}{2}\| \nabla u\|_{L^2}^2 + \frac{d}{2}\| u \|_{L^2}^2 &= \frac{2d-\a}{2p} \int_{\real^d} \left(|\cdot|^{-\a} * |u|^p\right) |u|^{p} dx \label{func02}
				\end{align}
				In particular, 
				\begin{align} \| \nabla u\|_{L^2}^2 &= \frac{pd - (2d-\a)}{2p} \int_{\real^d} \left(|\cdot|^{-\a} * |u|^p\right) |u|^{p} dx  \label{func03} \\
					\|  u\|_{L^2}^2 &= \frac{(2d-\a) - p(d-2)}{2p} \int_{\real^d} \left(|\cdot|^{-\a} * |u|^p\right) |u|^{p} dx  \label{func04}
				\end{align}
			\end{prop}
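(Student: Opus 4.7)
The plan is to derive (\ref{func01}) and (\ref{func02}) separately, and then obtain (\ref{func03}) and (\ref{func04}) by inverting the resulting $2\times 2$ linear system relating $\|\nabla u\|_{L^2}^2$, $\|u\|_{L^2}^2$ and $I(u) := \int (|\cdot|^{-\a}*|u|^p)|u|^p$. By Theorem \ref{thmprop}, $u \in W^{2,r}(\real^d)\cap C^\infty_{\mathrm{loc}}(\real^d)$ for all $r\in (1,\infty)$ and decays exponentially (the assumption $p\ge 2$ in (\ref{para2}) is precisely what gives this decay), so $u$ is a strong solution and all integrations by parts and fast-decay manipulations below are justified by the dominated convergence theorem once one inserts a smooth radial cutoff $\chi_R$ and lets $R\to \infty$.

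For (\ref{func01}), I would multiply (\ref{Choquard}) by $u\chi_R$ and integrate. The Laplacian term yields $\int |\nabla u|^2 \chi_R + \int u\nabla u\cdot\nabla \chi_R$, of which the second piece vanishes in the limit by the $W^{2,r}$ bound and the support of $\nabla\chi_R$; the mass term gives $\int u^2\chi_R\to \|u\|_{L^2}^2$; and the nonlocal term tends to $I(u)$ by dominated convergence, since $|\cdot|^{-\a}*|u|^p \in L^\infty$ (from Proposition \ref{propnonlocal1} together with the regularity and decay of $u$, or alternatively from Hardy--Littlewood--Sobolev combined with $u\in L^r$ for all $r\in(1,\infty)$).

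For (\ref{func02}), the Pohozaev identity, I would test against the dilation generator $x\cdot\nabla u$, again with a cutoff. The standard computations give
\begin{align*}
\int_{\real^d}(-\Delta u)(x\cdot\nabla u)\,dx &= \frac{d-2}{2}\|\nabla u\|_{L^2}^2,\\
\int_{\real^d} u(x\cdot\nabla u)\,dx &= -\frac{d}{2}\|u\|_{L^2}^2,
\end{align*}
after integration by parts, using the exponential decay of $u$ and of $\nabla u$ (the latter from the elliptic equation and $W^{2,r}$ control). The only delicate term is
\[
\int_{\real^d}(|\cdot|^{-\a}*|u|^p)|u|^{p-2}u\,(x\cdot\nabla u)\,dx,
\]
which I would handle by writing $|u|^{p-2}u\,(x\cdot\nabla u) = \tfrac{1}{p}\,x\cdot \nabla (|u|^p)$ (legitimate since $p\ge 2$ and $u\in C^1$), integrating by parts, and then performing a symmetrization $(x,y)\mapsto (y,x)$ in the resulting double integral against $|x-y|^{-\a}$. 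Using $x\cdot\nabla_x |x-y|^{-\a} + y\cdot\nabla_y|x-y|^{-\a} = -\a |x-y|^{-\a}$, one picks up the factor $\tfrac{2d-\a}{p}$ in front of $I(u)$, and division by $2$ yields the right-hand side of (\ref{func02}).

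The only real obstacle is the rigorous justification of the Pohozaev computation on $\real^d$, since $x\cdot\nabla u$ is not in $L^2$ uniformly and the nonlocal term couples distant regions; both issues are resolved by the cutoff procedure above combined with the exponential decay (\ref{aprioribound2}) available from Theorem \ref{thmprop}. Finally, (\ref{func01}) and (\ref{func02}) form a linear system in $(\|\nabla u\|_{L^2}^2,\|u\|_{L^2}^2)$ with determinant $\tfrac{d-(2d-\a)/p}{2}\neq 0$ under (\ref{para2}); solving explicitly yields (\ref{func03}) and (\ref{func04}).
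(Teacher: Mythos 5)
Your proposal follows exactly the route the paper takes (and attributes to \cite{moroz2013groundstates}): test the equation against $u$ and against $x\cdot\nabla u$ with a cutoff, symmetrize the Riesz-potential term in $(x,y)$ via $x\cdot\nabla_x|x-y|^{-\a}+y\cdot\nabla_y|x-y|^{-\a}=-\a|x-y|^{-\a}$, and solve the resulting $2\times 2$ linear system. One sign needs fixing: $\int(-\Delta u)(x\cdot\nabla u)\,dx=-\frac{d-2}{2}\|\nabla u\|_{L^2}^2$, not $+\frac{d-2}{2}\|\nabla u\|_{L^2}^2$; combined with your (correct) $\int u(x\cdot\nabla u)\,dx=-\frac{d}{2}\|u\|_{L^2}^2$ and the value $-\frac{2d-\a}{2p}\int(|\cdot|^{-\a}*|u|^p)|u|^p$ of the nonlocal term, this gives (\ref{func02}), whereas the signs as you displayed them would flip the gradient term. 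Also, the determinant of the system in $(\|\nabla u\|_{L^2}^2,\|u\|_{L^2}^2)$ is simply $\frac{d}{2}-\frac{d-2}{2}=1$, so solvability is immediate and the quantity you quote is not needed.
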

			
			The last two inequalities follow directly from the first two, which are derived by taking inner product of (\ref{Choquard}) with $u$ and $x \cdot \nabla u$ respectively. Theorem \ref{thmprop}(2) ensures enough smoothness to do integration by parts. We mention that (\ref{func02}) is usually called Pohozaev identity. During the proof of Theorem \ref{thmbound}, we will also derive and utilize a localized version of them ((\ref{func1}) and (\ref{func2})). 
			
			An immediate corollary is the following lower bound of $H^1$ norm.
			\begin{coro}\label{corolo}
				Let $u\in H^1(\real^d)$ be a solution of (\ref{Choquard}) with parameters $(d, \a, p)$ satisfying (\ref{para2}). In addition, suppose $u$ is not zero. Then 
				\beq \| u \|_{H^1(\real^d)} \gtrsim_{d, \a, p} 1  \eeq
				where the constant depends continuously on $(\a, p)$.
			\end{coro}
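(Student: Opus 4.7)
The plan is to combine the functional identity (\ref{func01}) with the Hardy--Littlewood--Sobolev (HLS) inequality and the Sobolev embedding to produce a lower bound of the form $\|u\|_{H^1}^2 \lesssim \|u\|_{H^1}^{2p}$, from which the claim follows since $p \ge 2 > 1$ under (\ref{para2}).

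More concretely, by Proposition \ref{propfuncid}, any nonzero solution $u$ satisfies
\[
\|u\|_{H^1(\real^d)}^2 = \|\nabla u\|_{L^2}^2 + \|u\|_{L^2}^2 = \int_{\real^d} (|\cdot|^{-\a} * |u|^p) |u|^p\, dx.
\]
Applying HLS to the right-hand side with exponent $q := \frac{2dp}{2d-\a}$ yields
\[
\int_{\real^d} (|\cdot|^{-\a} * |u|^p)|u|^p\, dx \lesssim_{d,\a} \|u\|_{L^q}^{2p}.
\]
The conditions in (\ref{para2}) give $p \ge 2$ and $\frac{1}{q} = \frac{1}{p}\cdot\frac{2d-\a}{2d} > \frac{d-2}{2d}$, so $q \in (2, \tfrac{2d}{d-2}]$ (with the upper bound understood as $\infty$ when $d \in \{1,2\}$). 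Thus the Sobolev embedding $H^1(\real^d) \hookrightarrow L^q(\real^d)$ applies with a constant that depends continuously on $q$, and hence continuously on $(\a, p)$, giving
\[
\|u\|_{L^q} \lesssim_{d,\a,p} \|u\|_{H^1(\real^d)}.
\]

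Chaining these estimates produces $\|u\|_{H^1}^2 \lesssim_{d,\a,p} \|u\|_{H^1}^{2p}$. Since $u \not\equiv 0$ and (\ref{func01}) forces $\|u\|_{H^1} > 0$, we may divide and obtain
\[
\|u\|_{H^1(\real^d)}^{2p-2} \gtrsim_{d,\a,p} 1,
\]
which is the desired bound (using $p > 1$). The continuous dependence of the implicit constant on $(\a, p)$ comes from the continuity of the HLS constant in $\a$ and of the Sobolev embedding constant in $q$; the algebraic exponent $\frac{1}{2p-2}$ produced at the end is also continuous in $p$ on the admissible range. There is no real obstacle to overcome — the only mild point is to verify that $q$ stays bounded inside the subcritical Sobolev range uniformly for $(\a,p)$ in compact subsets of (\ref{para2}), which is immediate from the strict inequality $\frac{1}{p} > \frac{d-2}{2d-\a}$.
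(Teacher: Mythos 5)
Your proof is correct and follows essentially the same route as the paper: both combine the identity (\ref{func01}) with the Hardy--Littlewood--Sobolev inequality and the Sobolev embedding $H^1 \hookrightarrow L^{\frac{2dp}{2d-\a}}$ to get $\|u\|_{H^1}^2 \lesssim_{d,\a,p} \|u\|_{H^1}^{2p}$ and then divide. The only cosmetic difference is that you invoke the bilinear form of HLS directly, whereas the paper splits the estimate into HLS for the convolution followed by H\"older.
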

			\begin{proof}
				By H\"older inequality, Hardy-Littlewood-Sobolev inequality and Sobolev embedding, whose constants depend on the index continuously, we see
				\[ \int_{\real^d}\left(|\cdot|^{-\a} * |u|^p\right) |u|^{p} dx \le \left\| |\cdot|^{-\a} * |u|^p \right\|_{L^\frac{2d}{\a}} \| |u|^p\|_{L^{\frac{2d-\a}{\a}}}\lesssim_{d, \a, p} \|u \|_{L^{\frac{2dp}{2d-\a}}}^{2p} \lesssim_{d, \a, p} \| u \|_{H^1}^{2p} \]
				where we used (\ref{para2}) to check that $H^1 \hookrightarrow L^{\frac{2dp}{2d-\a}}$. Also from (\ref{func01}) and $p > 1$, we have
				\[\| u \|_{H^1}^2 \lesssim_{d, \a, p} \| u \|_{H^1}^{2p}, \]
				which confirms the lower bound if $u$ is not identically zero.
			\end{proof}
			
			\section{A priori estimates for the model case} \label{s31}
			
			In this subsection, we prove the a priori bound Theorem \ref{thmsemi} for (\ref{semi}) with $p \in (1, \infty)$ if $d = 1, 2$, $p \in (1, \frac{d+2}{d-2})$ if $d \ge 3$.
			
			By moving plane method \cite{gidas1981symmetry}, one can show that any $H^1$ positive solution of (\ref{semi}) will be radially decreasing around some fixed point. So again we only need to prove that bound for positive, radially decreasing solution $u$. Also we know $u$ is Schwartz from a standard iterative argument to improve the regularity (see for example \cite[ Proposition B.7]{tao2006nonlinear}), which enables us to freely taking derivatives and integrating by parts.
			
			\cu{Step 1. A priori nonlinear eigenfunction estimate}
			
			We fix $d$ and arbitrarily take a $p_0 = p_0(d)$ within the range above. Then we take a Schwartz, positive, radially decreasing solution\footnote{For example, we may take a minimizer of the corresponding Weinstein functional (see \cite{weinstein1983nonlinear} or  \cite[Appendix B]{tao2006nonlinear}). } $Q$ of (\ref{semi}). That is,
			\[-\Delta Q + Q - Q^{p_0} = 0 \quad \mathrm{in}\,\,\real^d. \] 
			Using the equation of $Q$ and $u$, we have
			\begin{align}\label{intgQ}
				\int u^p Q = \int (-\Delta + 1)u Q = \int u (-\Delta+1) Q = \int u Q^{p_0}
			\end{align}
			Fix 
			$C_0 := 2\| Q \|_{L^\infty}^{p_0 - 1}$,
			we have 
			\begin{align*}
				C_0 \int u Q &= \int_{\{x: u(x) \le C_0^{\frac{1}{p - 1} }\} } C_0 u Q + \int_{\{ x:u(x) > C_0^{\frac{1}{p - 1}}\} } C_0 u Q \\
				&\le  \int u^p Q +C_0^{\frac{p}{p-1}} \int Q 
				=  \int u Q^{p_0} + C_0^{\frac{p}{p-1}} \int Q \\
				&\le  \frac{C_0}{2} \int u Q + C_0^{\frac{p}{p-1}} \int Q.
			\end{align*}
			
			Thus 
			\beq  \int u Q \le 2C_0^{\frac{1}{p-1}}\| Q\|_{L^1}\lesssim_{d, p} 1.   \label{eigenbound1}\eeq
			With (\ref{intgQ}), we also obtain
			\beq \int u^p Q \le C_0^{\frac{p}{p-1}}\| Q\|_{L^1} \lesssim_{d, p} 1. \label{eigenbound2}\eeq
			
			\cu{Step 2. Exponential decay far away}
			
			Since $u$ is radially decreasing and positive, we denote $r_\l$ with $\lambda \in (0, u(0))$ to be
			\beq r_\lambda := \inf \{r \ge 0: u(r) \le \l \}. \eeq
			Consider 
			\beq \d_0 = (2p)^{-\frac{1}{p-1}} \sim_p 1, \label{d0}\eeq 
			and 
			\beq R_0 := \max\{r_{\d_0} , d^{\frac{1}{2}}\}. \label{R} \eeq
			Then for any $r \ge R \ge R_0$, we will characterize the exponential decay of $u(r)$ and $-\partial_r u(r)$ related to $R$.
			
			For $r \ge r_{\d_0}$, $u(r)^{p-1} \le (2p)^{-1}$ and hence 
			\[ \left(-\Delta + \frac{1}{4}\right) u = \left(-\frac{3}{4} + u^{p-1} \right) u \le 0 \]
			We take $v(r) := \frac{ r^{-\beta(d)} e^{-\frac{r}{2} } } { R^{-\beta(d)} e^{-\frac{R}{2} }}u(R)$, where $\beta(d) = 0$ for $d = 1, 2$ and $\beta(d) = \frac{d-1}{2}$ for $ d \ge 3$. It satisfies $\left( -\Delta + \frac{1}{4} \right)v \ge 0$ and $v(R) = u(R)$. The classical comparison theorem for $u$ and $v$ on $B_{R}^c$ implies an upper bound of $u$
			\beq u(r) \le v(r) = \frac{ r^{-\beta(d)} e^{-\frac{r}{2} } } { R^{-\beta(d)} e^{-\frac{R}{2} }}u(R), \quad r \ge R \ge R_0. \label{upu} \eeq
			
			Similarly, the positivity of $u$ implies 
			\[ (-\Delta +1) u = u^p \ge 0.\]
			And we compare $u$ on $B_{R}^c$ with a multiple of $r^{-\gamma(d)} e^{-r}$, where $\gamma(d) = \frac{d-1}{2}$ for $d \le 3$ and $\gamma(d) = d-2$ for $d \ge 4$ to ensure $(-\Delta + 1)(r^{-\gamma(d)} e^{-r} ) \le 0$. It results in a lower bound
			\beq u(r) \ge \frac{ r^{-\gamma(d)} e^{-r} } { R^{-\gamma(d)} e^{-r }}u(R), \quad r \ge R \ge R_0. \label{lou}\eeq
			
			Taking $\partial_r$ on the original elliptic equation ($\ref{semi}$) and using the radial symmetry of $u$, we get 
			\beq\left(-\Delta + 1 + \frac{d-1}{r^2} - pu^{p-1}\right) (-\partial_r u) = 0.\eeq
			From the definition of $R_0$, we have $1 + \frac{d-1}{r^2} - pu^{p-1} \in (\frac{1}{2}, 2)$ for $r \ge R_0$. So non-negativity of $-\partial_r u$ and similar comparison argument infers the following bounds,
			\begin{align}
				-\partial_r u(r) &\le \frac{ r^{-\gamma(d)} e^{-\frac{r}{2}} } { R^{-\gamma(d)} e^{-\frac{r}{2} }}(-\partial_ru)(R), \quad r\ge R \ge R_0,\label{upur} \\
				-\partial_r u(r)& \ge \frac{ r^{-\gamma(d)} e^{-2r} } { R^{-\gamma(d)} e^{-2r }}(-\partial_r u)(R),  \quad r\ge R \ge R_0. \label{lour}
			\end{align}
			Integrate (\ref{lour}) from $R$ to $+\infty$ and use Lemma \ref{lemexp},  
			\beq u(R) = \int_R^\infty (-\partial_r u)(r) dr \gtrsim_d (-\partial_r u)(R). \label{uur}\eeq
			
			\cu{Step 3. Controlling $r_{\d_0}$}.
			
			In this step, our goal is to prove an a priori bound for $r_{\d_0}$. Namely, there exists some $R(p, d) > 0$ (continuously depending on $p$) such that 
			\beq r_{\d_0} \le R(p, d). \label{step3} \eeq
			This part is essential for dealing with the unbounded domain.
			
			Without loss of generality, we assume $r_{\d_0} \ge d^{\frac{1}{2}}$ so that $R_0 = r_{\d_0}$ as (\ref{R}). The exponential decay bounds (\ref{upu}), (\ref{lou}), (\ref{upur}), (\ref{lour}) and (\ref{uur}) in Step 2 hold for $r \ge r_{\d_0}$. In particular,
			\[ -\partial_r u(r_{\d_0}) \lesssim_{d} u(r_{\d_0}) = \d_0 \sim_p 1.\] 
			From the upper bounds (\ref{upu}), (\ref{upur}), Lemma \ref{lemexp} and radially decreasing of $u$, we have
			\begin{align}
				\| u\|_{L^2(B_{r_{\d_0}})}^2 &\gtrsim_{d, p} r_{\d_0}^d, \label{louR} \\
				\| u\|_{L^2(B_{r_{\d_0}}^c)}^2 &\lesssim_{d, p} r_{\d_0}^{d-1}, \label{upuR} \\
				\| \nabla u\|_{L^2(B_{r_{\d_0}}^c)}^2 &\lesssim_{d, p} r_{\d_0}^{d-1}. \label{upurR}
			\end{align}
			
			Similar as Proposition \ref{propfuncid} for the Choquard equation, we get functional identities via inner product (\ref{semi}) respectively with $u$ 
			\[ \| \nabla u \|_{L^2(\real^d)}^2 +  \| u \|_{L^2(\real^d)}^2 - \| u \|_{L^{p+1}(\real^d)}^{p+1} = 0, \]
			and $x \cdot \nabla u$ (Pohozaev identity)
			\[ \frac{d-2}{2}\| \nabla u \|_{L^2(\real^d)}^2 +\frac{d}{2}\| u \|_{L^2(\real^d)}^2 - \frac{d}{p+1} \| u \|_{L^{p+1}(\real^d)}^{p+1} = 0.\]
			Eliminating the last term, we find
			\beq \| \nabla u \|_{L^2(\real^d)}^2 = \frac{d(p-1)}{(d+2) - p(d-2)} \| u \|_{L^2(\real^d)}^2 \sim_{d, p} \| u \|_{L^2(\real^d)}^2 \label{L2H1} \eeq
			
			Now we can find an $R_1 = R_1(d, p) \gg 1$ depending on constants in (\ref{louR})-(\ref{L2H1}), such that if $r_{\d_0} \ge R_1$, then 
			\[ 10 \| \nabla u \|_{L^2(B_{r_{\d_0}}^c)}^2 \le \| \nabla u \|_{L^2(\real^d)}^2 \sim_{d, p} \| u \|_{L^2(\real^d)}^2 \le \frac{11}{10} \|u\|_{L^2(B_{r_{\d_0}})}^2.   \]
			Thus the main contribution to $\dot{H}^1$ norm comes from $B_{r_{\d_0}}$.
			\beq  \| \nabla u\|_{L^2(B_{r_{\d_0}})}^2 \ge \frac{9}{10}\| \nabla u \|_{L^2(\real^d)}^2 \gtrsim_{d, p} r_{\d_0}^{d} \eeq
			However, we claim that the following two estimates in the connecting region $B_{r_{\d_0}} \backslash B_1$ and the center region $B_1$ hold.
			\begin{align}
				\| \nabla u\|_{L^2(B_{r_{\d_0}} \backslash{B_1} )}^2 &\lesssim_{d,p} r_{\d_0}^{d-1}, \label{H1middle} \\
				\| \nabla u\|_{L^2(B_1)}^2 &\lesssim_{d, p} 1 \label{H1center}.
			\end{align}
			These three estimates imply an a priori bound $R_2(d, p)$ for $r_{\d_0}$ depending on $(d, p)$, and take $R(d, p) := \max\{ d^{1/2}, R_1(d, p), R_2(d, p)\}$, we get the bound (\ref{step3}) and finish this step. 
			Next we prove these two estimates (\ref{H1middle}) and (\ref{H1center}). For simplicity, we denote 
			\[ y(r) := -\partial_r u(r).\]
			
			\underline{Step 3(a). Upper bound for $\dot{H}^1$ norm in the connecting region $B_{r_{\d_0}} \backslash B_1$}
			
			From the a priori bound (\ref{eigenbound1}) in Step 1 and radially decreasing property of $u$, we have
			\beq u(1) \lesssim_d \int_{B_1} u \lesssim_d Q(1) \int_{B_1} u \lesssim \int_{B_1} uQ \lesssim_{d, p} 1, \label{ubound}\eeq
			which implies 
			\beq \int_1^{\infty} y(r) dr = y(1) \lesssim_{d, p} 1.\label{L1y} \eeq
			We will see that this implies 
			\beq y(r) \lesssim_{d, p} 1,\quad \forall r \in [1, r_{\d_0}]. \label{ybound}\eeq 
			from the ODE evolution.
			Indeed, (\ref{semi}) and (\ref{ubound}) indicate that there exists $C_1(d, p)$ such that
			\[ \left| y'(r)+\frac{d-1}{r} y(r) \right|= \left|u(r) - u^p(r) \right| \le C_1(d, p),\quad r \in [1, \infty). \]
			Since $(r^{d-1} y)' = r^{d-1} (y' + \frac{d-1}{r} y)$, we integrate this inequality from $\tilde{r}$ to $r > \tilde{r} \ge 1$ to get
			\[ \left| r^{d-1} y(r) - \tilde{r}^{d-1} y(\tilde{r}) \right| \le \int_{\tilde{r}}^{r} s^{d-1} C_1(d, p) ds = \frac{C_1(d, p)}{d} (r^{d} - \tilde{r}^d) \le C_1(d, p) (r-\tilde{r})r^{d-1} \]
			and thus
			\beq y(r) \ge \left(\frac{\tilde{r}}{r}\right)^{d-1} y(\tilde{r}) - C_1(d, p) (r-\tilde{r}),\quad r > \tilde{r} \ge 1. \eeq
			So if for some $\tilde{r} \ge 1$ we have $y(\tilde{r}) \ge 2^{d}C_1(d, p)$, then for $r \in [\tilde{r}, \tilde{r}+1]$,
			\[ y(r) \ge 2^{-(d-1)}y(\tilde{r}) - C_1(d, p) \ge 2^{-d} y(\tilde{r}),\]
			and 
			\beq \int_1^\infty y(r) dr \ge \int_{\tilde{r}}^{\tilde{r} +1} y(r)dr \ge 2^{-d}y(\tilde{r}).\label{L1y2}\eeq
			(\ref{L1y}) and (\ref{L1y2}) confirm (\ref{ybound}). 
			
			(\ref{H1middle}) follows immediately the $L^1([1, r_{\d_0}])$ bound (\ref{L1y}) and $L^\infty([1, r_{\d_0}])$ bound (\ref{ybound}):
			\begin{align*}
				\| \nabla u\|_{L^2(B_{r_{\d_0}} \backslash{B_1} )}^2 \sim_d \int_1^{r_{\d_0}} y(r)^2 r^{d-1} dr \le r_{\d_0}^{d-1} \int_1^{r_{\d_0}} y(r) dr \left(\sup_{r\in [1, r_{\d_0}]} y(r)\right) \lesssim_{d, p} r^{d-1}_{\d_0}.
			\end{align*}

			\underline{Step 3(b). Upper bound for $\dot{H}^1$ norm in the center region $B_1$}
			
			Let $a:= u(1)$ and $v(r):= u(r) - a$, then $v$ is a radially decreasing, positive solution for the following elliptic problem on $B_1$:
			\beq \left\{ \begin{array}{ll} 
				\Delta v + g_p(v;a) = 0 & \text{in}\,\,B_1,\\
				v = 0 & \text{on}\,\,\partial B_1,
			\end{array} \right.
			\eeq
			where $g_p(v;a) := - (v+a) + (v+a)^p$.
			According to (\ref{ubound}) and (\ref{ybound}), $a = u(1)$, $-\partial_r u(1)$ are both bounded from above. So we can directly apply the method of \cite{de1982priori}, using subcriticality to verify (\ref{H1center}).
			
			Denote $$G_p(v;a) := \int_0^v g_p(s;a)ds = -\frac{\left( (v+a)^2 - a^2 \right)}{2}  +\frac{\left( (v+a)^{p+1}- a^{p+1} \right)}{p+1}.$$
			We still multiply this elliptic equation with $v$ and $x \cdot \nabla v$ respectively and integrate within $B_1$ to get 
			\begin{align}
				\| \nabla v \|_{L^2(B_1)}^2 - \int_{B_1} v g_p(v;a) dx  &= 0 \label{inner0} \\
				-(d-2)\| \nabla v \|_{L^2(B_1)}^2 + 2d \int_{B_1} G_p(v;a) dx &= 2\int_{\partial_{B_1}} |\nabla v(x)|^2 dS \label{poho0}
			\end{align}
			Eliminating $\| \nabla v \|_{L^2(B_1)}^2$, we get
			\beq 2d \int_{B_1} G_p(v;a)dx - (d-2) \int_{B_1}vg_p(v;a) dx = 2\int_{\partial_{B_1}} |\nabla v(x)|^2 dS \lesssim_{d, p} 1 \label{poho} \eeq
			Note that for $d \ge 3$, $p < \frac{d+2}{d-2}$. Comparing the highest order term of $v$ for $G_p(v;a)$ and $vg_p(v;a)$, it's easy to see that there exists $t_{d, p} \gg 1$ (continuously depending on $p$) such that for every $0 \le a = u(1) \lesssim_{d, p} 1$ as (\ref{ubound}) and $t \ge t_{d, p}$, 
			\begin{align}
				\left( \frac{d}{d-2} + \frac{p+1}{2} \right) G_p(t;a) - tg_p(t;a) \ge 0&,\nonumber \\ 
				G_p(t;a) \ge \frac{1}{2(p+1)} t^{p+1} & \label{posp1}.
			\end{align}
			Thus (\ref{poho}) implies 
			\begin{align*} \int_{B_1 \cap \{x: v(x) \ge t_{d,p} \}} \frac{\frac{d+2}{d-2} -p }{4\frac{p+1}{d-2}} v^{p+1}(x)  dx +    \int_{B_1 \cap \{x: v(x) < t_{d,p} \}} 
				2d G_p(v;a) - (d-2)vg_p(v;a) dx \lesssim_{d, p} 1   \end{align*}
			Then immediately we have
			\beq  \| v\|_{L^{p+1}(B_1)}^{p+1} \lesssim_{d, p} 1 \label{pboundin} \eeq
			for $d \ge 3$. If $d = 1, 2$, then (\ref{poho0}) directly implies 
			\[ \int_{B_1} G_p(v; a) dx \lesssim_{d, p} 1.\]
			So we similarly take a $t_{d, p}$ such that (\ref{posp1}) holds for all $a \lesssim_{d, p} 1$ as (\ref{ubound}) and $t \ge t_{d, p}$. This implies (\ref{pboundin}) for $d = 1, 2$.

			Now from (\ref{ubound}), (\ref{inner0}) and the form of $g_{p}(v; a)$, we get (\ref{H1center})
			\[ \| \nabla u \|_{L^2(B_1)}^2 =	\| \nabla v \|_{L^2(B_1)}^2 =  \int_{B_1} v g_p(v;a) dx \lesssim_{d, p} 1 +  \| v\|_{L^{p+1}(B_1)}^{p+1} \lesssim_{d, p} 1. \]
			This concludes Step 3. 
			
			\cu{Step 4. Concluding the proof.}
			
			Using the a priori bound for $r_{\d_0}$ (\ref{step3}) in Step 3, we estimate $\| u \|_{L^2(B_{R(d, p)}^c)} $ and $\| u \|_{L^2(B_{R(d, p)}^c)} $ respectively. 
			
			On the exterior region $B_{R(d, p)}^c$, we have exponential decay in Step 2 since $R_0 = \max\{r_{\d_0}, d^{\frac{1}{2}}\} \le R(d, p)$. Thus by taking $R=R(d, p)$ and integrate (\ref{upu}) with Lemma \ref{lemexp}, we see 
			\beq \| u \|_{L^2(B_{R(d, p)}^c)}^2 \lesssim_{d, p} R(d, p)^{d-1} \lesssim_{d, p} 1. \label{g2g} \eeq
			
			For the interior region $B_{R(d, p)}$, we try to use the argument of Step 3(b). First we bound the boundary values $u(R(d, p)) \lesssim_{d, p} 1$ and $-\partial_ru(R(d, p)) \lesssim_{d, p} 1$ from (\ref{eigenbound1}) and (\ref{uur}) respectively. Thereafter we can define $v := u - u(R(d, p))$ and use exactly the same argument as Step 3(b) to get 
			\[\| v \|_{L^{p+1} (B_{R(d, p)})} \lesssim_{d, p} 1.\]
			Finally,
			\begin{align} \|u \|_{L^2(B_{R(d, p)})} &\lesssim_d \| v \|_{L^2 (B_{R(d, p)})} + u(R(d, p)) R(d, p)^{\frac{d}{2}}  \nonumber\\
				& \lesssim_d \| v \|_{L^{p+1} (B_{R(d, p)})} R(d, p)^{d\left(\frac{1}{2} - \frac{1}{p+1}\right)} + u(R(d, p)) R(d, p)^{\frac{d}{2}} \lesssim_{d, p} 1.  \label{g3g}
			\end{align}
			
			Combine (\ref{g2g}) and (\ref{g3g}), we get the $L^2$ bound and therefore the $H^1$ bound via (\ref{L2H1}).

			\section{A priori estimates for the Choquard equation} \label{s32}
			
			In this section, we prove Theorem \ref{thmbound}, the a priori estimates for radially decreasing positive solutions of (\ref{Choquard}). The whole proof follows a similar framework, but differs in many ways due to the nonlinearity's nonlocal dependence on $u$. We define 
			\beq H_{d, \a, p}[u](x) := (|\cdot|^{-\a} * |u|^p)(x) |u|^{p-2}(x). \eeq
			For simplicity, we refer to it as $H[u](x)$ or even $H(x)$ if no ambiguity occurs. From Proposition \ref{propnonlocalevo}, $H[u]$, as a function of $x$, is positive and radially strictly decreasing. So we also define $s_{\l}$ by 
			\beq H[u](s_\l) = \l \eeq
			when  $\l \in (0, H[u](0))$ and $s_\l = 0$ when $\l \ge F[u](0)$.
			
			\cu{Step 1. A priori nonlinear eigenfunction estimate}
			
			We pick the same Schwartz, positive, radially decreasing $Q$ satisfying 
			\[ -\Delta Q + Q - Q^{p_0(d)} = 0 \]
			as in Step 1. of \S \ref{s31}. Then we have
			\beq \int H[u]u Q = \int (-\Delta + 1) u Q = \int u (-\Delta + 1) Q = \int u Q^{p_0} \label{intg2} \eeq
			Similarly, we define $C_0 := 2Q(0)^{p_0 - 1}$, and 
			\begin{align*}
				C_0 \int u Q &= \int_{H \le C_0} C_0 u Q + \int_{H \ge C_0} C_0 u Q \\
				&\le  \int_{B_{s_{C_0}^c}} C_0 u Q + \int H u Q 
				= \int_{B_{s_{C_0}^c}} C_0 u Q + \int u Q^{p_0} \\
				&\le\int_{B_{s_{C_0}^c}} C_0 u Q + \frac{C_0}{2} \int u Q.
			\end{align*}
			So we have
			\beq \int_{B_{s_{C_0}}} uQ \le \int_{B_{s_{C_0}^c}} u Q \eeq
			Using the monotonicity of $u$ this implies 
			\beq u(s_{C_0}) \int_{B_{s_{C_0}}} Q \le \int_{B_{s_{C_0}}} uQ \le \int_{B_{s_{C_0}^c}} u Q \le u(s_{C_0}) \int_{B_{s_{C_0}}^c} Q \nonumber \eeq
			This indicates an a priori bound for $s_{C_0}$ depending merely on $d$: there exists $R_1(d)$ such that
			\beq s_{C_0} \le R_1. \label{R1} \eeq


			\cu{Step 2. Exponential decay far away.}
			
			This step resembles the one in \S \ref{s31} very much. We take
			\beq \d_0 := \min \left\{\frac 1 2, \frac{1}{2(p-1)} \right\},  \eeq
			and
			\beq R_0:= \max\{s_{\d_0}, d^{\frac{1}{2}} \}. \eeq
			Then for $r \ge R \ge R_0$, using the equation (\ref{Choquard})
			\[(-\Delta + 1) u = H[u]u \in (0, \frac{1}{2} u)  \]
			we get decay estimates from below and above by the classical elliptic comparison theorem
			\begin{align}
				u(r) \ge \frac{ r^{-\gamma(d)} e^{-r} } { R^{-\gamma(d)} e^{-r }}u(R), \quad r \ge R \ge R_0. \label{lou2} \\
				u(r) \le \frac{ r^{-\beta(d)} e^{-\frac{r}{2} } } { R^{-\beta(d)} e^{-\frac{R}{2} }}u(R), \quad r \ge R \ge R_0, \label{upu2}
			\end{align}
			where 
			\[ \beta(d) := \left\{ \begin{array}{ll} 0 & d = 1, 2, \\ \frac{d-1}{2} & d \ge 3, \end{array}  \right. \quad
			 \gamma(d) := \left\{ \begin{array}{ll} \frac{d-1}{2} & d = 1, 2, \\ d-2 & d \ge 3. \end{array}  \right.  \]
			
			Taking $\partial_r$ on (\ref{Choquard}), we find
			\beq\left(-\Delta + 1 + \frac{d-1}{r^2} - (p-1)H\right) (-\partial_r u) = -\partial_r \left( |\cdot|^{-\a} * |u|^p \right) u^{p-1} \ge 0. \eeq
			The non-negativity of the right hand side follows Proposition \ref{propnonlocalevo}. 
			So we get a lower bound of $-\partial_r u$
			\begin{align}
				-\partial_r u(r) &\ge \frac{ r^{-\gamma(d)} e^{-2r} } { R^{-\gamma(d)} e^{-2r }}(-\partial_ru)(R), \quad r\ge R \ge R_0,\label{lour2}
			\end{align}
			
			Again, integrate this lower bound (\ref{lour2}) from $R$ to $+\infty$ with Lemma \ref{lemexp}, we get
			\beq u(R) = \int_R^\infty (-\partial_r u)(r) dr \gtrsim_d (-\partial_r u)(R). \label{uur2}\eeq
			This also implies an exponential decay of $-\partial_r u(r)$
			\beq 	-\partial_r u(r) \lesssim_{d} \frac{ r^{-\beta(d)} e^{-\frac{r}{2} } } { R^{-\beta(d)} e^{-\frac{R}{2} }}u(R), \quad r \ge R \ge R_0, \label{upur2} \eeq
			
			\cu{Step 3. Controlling $s_{\d_0}$.}
			
			We want to obtain an an a priori bound for $s_{\d_0}$. Namely, there exists some $R(d, \a, p)  \sim_{d, \a, p} 1 $ (continuously depending on $(\a, p)$) such that 
			\beq s_{\d_0} \le R(d, \a, p). \label{step32} \eeq
			This time, we discuss the evolution of $u$ and $H[u]$ more carefully and utilize the structure of $H[u]$. First we derive pointwise control of $u$ on a large interval via information from $H[u]$. Then on such interval, the value of $u$ implies a non-trivial change in $H[u]$, which finally indicates a large variation in the evolution of $u$. This contradicts with the previous pointwise bound when $s_{\d_0}$ is too large.
			
			\underline{Step 3(a). Pointwise bound of $u$ on a large interval.}
			
			Firstly, we may assume $s_{\d_0} \ge \max\{R_1(d), d^{1/2}\}$ from (\ref{R1}). Next, apply (\ref{nonlocal1}), we see
			\[ \d_0 = H[u](s_{\d_0}) \sim_{d, \a} u(s_{\d_0})^{p-2} \left(  s_{\d_0}^{-\a} \int_0^{s_{\d_0}} u^p(s) s^{d-1} ds + \int_{s_{\d_0}}^\infty u^p(s) s^{d-1-\a}ds  \right).  \]
			Also note that from the monotonicity of $u$, exponential decay (\ref{upu2}) and Lemma \ref{lemexp} those two terms on the right hand side have different order of $s_{\d_0}$
			\begin{align} 
				s_{\d_0}^{-\a} \int_0^{s_{\d_0}} u^p(s) s^{d-1} ds & \gtrsim_{d} u^{p}(s_{\d_0}) s^{d-\a}_{\d_0}, \label{nonlocal2} \\
				\int_{s_{\d_0}}^\infty u^p(s) s^{d-1-\a}ds &\lesssim_{d, \a, p} u^{p}(s_{\d_0}) s^{d-\a-1}_{\d_0}. \label{nonlocal3}
			\end{align}
			So we have an $R_2 = R_2 (d, \a, p)$ and $C_1 = C_1(d, \a, p)$, such that when $s_{\d_0} \ge R_2$, \eqref{nonlocal2} dominates and thus
			\beq u(s_{\d_0})^{p-2}  s_{\d_0}^{-\a} \int_0^{s_{\d_0}} u^p(s) s^{d-1} ds \ge C_1. \label{order1} \eeq
			In this step, we assume $s_{\d_0} \ge \max\{R_2(d, \a, p), R_1(d), d^{1/2}\}$ later on. 	    
			
			Recall from Step 1 that $s_{C_0} \le R_1$. We apply (\ref{nonlocal1}) to see for every $r \ge R_1 \ge s_{C_0}$, 
			\[ C_0(d) \ge H[u](r) \gtrsim_{d, \a, p} u(r)^{p-2}  r^{-\a} \int_0^{r} u^p(s) s^{d-1} ds \gtrsim_{d} u(r)^{2p-2} r^{d-\a}. \]
			Hence there exists $C_2 = C_2(d, \a, p)$ and $C_3=C_3(d, \a, p)$, such that 
			\begin{align}
				u(r) \le C_2 r^{-\frac{d-\a}{2p-2}},\quad &\forall r \ge R_1, \label{pointwiseu} \\
				u(R_1)^{p-2}  R_1^{-\a} \int_0^{R_1} u^p(s) s^{d-1} ds &\le C_3. \nonumber
			\end{align}
			Take $M\in [1, \frac{s_{\d_0}}{R_1}]$ to be specified later, we have
			\begin{align}
				u^{p-2}(s_{\d_0}) \int_0^{\frac{s_{\d_0}}{M}} u^p(s) s^{d-1} ds &\le u(R_1)^{p-2} \int_0^{R_1} u^p(s) s^{d-1} ds + u\left(\frac{s_{\d_0}}{M} \right)^{p-2} \int_{R_1}^{\frac{s_{\d_0}}{M}} u^p(s) s^{d-1} ds \nonumber \\
				&\le C_3 + \left[ C_2 \left(\frac{s_{\d_0}}{M}\right)^{-\frac{d-\a}{2p-2}} \right]^{p-2} \int_{R_1}^\frac{s_{\d_0}}{M} \left(  C_2 s^{-\frac{d-\a}{2p-2}}\right)^{p} s^{d-1} ds \nonumber\\
				& \le C_3 + C_2^{2p-2} \left(d-\frac{(d-\a)p}{2p-2}\right)^{-1} \left(\frac{s_{\d_0}}{M}\right)^{\a}, \label{order2}
			\end{align}
			where we used $p \ge 2 > \frac{2d-\a}{d}$, so that $-\frac{d-\a}{2p-2}p + d = \frac{p(d+\a) - 2d}{2p-2} > 0$.
			Now take $M(d, \a, p) := \left( 4C_1 C_2^{2p-2} \left(d-\frac{(d-\a)p}{2p-2}\right)^{-1} \right)^{\frac{1}{\a}}$ and $R_3(d, \a, p) := \max\{R_1 M, (4C_1C_3)^{\frac{1}{\a}} \}$. If $s_{\d_0} \ge \max \left\{ R_1, R_2, R_3, d^{1/2} \right\}$, then (\ref{order1}) and (\ref{order2}) imply
			\beq \int_{\frac{s_{\d_0}}{M}}^{s_{\d_0}} u^{2p-2}(s) s^{d-1} ds  \ge u^{p-2}(s_{\d_0}) \int_{\frac{s_{\d_0}}{M}}^{s_{\d_0}} u^p(s) s^{d-1} ds \ge \frac{1}{2}C_1 s_{\d_0}^{\a}. \label{order3} \eeq
			Combining this integral lower bound with the pointwise upper bound (\ref{pointwiseu}), we get a pointwise lower bound on a large subset of $[M^{-1} s_{\d_0}, s_{\d_0}]$: there exists an $\mu(d,\a,p) \in (0, 1)$ and $C_4(d, \a, p) > 0$, such that 
			\[ \left| \left\{ r \in [M^{-1} s_{\d_0}, s_{\d_0}] : u(r) \ge C_4 r^{-\frac{d-\a}{2p-2}} \right\} \right| \ge \mu s_{\d_0}. \]
			From the monotonicity of $u$, we have
			\beq u(r)\sim_{d, \a, p} s_{\d_0}^{-\frac{d-\a}{2p-2}},\quad r \in [M^{-1}s_{\d_0}, (M^{-1} + \mu) s_{\d_0}]. \eeq 
			We denote $[T_0, T_1] := [M^{-1}s_{\d_0}, (M^{-1} + \mu) s_{\d_0}]$, and stress that 
			\[ T_1 -T_0 \sim_{d, \a, p} s_{\d_0}. \]
			
			\underline{Step 3(b). Non-trivial evolution of $H[u]$.}
			
			Next, we discuss the evolution of $u$ and $H[u]$ on this region to arrive at an estimate of $s_{\d_0}$. 
			Denote $y(r):= -\partial_r u(r)$. Since $(r^{d-1} y)' = r^{d-1} (y' + \frac{d-1}{r} y)$, we multiply $r^{d-1}$ and integrate (\ref{Choquard}) to get
			\beq r_2^{d-1} y(r_2) - r_1^{d-1}y(r_1) = \int_{r_1}^{r_2} \left( H[u](s) - 1\right)u(s) s^{d-1}ds, \quad 0 < r_1 \le r_2 \label{evo} \eeq
			For $r \in [T_0, T_1],$ we apply (\ref{nonlocalevo}) with $\epsilon_0 := (4M)^{-1}$ and the exponential decay (\ref{upu2}) to get
			\beq -\frac{d}{dr}\left( |\cdot|^{-\a} * u^p \right)(r) \gtrsim_{d, \a, p} \left[ u^p\left({\frac{2r}{3}}\right) - u^p(2Mr)  \right] r^{d-1-\a} \gtrsim_{d, \a, p} s_{\d_0}^{-\frac{d-\a}{2(p-1)}p+d-1-\a}. \eeq 
			Thus for any $r \in [T_0, \frac{T_0 + 7T_1}{8}]$, we have
			\begin{align*}
				 &\quad H[u](r) - H[u]\left(r+\frac{T_1-T_0}{8}\right) \\
				 &= u^{p-2}(r)\left(|\cdot|^{-\a} * u^p\right)(r) - u^{p-2}\left(r+\frac{T_1-T_0}{8}\right) \left(|\cdot|^{-\a} * u^p\right) \left(r+\frac{T_1-T_0}{8}\right)  \\
				 &\ge u^{p-2}(r) \left[ \left(|\cdot|^{-\a}* u^p\right)(r)- \left(|\cdot|^{-\a}* u^p\right) \left(r+\frac{T_1-T_0}{8}\right)  \right] \\
				 &\ge u^{p-2}(T_0) \int_r^{r+\frac{T_1-T_0}{8}} \frac{d}{ds}({-|\cdot|^{-\a} * u^p})(s)ds \\
				 &\gtrsim_{d, \a, p} s_{\d_0}^{-\frac{d-\a}{2p-2}(p-2)} (T_1-T_0) s_{\d_0}^{-\frac{d-\a}{2(p-1)}p + d - 1 - \a}  \sim_{d, \a, p} 1
 			\end{align*}
			To be more specific, there exists an $\d_1(d, \a, p) > 0$ such that 
			\beq H(r) - H\left(r+\frac{T_1-T_0}{8}\right) \ge \d_1,\quad r \in [T_0, \frac{T_0 + 7T_1}{8}]. \label{Fvar} \eeq 
			
			\underline{Step 3(c). Large evolution of $u$.}
			
			Now we divide into two cases with respect to the position of $s_1$, namely $F(s_1) = 1$, and derive an a priori bound of $s_{\d_0}$ in each case.
			
			\emph{Case 1. $s_1 \le \frac{T_0+T_1}{2}$.}  
			
			(\ref{Fvar}) implies that for $r \ge \frac{3T_0+5T_1}{8}$,
			\[ H(r) \le H\left(r-\frac{T_1-T_0}{8} \right) - \d_1 \le H(s_1)-\d_1 = 1-\d_1, \]
			The evolution estimate (\ref{evo}) then indicates a lower bound for $y(r)$ with $r_1 = r \in [\frac{3T_0+5T_1}{8}, \frac{T_0+3T_1}{4}]$ and $r_2 = T_1$:
			\[ y(r) \ge r^{-(d-1)}\int_{r}^{T_1}(1-H(s))u(s)s^{d-1} ds \gtrsim_{d, \a, p} s_{\d_0}^{-(d-1) + 1 -\frac{d-\a}{2(p-1)} + d-1} = s_{\d_0}^{1-\frac{d-\a}{2(p-1)}}.  \]
			So we integrate it to get
			\[  u\left( \frac{3T_0+5T_1}{8}\right) \ge \int_{\frac{3T_0+5T_1}{8}}^{\frac{T_0+3T_1}{4}} y(r)dr \gtrsim_{d. \a, p} s_{\d_0}^{2-\frac{d-\a}{2(p-1)}}. \]
			This implies a bound $s_{\d_0} \le R_4(d, \a, p)$ from (\ref{pointwiseu}).
			
			\emph{Case 2. $s_1 \ge \frac{T_0+T_1}{2}$.}
			
			Conversely, we focus on the interval $[T_0, \frac{T_0+T_1}{2}]$ in this case. For $r \in [T_0, \frac{5T_0 + 3T_1}{8}]$, 
			\[ H(r) \ge H\left(r+\frac{T_1-T_0}{8} \right) + \d_1 \ge H(s_1)+\d_1 = 1+\d_1, \]
			And for $r \in [\frac{3T_0+T_1}{4}, \frac{5T_0+3T_1}{8}]$, we take $r_1 = T_0$, $r_2 =r$ and apply (\ref{evo}) 
			\[ y(r) \ge r^{-(d-1)}\int_{T_0}^{r} (H(s) - 1)u(s)s^{d-1} ds \gtrsim_{d, \a, p} s_{\d_0}^{-(d-1) + 1 -\frac{d-\a}{2(p-1)} + d-1} = s_{\d_0}^{1-\frac{d-\a}{2(p-1)}}.  \]
			Similarly, we integrate that on $[\frac{3T_0+T_1}{4}, \frac{5T_0+3T_1}{8}]$ to get a lower bound of $u\left( \frac{3T_0+5T_1}{8}\right)$. Combining with (\ref{pointwiseu}), this provides us with a bound $s_{\d_0} \le R_4'(d, \a, p)$. 
			
			So now we can conclude this step by taking $R(d, \a, p) := \max\{d^{\frac{1}{2}}, R_1, R_2, R_3, R_4, R_4' \}$ in (\ref{step32}).
			
			\cu{Step 4. A priori $L^2$ bound.}
			
			In this step, we derive an a priori bound of $L^2(\real^d)$ norm
			\beq \| u \|_{L^2(\real^d)} \lesssim_{d, \a, p} 1 \label{step42}\eeq
			with its constant depending continuously on $(\a, p)$. We control $\|u\|_{L^2(B_{2R}^c)}$ and $\|u\|_{L^2(B_{2R})}$ respectively with $R$ from (\ref{step32}).
			
			The control on exterior region follows directly from the exponential decay (\ref{upu2}) and the pointwise control (\ref{pointwiseu}). Moreover, from (\ref{uur2}), we also get $-\partial_r u({2R}) \lesssim_d u(2R) \lesssim_{d, \a, p} 1$.
			
			Regarding the interior region $B_{2R}$, we use the local argument similar to Step 3(b) in \S \ref{s31} (originated in \cite{de1982priori}), but more involved due to the essentially nonlocal nature of the nonlinearity $H[u]$. 
			\begin{rmk}\label{p2s}
				The structure of $H[u]$ (specifically, (\ref{nonlocal1})) also implies a simple way to obtain the desired bound when $p=2$. Indeed,
				\[ \d_0 \ge H[u]({2R}) = (|\cdot|^{-\a} * u^2)(2R) \gtrsim_{d, \a} R^{-\a} \int_{B_{2R}} u^2 dx \sim_{d, \a, p} \| u\|_{L^2(B_{2R})}^2. \]
				However, this argument does not work for $p > 2$ due to the lack of lower bound of $u({2R})$. Our following argument solves this problem and presents a uniform control for $p \ge 2$ as well. 
			\end{rmk}
			Denote $a := u({2R})$. We first claim the following two functional identities (the local version of \eqref{func01} and \eqref{func02})
			\begin{align}
				\int_{B_{2R}} \left(|\nabla u|^2 + u^2 - H[u]u^2 \right)dx &= \uppercase\expandafter{\romannumeral1} \label{func1}\\
				\int_{B_{2R}} \left(-\frac{d-2}{2} |\nabla u|^2 -\frac{d}{2} u^2 + \frac{2d-\a}{2p} H[u]u^2\right)dx &= \uppercase\expandafter{\romannumeral2}_1+
				\uppercase\expandafter{\romannumeral2}_2 +
				\uppercase\expandafter{\romannumeral2}_3 \label{func2}	    	\end{align}
			where 
			\begin{align*}
				\uppercase\expandafter{\romannumeral1}& =  	a \int_{\partial B_{2R}} \partial_r ud\sigma, \\
				\uppercase\expandafter{\romannumeral2}_1 & = {2R}\int_{\partial B_{2R}} \left(\frac{1}{2} |\nabla u|^2  +  \frac{1}{p}H[u]u^2 - \frac{1}{2}a^2\right) d\sigma  \\
				\uppercase\expandafter{\romannumeral2}_2  &= - \frac{\a}{2p} \int_{B_{2R}} \int_{B_{2R}^c} |x-y|^{-\a} u^p(y) u^p(x) dydx\nonumber \\
				\uppercase\expandafter{\romannumeral2}_3  &= \frac{\a}{p} \int_{B_{2R}} \int_{B_{2R}^c} |x-y|^{-\a} u^p(y) u^p(x) \frac{(x-y) \cdot x}{|x-y|^2} dydx
			\end{align*}
			Indeed, (\ref{func1}) and (\ref{func2}) come from multiplying (\ref{Choquard}) with $u$ and $x \cdot \nabla u$ respectively, integrating on $B_{2R}$ and using integration by parts. The only tricky point is to apply the symmetry 
			\begin{align*} \int_{B_{2R}} \int_{B_{2R}} |x-y|^{-\a} u^p(y) u^p(x) \frac{(x-y) \cdot x}{|x-y|^2} dydx &= \int_{B_{2R}} \int_{B_{2R}} |x-y|^{-\a} u^p(y) u^p(x) \frac{(y-x) \cdot y}{|x-y|^2} dydx \\
				&= \frac{1}{2} \int_{B_{2R}} \int_{B_{2R}} |x-y|^{-\a} u^p(y) u^p(x) dydx. 
			\end{align*}
			to derive (\ref{func2}). 
			
			Next, we claim that right hand sides of (\ref{func1}) and (\ref{func2}) are bounded for our $u$.
			\begin{claim}
				\beq |\uppercase\expandafter{\romannumeral1}| +|\uppercase\expandafter{\romannumeral2}_1| +|\uppercase\expandafter{\romannumeral2}_2|
				+|\uppercase\expandafter{\romannumeral2}_3| \lesssim_{d, \a ,p} 1.  \eeq
			\end{claim}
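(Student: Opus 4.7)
I will bound $I, II_1, II_2, II_3$ individually using three inputs from the previous steps: the pointwise decay (\ref{upu2})--(\ref{uur2}) from Step 2 on $\{r \ge R_0\}$; the size bound $R \sim_{d,\a,p} 1$ from Step 3 combined with $R_0 \ge d^{1/2} \ge 1$, which forces $R \ge 1$; and the a priori estimate $\int u^p Q \lesssim 1$ from Step 1, which via positivity and radial monotonicity of $Q$ yields $\int_{B_{2R}} u^p \le Q(2R)^{-1} \int u^p Q \lesssim 1$. With these in hand the boundary terms $I$ and $II_1$ are immediate: pointwise estimates on $\partial B_{2R} \subset \{|x| \ge R_0\}$ give $u(2R), |\partial_r u(2R)|, H[u](2R) \lesssim 1$, while $|\partial B_{2R}| \sim R^{d-1}$ together with the $R$-prefactor in $II_1$ is $O(1)$, so $|I| + |II_1| \lesssim_{d,\a,p} 1$.

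For $II_2$, I would bound $W(x) := \int_{B_{2R}^c} |x-y|^{-\a} u^p(y)\,dy$ uniformly for $x \in B_{2R}$ by splitting the $y$-domain into the close annulus $B_{4R} \setminus B_{2R}$, on which monotonicity gives $u(y) \le a$ and $\int_{|z| \le 6R} |z|^{-\a}\,dz \lesssim R^{d-\a}$ is finite since $\a < d$, and the far region $B_{4R}^c$, where $|x-y| \ge |y|/2$ and (\ref{upu2}) ensure exponential convergence. This gives $W(x) \lesssim 1$, and hence $|II_2| \lesssim \int_{B_{2R}} u^p \lesssim 1$.

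For $II_3$, the naive estimate $|(x-y)\cdot x/|x-y|^2| \le |x|/|x-y|$ generates the singular factor $|x-y|^{-\a-1}$, whose $y$-integral over the close annulus diverges once $\a+1 \ge d$. The remedy is to swap the integration order. After the change $y = x+z$,
\begin{align*}
\int_{B_{2R}} \int_{B_{2R}^c,\,|x-y|<1} |x-y|^{-\a-1} u^p(x) u^p(y)\,dy\,dx \lesssim a^p \int_{|z|<1}|z|^{-\a-1} \bigl|\{x \in B_{2R} :\, x+z \in B_{2R}^c\}\bigr|\,dz.
\end{align*}
For fixed $z$ with $|z|<1$, the inner set is contained in the boundary shell $\{2R-|z| \le |x| < 2R\}$, whose measure is $\lesssim |z|R^{d-1}$, and on that shell $u^p(x) \lesssim 1$ by (\ref{upu2}) applied at $|x| \ge 2R-|z| \ge 2R-1 \ge R_0$ (using $R \ge R_0 \ge 1$). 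The double integral is therefore $\lesssim R^{d-1} \int_{|z|<1} |z|^{-\a}\,dz \lesssim 1$, convergent precisely because $\a < d$. The complementary region $|x-y| \ge 1$ is trivially dominated by $\|u^p\|_{L^1(B_{2R})} \|u^p\|_{L^1(\real^d)} \lesssim 1$ via Step 1 and exponential decay. Absorbing the $|x| \le 2R$ prefactor yields $|II_3| \lesssim 1$.

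The hardest step is the treatment of $II_3$ in the regime $\a \ge d-1$, where a naive pointwise $y$-integration simply diverges; the thin-shell structure of $B_{2R}$ near $\partial B_{2R}$, made accessible only by exchanging the integration order, is what supplies the missing linear factor $|z|$ that upgrades $|z|^{-\a-1}$ to the integrable $|z|^{-\a}$.
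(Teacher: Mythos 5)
Your treatment of the boundary terms $\mathrm{I}$ and $\mathrm{II}_1$ matches the paper, and your Fubini/order-swap argument for the near-diagonal part of $\mathrm{II}_3$ is a genuinely different and workable alternative to the paper's device: the paper instead integrates by parts in $y$ to trade the non-integrable weight $|x-y|^{-\a-1}$ for a boundary term on $\partial B_{2R}$ plus a bulk term containing $\nabla u$, which it controls via the exponential decay of $-\partial_r u$; your thin-shell measure estimate $|\{x\in B_{2R}:\ x+z\in B_{2R}^c\}|\lesssim |z|R^{d-1}$ achieves the same gain of one power of $|z|$ more directly and avoids needing derivative bounds on $u$ in the exterior.

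There is, however, a genuine gap in your bulk estimates. The input $\int_{B_{2R}}u^p\lesssim 1$, which you invoke for $\mathrm{II}_2$ and for the region $|x-y|\ge 1$ of $\mathrm{II}_3$, is not available at this stage. The bound $\int u^pQ\lesssim 1$ is the conclusion of Step 1 of the \emph{model} case in \S\ref{s31}, where the nonlinearity is the local power $u^p$; in the Choquard proof, Step 1 only yields $\int H[u]uQ=\int uQ^{p_0}$ together with the relative inequality $\int_{B_{s_{C_0}}}uQ\le\int_{B_{s_{C_0}}^c}uQ$, whose sole output is the radius bound $s_{C_0}\le R_1$. At the point of the Claim no uniform control of $u$ near the origin has been established (that is precisely what the theorem is proving), so $\int_{B_{2R}}u^p$ cannot be treated as $O(1)$. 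What \emph{is} available, via (\ref{nonlocal1}), is the weighted bound $u^{p-2}(2R)\int_{B_{2R}}u^p\lesssim_{d,\a}R^{\a}H[u](2R)\le R^{\a}\d_0$, and this suffices: in both $\mathrm{II}_2$ and the far region of $\mathrm{II}_3$, the inner integral over $B_{2R}^c$ produces, by monotonicity ($u^p(y)\le u^{p-2}(2R)\,u^2(y)$ for $|y|\ge 2R$), exactly the compensating factor $u^{p-2}(2R)$ to pair with the interior integral. This pairing is the crux of the paper's argument for $p>2$ (cf.\ Remark \ref{p2s}); for $p=2$ your version is fine since the weight is trivial. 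Your near-diagonal estimate for $\mathrm{II}_3$ is unaffected by this issue, as it only uses $u$ on the shell $\{|x|\ge 2R-1\}$, where (\ref{pointwiseu}) applies.
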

			We postpone its proof and see how this implies the $L^2(B_{2R})$ bound. Eliminating $|\nabla u|^2$ in (\ref{func1}) and (\ref{func2}), we get from the claim that
			\beq \left| \int_{B_{2R}} \left[ \frac{2d - \a - p(d-2)}{2p}H[u] - 1 \right]u^2 dx \right| \lesssim_{d,\a,p}1 \label{mismatch} \eeq
			Note that $2d - \a - p(d-2) > 0$ follows from (\ref{para2}).
			Take $A:= \frac{4p}{2d - \a - p(d-2)}$. If $s_A \ge 2R$, (\ref{mismatch}) yields the desired $L^2$ bound on $B_{2R}$. So we consider $s_A \in [0, {2R})$, in which case \eqref{mismatch} indicates
			\beq \int_{B_{s_A}} u^2dx \lesssim_{d, \a, p} 1 \label{sa} \eeq
			We further discuss $u(s_A)$ to control $L^2$ norm on $B_{2R} \backslash B_{s_A}$:
			\begin{itemize}
				\item If $u(s_A) \le 1$, then 
				\[ \int_{B_{2R} \backslash B_{s_A}} u^2 dx \le \int_{B_{2R} \backslash B_{s_A}} dx \lesssim_{d, \a, p} 1 \]
				\item If $u(s_A) \ge 1$, then the argument of Remark \ref{p2s} works.
				\begin{align*} A &= (|\cdot|^{-\a} * u^p)(s_A) u^{p-2}(s_A) \ge (|\cdot|^{-\a} * u^p)(s_A)  \\
					&\gtrsim_{d, \a} s_A^{-\a} \int_{0}^{s_{A}} u^p s^{d-1} ds + \int_{s_A}^{2R} u^p s^{d-1-\a} ds \\
					&\ge (2R)^{-\a} \int_{B_R} u^p dx \gtrsim_{d, \a, p} \|u\|_{L^p(B_{2R})}^p \gtrsim_{d, \a, p} \|u\|_{L^2(B_{2R})}^p.
				\end{align*}
			\end{itemize}
			These discussions and (\ref{sa}) imply (\ref{step42}). 
			
			To end this step, we now prove the claim. $\uppercase\expandafter{\romannumeral1}$ and $\uppercase\expandafter{\romannumeral2}_1$ are bounded from the pointwise bound of $u({2R})$ (by \eqref{pointwiseu}), $-\partial_r u({2R})$ (by \eqref{upur2}) and $H[u](2R) \le H[u](s_{\d_0}) \le \d_0$. For $\uppercase\expandafter{\romannumeral2}_2$, we first check
			\beq
			\begin{split}
				\sup_{x \in B_{2R}} \int_{B_{2R}^c} u^2(y) |x-y|^{-\a}dy&\le \sup_{x \in B_{2R}} \int_{B_{4R}^c}  u^2(y) |x-y|^{-\a}dy + \int_{B_{4R} \backslash B_{2R}} u^2(y) |x-y|^{-\a}dy \\
				&\lesssim_d \int_{4R}^\infty u^2(r)\left(\frac{r}{2}\right)^{-\a}r^{d-1} dr + u^2({2R}) \int_{B_{6R}} |z|^{-\a} dz 
				\lesssim_{d, \a, p} 1,
			\end{split} \label{unif1}
			\eeq
			where the last inequality follows the exponential decay (\ref{upu2}) and $\a < d$. This yields
			\begin{align*}
				\left|\uppercase\expandafter{\romannumeral2}_2 \right| 
				\le \frac{\a}{2p} u^{p-2}({2R})\int_{B_{2R}} u^p(x)dx\left[ \sup_{x \in B_{2R}} \int_{B_{2R}^c} u^2(y) |x-y|^{-\a}dy \right] \lesssim_{d, \a, p} 1
			\end{align*}
			where we also apply (\ref{nonlocal1}) to obtain $u^{p-2}({2R})\int_{B_{2R}} u^p(x)dx \lesssim_{d, \a} R^\a H[u]({2R}) \le R^\a \d_0$.
			
			Finally, we will control $\uppercase\expandafter{\romannumeral2}_3$. We integrate by parts to avoid the weight $|x-y|^{-\a-1}$ which is not integrable when $\a \ge d-1$
			\begin{align*}
				p \uppercase\expandafter{\romannumeral2}_3 =& \sum_{i=1}^d \int_{B_{2R}} x_i u^p(x) \int_{B_{2R}^c} \partial_{y_i} |x-y|^{-\a} u^p(y) dydx \nonumber\\
				= &\sum_{i=1}^d \int_{B_{2R}} x_i u^p(x) \int_{B_{2R}^c} \left[\partial_{y_i} \left( |x-y|^{-\a} u^p(y) \right) - |x-y|^{-\a} \partial_{y_i} \left(u^p(y) \right) \right]  dydx \nonumber\\
				=& - \sum_{i=1}^d \int_{B_{2R}} x_i u^p(x) \int_{\partial B_{2R}} |x-y|^{-\a} u^p(y) e_i\cdot d\sigma dx \\
				& - p\sum_{i=1}^d \int_{B_{2R}} x_i u^p(x) \int_{B_{2R}^c}   |x-y|^{-\a} u^{p-1}(y)  \partial_i u(y)  dy dx  := \uppercase\expandafter{\romannumeral2}_{31} + \uppercase\expandafter{\romannumeral2}_{32}
			\end{align*}
			where $e_i$ is the $i$-th vector of the standard basis. 
			
			Similar as (\ref{unif1}), with the exponential decay of $-\partial_r u$ from (\ref{upur2}), we have 
			\beq \sup_{x \in B_{2R}} \int_{B_{2R}^c} |x-y|^{-\a} u(y)|\nabla u(y)| dy\lesssim_{d, \a, p} 1. \label{unif2} \eeq
			Then
			\[ |\uppercase\expandafter{\romannumeral2}_{32}|\lesssim_{d,\a, p} 1 \]
			follows in the same way as controlling $\uppercase\expandafter{\romannumeral2}_2$. 
			And $\uppercase\expandafter{\romannumeral2}_{31}$ comes as
			\begin{align*}
				|\uppercase\expandafter{\romannumeral2}_{31}|& \le 2dR \left[ \int_{B_R}  u^p(x) \int_{\partial B_{2R}} |x-y|^{-\a} u^p({2R}) d\sigma dx + \int_{B_{2R}\backslash B_R} u^p(x) \int_{\partial B_{2R}} |x-y|^{-\a} u^p({2R}) d\sigma dx  \right]  \\
				&\le  2dR \left[ \int_{B_R}  u^p(x)dx R^{-\a} u^p({2R})\int_{\partial B_{2R}}  d\sigma  + u^p(R)u^p(2R) \int_{B_{4R}} |z|^{-\a} dz   \int_{\partial B_{2R}} d\sigma \right] \\
				&  \lesssim_{d,\a,p} H[u](R) u^{2}(2R) + u^p(R)u^p(2R)     \lesssim_{d, \a, p} 1.
			\end{align*}
			This finishes the proof of that claim and this step.

			\cu{Step 5. Concluding the proof.}
			
			In this final step, we use the $L^2$ a priori bound (\ref{step42}) to verify (\ref{aprioribound}) and (\ref{aprioribound2}). 
			
			The $L^2$ bound implies an $H^1$ bound immediately from Proposition \ref{propfuncid}. Then for given $r \in (1, \infty)$, we apply the proof of \cite[Proposition 4.1]{moroz2013groundstates}, the classic bootstrap method for semilinear elliptic problems plus the Hardy-Littlewood-Sobolev inequality to improve the regularity of solutions. The $H^1$ bound therefore implies a $W^{2,r}$ bound (\ref{aprioribound}) after finite times of bootstrap iterations. The uniformity of $(\a, p, r)$ in \eqref{aprioribound} comes from the uniformity of constants in the Hardy-Littlewood-Sobolev inequality and the elliptic estimates in every iteration. 
			
			
			As for (\ref{aprioribound2}), we use Sobolev embedding to get a $C^1(\real^d)$ bound from (\ref{aprioribound}). Then we get pointwise bound of $u(R)$ from (\ref{pointwiseu}) and (\ref{step32}), and thus obtain exponential decay of $u(r)$, $|\nabla u(r)| = -\partial_r u(r)$ when $r\ge R(d, \a, p)$ from  (\ref{upu2}) and (\ref{upur2}). These two facts yield (\ref{aprioribound2}) and finish the whole proof of Theorem \ref{thmbound}. 
			
			\section{Non-degeneracy and Uniqueness}\label{s4}
			
			In this section, we prove the non-degeneracy and uniqueness of positive solution for $(\a, p) \in [d-2-\d, d-2+\d] \times [2, 2+\d]$ with $d \in \{ 3, 4, 5\}$, $\d \ll 1$. The  starting point will be non-degeneracy and uniqueness for the Newtonian case $(\a, p) = (d-2, 2)$ with $d \in \{ 3, 4, 5\}$. We refer the reader to \cite{chen2021nondegeneracy} for non-degeneracy and \cite[Appendix A]{arora2019global} for uniqueness, also to \cite{lieb1977existence,lenzmann2009uniqueness} for the original proof for $d = 3$.
			
			\subsection{Compactness analysis}\label{s51}
			
			As a preparation, we first establish the following compactness result for radial positive solutions. It makes use of the a priori bound Theorem \ref{thmbound}.
			
			\begin{prop}\label{propasym}
				Let $d \in \{3, 4, 5\}$, there exists $\delta > 0$ such that the following holds. For any sequence $\{(\a_n, p_n) \} \subset [d-2 - \delta, d-2+\d] \times [2, 2+\d]$ with $( \alpha_n, p_n) \rightarrow (d-2, 2)$ when $n \rightarrow \infty$, and $Q_n$ be an $H^1$ radial positive solution for (\ref{Choquard}) with parameters $(d, \a_n, p_n)$. Then  
				\[ Q_n \rightarrow Q_0 \quad \text{in}\,\,H^1(\real^d)\cap L^\infty(\real^d)\]
				where $Q_0$ is the unique radial positive solution for $(d, \a, p) = (d, d-2, 2)$.
				
			\end{prop}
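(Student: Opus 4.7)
The plan is to view this as a compactness argument powered by the uniform a priori bounds of Theorem~\ref{thmbound}. Since $(\alpha_n,p_n)$ ranges in a compact subset of the parameter region \eqref{para2}, and since the constants in Theorem~\ref{thmbound} depend continuously on $(\alpha,p)$, I obtain bounds $\|Q_n\|_{W^{2,r}(\real^d)} \le C(r)$ for every $r \in (1,\infty)$, together with a uniform pointwise decay $|Q_n(x)| + |\nabla Q_n(x)| \le C e^{-|x|/2}$. These uniform bounds are what make passing to the limit possible, and they also cut off the difficulties coming from the unbounded domain.

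From the $W^{2,r}$ bound with $r$ large, Sobolev embedding gives a uniform $C^{1,\beta}$ bound on $Q_n$, so by Arzelà--Ascoli I can extract a subsequence converging in $C^1_{\mathrm{loc}}(\real^d)$ to some radial, non-negative function $\tilde Q$. The exponential decay upgrades this to convergence in $L^\infty(\real^d)$ and in $L^q(\real^d)$ for every $q \in [1,\infty]$, and combined with the $H^1$ bound it also gives $Q_n \to \tilde Q$ strongly in $H^1(\real^d)$ (any $H^1$-tail is uniformly small beyond some large ball, by exponential decay). In particular, Corollary~\ref{corolo} applied to each $Q_n$ gives a uniform lower bound $\|Q_n\|_{H^1} \gtrsim 1$, whose constant is continuous in $(\alpha,p)$, so $\tilde Q \not\equiv 0$.

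Next, I pass to the limit in the equation for $Q_n$. The only delicate term is the nonlocal nonlinearity $(|\cdot|^{-\alpha_n} \ast Q_n^{p_n}) Q_n^{p_n-1}$, where both the Riesz kernel and the power vary with $n$. Uniform $L^\infty$ decay and $L^q$-convergence let me split the convolution into a near region and a far region: on the far region the integrand is dominated by an integrable exponential, and on the near region I use $\alpha_n \to d-2 < d$ together with Hardy--Littlewood--Sobolev and the continuity of $|x-y|^{-\alpha}$ in $\alpha$ on compact subsets away from the diagonal. Combined with uniform convergence $Q_n^{p_n} \to \tilde Q^2$ (since $Q_n \to \tilde Q$ in $L^\infty$ and $p_n \to 2$, using $\log Q_n$ is uniformly bounded on the support of a cutoff, and exponential decay outside), I conclude that $\tilde Q$ is an $H^1$ radial non-negative non-trivial solution of \eqref{Choquard} with $(\alpha,p) = (d-2,2)$. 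By Theorem~\ref{thmprop}(2) $\tilde Q$ is smooth, hence strictly positive by the strong maximum principle, so by the known uniqueness (\cite[Appendix A]{arora2019global}) $\tilde Q = Q_0$. Since every subsequential limit equals $Q_0$, the full sequence converges.

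The main obstacle I expect is handling the nonlocal term when both $\alpha_n$ and $p_n$ move simultaneously, especially showing $(|\cdot|^{-\alpha_n} \ast Q_n^{p_n})(x) \to (|\cdot|^{-(d-2)} \ast \tilde Q^2)(x)$ in an appropriate sense. The technical device here is Proposition~\ref{propnonlocal1}, which controls the Riesz potential of a radially decreasing function by two explicit integrals with constants uniform in a compact range of $\alpha$; paired with the uniform exponential decay of $Q_n$, this reduces the problem to dominated convergence on bounded regions. A secondary subtlety is verifying that $\tilde Q$ is non-trivial; the uniform $H^1$ lower bound from Corollary~\ref{corolo} and the strong $H^1$ convergence are exactly what handle this.
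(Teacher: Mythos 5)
Your proposal is correct in substance, but it takes a genuinely different route from the paper. The paper uses only the $H^1\cap L^\infty$ part of the a priori bound, extracts a \emph{weakly} convergent subsequence via the compact embedding $H^1_{rad}\hookrightarrow L^q_{rad}$ for $q\in(2,\frac{2d}{d-2})$, and then works hard to upgrade: a separate claim (proved by a three-term decomposition with Hardy--Littlewood--Sobolev and dominated convergence) handles the nonlocal term tested against varying $\phi_n$; strong $H^1$ convergence is recovered \emph{afterwards} from the functional identities (\ref{func03})--(\ref{func04}) (norm convergence plus weak convergence); and $L^\infty$ convergence is obtained last, by feeding the $H^1$ convergence back through the equation and $W^{2,\frac{2d}{d-2}}\hookrightarrow L^\infty$ for $d<6$. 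You instead exploit the full strength of Theorem \ref{thmbound} --- the uniform $W^{2,r}$ bounds and, crucially, the \emph{uniform} exponential decay of $Q_n$ and $\nabla Q_n$ --- to get strong $C^1_{loc}$ compactness by Arzel\`a--Ascoli and uniform smallness of all tails, which yields $L^\infty$, $L^q$ and $H^1$ convergence in one stroke and makes passing to the limit in the nonlinearity much softer. This is a legitimate and arguably cleaner argument given that the a priori estimates are already proven; the paper's version buys the fact that it would survive with only $H^1\cap L^\infty$ control and no decay information. One micro-step of yours needs repair: the justification of $\|Q_n^{p_n}-Q_n^2\|_{L^\infty}\to 0$ via ``$\log Q_n$ uniformly bounded on the support of a cutoff'' presupposes a uniform positive lower bound for $Q_n$ on compact sets, which you do not have before identifying the limit as $Q_0$. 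The statement is still true: split into $\{Q_n\ge\eta\}$, where the uniform upper bound gives $|Q_n^{p_n-2}-1|\lesssim_\eta|p_n-2|$, and $\{Q_n<\eta\}$, where both $Q_n^{p_n}$ and $Q_n^2$ are $O(\eta^2)$; this is exactly the partition device the paper uses in its own proof of the claim.
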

			\begin{proof}
				It suffices to show that for any sequence $\{Q_n\}$ as above, we can find subsequence $Q_{n_k} \rightarrow Q_0$ in $H^1$ and $L^\infty$. Before starting, we take $\d$ small enough such that $(\a_n, p_n)$ satisfies (\ref{para2}).
				
				\cu{Step 1. Bounds and convergences.}
				
				Theorem \ref{thmbound} and the range of $(\a_n, p_n)$ indicate a uniform upper bound
				\beq \| Q_n \|_{H^1} + \| Q_n\|_{L^\infty} \lesssim_{d, \delta} 1 . \label{Linfbound} \eeq 
				Note that $[\frac{11}{5}, 3] \subset (2, \frac{2d}{d-2})$ for $d \in \{3, 4, 5\}$. Thus by the compact embedding $H^1_{rad}(\real^d) \hookrightarrow L^q_{rad}(\real^d)$ for $q \in (2, \frac{2d}{d-2})$, there exist $Q \in H^1_{rad}(\real^d)$ and a subsequence (still denoted by $\{Q_n\}$) such that 
				\beq Q_n \rightharpoonup Q \quad \text{in} \,\,H^1;\qquad Q_n \rightarrow Q \quad \text{in} \,\,L^{\frac{11}{5}} \cap L^3. \label{H1conv} \eeq
				A further subsequence (still denoted by $\{Q_n\}$) implies almost everywhere convergence
				\beq Q_n \rightarrow Q\quad a.e.  \label{aeconv}\eeq
				
				With these bounds and convergences, we claim the following convergence.
				\begin{claim} For any uniformly bounded $L^\frac{4d}{d+2}$ functions $\{\phi_n\}$ and $\phi$ with $\phi_n \rightarrow \phi$ in $L^{\frac{4d}{d+2}}$, we have
					\beq \int_{\real^d} \left(|\cdot|^{-\a_n}* Q_n^{p_n} \right)Q_n^{p_n -1} \phi_n dx\rightarrow \int_{\real^d} \left(|\cdot|^{-(d-2)}* Q^{2} \right)Q \phi dx \label{nonlocalconv}\eeq
				\end{claim}
				
				\cu{Step 2. End of the proof with the claim.}
				
				Postponing the proof of (\ref{nonlocalconv}) to Step 3, we finish the proof of this proposition with that convergence. Firstly, for any $\phi \in H^1(\real^d)$, we take $\phi_n = \phi$ for all $n \in \posint$ and use (\ref{H1conv}) and (\ref{nonlocalconv}), then
				\begin{align*} &\int_{\real^d}\left[\nabla Q \cdot \nabla \phi + Q\phi -   \left(|\cdot|^{-(d-2)}* Q^{2} \right)Q\phi \right] dx \\
					=& \lim_{n\rightarrow \infty} \int_{\real^d}\left[ \nabla Q_n \cdot \nabla \phi + Q_n\phi-  \left(|\cdot|^{-\a_n}* Q_n^{p_n} \right)Q_n^{p_n -1}\phi  \right] dx= 0. \end{align*}
				Hence $Q$ is an $H^1$ radial solution of (\ref{Choquard}) with $(d, \a, p) = (d, d-2, 2)$.Taking $\phi_n := Q_n \rightarrow Q =:\phi$ in $L^\frac{4d}{d+2}$, which follows (\ref{H1conv}) and that $\frac{11}{5} < \frac{4d}{d+2} < 3$ for $d \in \{3, 4, 5\}$, we have
				\beq \int_{\real^d} \left(|\cdot|^{-\a_n}* Q_n^{p_n} \right)Q_n^{p_n} dx \rightarrow \int_{\real^d}\left(|\cdot|^{-(d-2)}* Q^{2} \right)Q^2 dx. \label{nonlconv} \eeq
				Also by Corollary \ref{corolo} and (\ref{func01}), for $(\a_n, p_n) \in [d-2-\d, d-2+\d] \times [2, 2+\d]$, we have a uniform lower bound 
				\[\int_{\real^d}  \left(|\cdot|^{-\a_n}* Q_n^{p_n} \right)Q_n^{p_n} dx =  \| Q_n\|_{H^1}^2 \gtrsim_{d,\d} 1. \]
				So the limit in (\ref{nonlconv}) is nonzero and $Q$ is nonzero. Notice that (\ref{aeconv}) ensures that $Q$ is also non-negative and, moreover, positive from the strong maximal principle. So far, we have verified $Q$ to be a positive and radial $H^1$ solution for (\ref{Choquard}) with $(d, \a, p) = (d, d-2, 2)$. Thus $Q = Q_0$ by uniqueness.

				Together with (\ref{func03}) , (\ref{func04}) and $(\a_n, p_n) \rightarrow (d-2, 2)$, we see
				\[ \| Q_n\|_{L^2}^2 \rightarrow \| Q\|_{L^2}^2,\qquad \| \nabla Q_n\|_{L^2}^2 \rightarrow \| \nabla  Q\|_{L^2}^2. \]
				Thus (\ref{H1conv}) can be improved to be strong $H^1$ convergence. 
				
				As for $L^\infty$ convergence, we use similar strategy as in Step 5 of \S \ref{s32} to improve the regularity. Take the difference of equations of $Q_n$ and $Q_0$, we have 
				\beq (-\Delta + 1)(Q_n - Q_0) = (|\cdot|^{-\a_n} * Q_n^{p_n} ) Q_n^{p_n-1} - (|\cdot|^{-(d-2)} * Q_0^{2} ) Q_0.\label{diff} \eeq
				Using \eqref{contineq1} in Lemma \ref{lemdiff} with $u_0 = Q_0$, $u = Q_n$, $(\a, p) = (\a_n, p_n)$, we get from $\| Q_n -Q_0\|_{H^1} = o_{n} (1)$ that
				\[ \|(|\cdot|^{-\a_n} * Q_n^{p_n} ) Q_n^{p_n-1} - (|\cdot|^{-(d-2)} * Q_0^{2} ) Q_0 \|_{L^\frac{2d}{d-2}} = o_{n}(1).\] 
				Then since $W^{2, \frac{2d}{d-2}}(\real^d) \hookrightarrow L^\infty(\real^d)$ when $d < 6$, 
				\eqref{diff} implies 
				\[ \| Q_n - Q_0\|_{L^\infty} \lesssim_d  \| Q_n - Q_0\|_{W^{2, \frac{2d}{d-2}}} \lesssim_d\|(|\cdot|^{-\a_n} * Q_n^{p_n} ) Q_n^{p_n-1} - (|\cdot|^{-(d-2)} * Q_0^{2} ) Q_0 \|_{L^\frac{2d}{d-2}} = o_{n}(1)  \]
				which is our desired $L^\infty$ convergence.
				
				\cu{Step 3. Proof of the claim.}
				
				Finally, we verify the claim. Decompose
				\[\int_{\real^d} \left(|\cdot|^{-\a_n}* Q_n^{p_n} \right)Q_n^{p_n -1} \phi_n dx - \int_{\real^d} \left(|\cdot|^{-(d-2)}* Q_n^{2} \right)Q \phi dx := \uppercase\expandafter{\romannumeral1}+\uppercase\expandafter{\romannumeral2} + \uppercase\expandafter{\romannumeral3} \]
				where
				\begin{align*}
					\uppercase\expandafter{\romannumeral1} &:=  \int_{\real^d} \left(|\cdot|^{-\a_n}* Q_n^{p_n} \right)(Q_n^{p_n -1} \phi_n - Q\phi) dx, \\
					\uppercase\expandafter{\romannumeral2} &:=  \int_{\real^d} \left(|\cdot|^{-\a_n}* (Q_n^{p_n} - Q^2) \right)Q\phi dx, \\						
					\uppercase\expandafter{\romannumeral3} &:=  \int_{\real^d} \left((|\cdot|^{-\a_n} - |\cdot |^{-(d-2)} )* Q^2 \right)Q \phi dx.
				\end{align*}
				Then we may restrict $\d$ to be smaller to estimate each term on the right hand side.
				
				For $\uppercase\expandafter{\romannumeral1}$,
				\begin{align*}
					|\uppercase\expandafter{\romannumeral1}| &\le \left| \int_{\real^d} \left(|\cdot|^{-\a_n}* Q_n^{p_n} \right)(Q_n^{p_n -1} - Q)\phi_n dx \right| + \left| \int_{\real^d} \left(|\cdot|^{-\a_n}* Q_n^{p_n} \right)Q (\phi_n - \phi) dx \right| \\
					& \le \||\cdot|^{-\a_n}* Q_n^{p_n}  \|_{L^\frac{2d}{d-2}} \left[ \| Q_n^{p_n - 1} - Q \|_{L^\frac{4d}{d+2}} \| \phi_n \|_{L^ \frac{4d}{d+2}} + \| Q \|_{L^\frac{4d}{d+2}} \| \phi_n - \phi \|_{L^\frac{4d}{d+2}}  \right].
				\end{align*}
				Using Hardy-Littlewood-Sobolev inequality,
				\begin{align}
					\||\cdot|^{-\a_n}* Q_n^{p_n}  \|_{L^\frac{2d}{d-2}} \lesssim_{d, \d} \| Q_n\|_{L^{\frac{2dp_n}{3d-2-2\a_n}}}^{p_n} \lesssim_{d, \d} \| Q_n \|_{H^1}^{p_n} \lesssim_{d, \d} 1  \label{convsmall}
				\end{align}
				where $\frac{2dp_n}{3d-2-2\a_n} = \frac{4d}{d+2} + o_{\d}(1)$ so that we can take $\d$ small enough such that 
				\beq \frac{2dp}{3d-2-2\a} \in \left[\frac{11}{5}, 3\right]\quad \forall (\a, p) \in [d-2-\d, d-2+\d] \times [2, 2+\d].\label{d1} \eeq 
				Also, we introduce $T \gg 1$ to estimate
				\begin{align*}
					&\,\,\,\,\,\,\,\,\| Q_n^{p_n - 1} - Q \|_{L^\frac{4d}{d+2}} \le \| Q_n^{p_n - 1} - Q_n \|_{L^\frac{4d}{d+2}} + \| Q_n - Q \|_{L^\frac{4d}{d+2}} \\
					&	 \le\| Q_n^{p_n - 2} - 1 \|_{L^{\frac{12d}{6-d}}(B_T)} \| Q_n \|_{L^3(B_T)} + 
					\left(\| Q_n\|_{L^{\frac{4d(p_n-1)}{d+2}(B_T^c)}}^{p_n-1} + \|Q_n\|_{L^\frac{4d}{d+2}(B_T^c)}\right) +  \| Q_n - Q \|_{L^\frac{4d}{d+2}}
				\end{align*}
				From the $L^\infty$ bound (\ref{Linfbound}) and almost everywhere convergence (\ref{aeconv}), the dominant convergence implies that $\| Q_n^{p_n - 2} - 1 \|_{L^{\frac{12d}{6-d}}(B_T)} = o_n(1)$ for fixed $T$. We require $\d$ sufficiently small such that  
				\beq \frac{4d(p - 1)}{d+2} \in \left[\frac{11}{5}, 3\right]\quad \forall p \in  [2, 2+\d], \label{d2}\eeq 
				so (\ref{H1conv}) ensures that $\sup_n \left(\| Q_n\|_{L^{\frac{4d(p_n-1)}{d+2}(B_T^c)}}^{p_n-1} + \|Q_n\|_{L^\frac{4d}{d+2}(B_T^c)}\right) = o_{T\rightarrow \infty}(1)$. 
				Thus 
				\beq \| Q_n^{p_n - 1} - Q \|_{L^\frac{4d}{d+2}}  = o_n (1) \label{Linfsmall} \eeq
				Combining (\ref{convsmall}), (\ref{Linfsmall}) and assumption on $\phi_n$, we arrive at 
				\[|\uppercase\expandafter{\romannumeral1}| = o_n(1). \]
				
				For $\uppercase\expandafter{\romannumeral2}$, by H\"older inequality and Hardy-Littlewood-Sobolev inequality,
				\begin{align*}
					|\uppercase\expandafter{\romannumeral2}|& \lesssim_{d, \d} \| Q^{p_n} - Q^2 \|_{L^\frac{2d}{3d-2-2\a_n}} \|Q\|_{L^\frac{4d}{d+2}} \| \phi\|_{L^\frac{4d}{d+2}} \\
					&\le \| Q^{p_n-1} - Q \|_{L^\frac{4d}{d+2}} \|Q\|_{L^\frac{4d}{5d-6-4\a_n}} \|Q\|_{L^\frac{4d}{d+2}} \| \phi\|_{L^\frac{4d}{d+2}}.
				\end{align*}
				We further require $\d \ll 1$ such that 
				\beq \frac{4d}{5d-6-4\a_n} \in \left[\frac{11}{5}, 3\right]\quad \forall \a \in [d-2-\d, d-2+\d]. \label{d3}\eeq 
				(\ref{Linfbound}) and (\ref{aeconv}) indicate $\| Q\|_{L^\infty} \lesssim_{d, \d}$ 1, so we can prove $\|Q^{p_n -1} - Q\|_{L^\frac{4d}{d+2}} = o_{n \rightarrow \infty}(1)$ as before. These estimates imply
				\[ |\uppercase\expandafter{\romannumeral2}| = o_n(1). \]
				
				Regarding $\uppercase\expandafter{\romannumeral3}$, 
				we use dominant convergence. Obviously $\left((|\cdot|^{-\a_n} - |\cdot |^{-\a} )* Q^2 \right)Q \phi \rightarrow 0$ almost everywhere when $n \rightarrow \infty$. Also, since
				\begin{align*}
					\left|	\left((|\cdot|^{-\a_n} - |\cdot |^{-(d-2)} )* Q^2 \right)Q \phi \right|\le \left((|\cdot|^{-d+2+\d} + |\cdot |^{-d+2-\d} )* Q^2 \right)Q |\phi|, 
				\end{align*}
				and 
				\[ \int_{\real^d}  \left(|\cdot |^{-d+2\pm\d} * Q^2 \right)Q |\phi|dx \lesssim_{d, \d} \| Q \|^2_{L^{\frac{4d}{d+2\mp2\d}}} \| Q \|_{L^\frac{4d}{d+2}} \| \phi\|_{L^\frac{4d}{d+2}} \lesssim_{d, \d} 1 \]
				if 
				\beq \frac{11}{5} \le \frac{4d}{d+2\mp2\d} \le 3, \label{d4} \eeq 
				$\left((|\cdot|^{-d+2+\d} + |\cdot |^{-d+2-\d} )* Q^2 \right)Q |\phi|$ is a feasible dominant function to derive 
				\[ |\uppercase\expandafter{\romannumeral3}| = o_n(1). \]
				
				To conclude, if we take $\d$ small enough such that (\ref{d1}), (\ref{d2}), (\ref{d3}) and (\ref{d4}) hold, then the claim is true and thus the proposition is proven.
			\end{proof}
			
			\subsection{Non-degeneracy}\label{s52}
			
			In this subsection, we prove Theorem \ref{thmnondeg}. From Theorem \ref{mazhao} and the translation-invariance of $L_+$, we only need to discuss radial positive solutions with $(\a, p)$ close to $(d-2, 2)$. We will denote a radial positive solution of (\ref{Choquard}) with parameters $(d, \a, p)$ by  $Q_{d, \a, p}$ or $Q_{\a, p}$ if omitting $d$ causes no trouble.\footnote{We remark that our argument here does not require uniqueness of positive solutions for $(\a, p) \neq (d-2, 2)$.} 
			
			To begin with, we decompose $L_{+, u, d, \a, p}$ as (\ref{Lplus}) to be 
			\beq L_{+, u, d, \a, p} = -\Delta + 1 - (p-1)V_{u, d, \a, p} - p \calA_{u, d, \a, p} \eeq
			where (omitting $d$ for convenience \footnote{In particular, for $Q_{\a, p}$, we may further simplify the notation to be $L_{+, Q_{\a, p}}, V_{Q_{\a, p}}$ and $\calA_{Q_{\a, p}}$.})
			\begin{align}
				V_{u, \a, p}:=& (|\cdot|^{-\a} * |u|^p) |u|^{p-2}, \\
				\calA_{u, \a, p}\xi :=& (|\cdot|^{-\a} * (|u|^{p-2} u \xi))|u|^{p-2} u. 
			\end{align}
			In Appendix \ref{appC}, we discuss properties of these operators for varying $(u, \a, p)$: boundedness, compactness and continuous dependence on $(u, \a, p)$. They lay the foundation for the perturbative argument in proving Theorem \ref{thmnondeg} and Theorem \ref{thmuni}. 
			
			Consider the kernel of $L_{+, \a, p}$. From the non-degeneracy of Choquard equation at $( \a, p) = ( d-2, 2)$ for $d \in \{3, 4, 5\}$, we have 
			\beq \text{Ker}\, L_{+, Q_{d-2, 2}} = \text{span}\left\{\partial_i Q_{d-2, 2}\right\}_{i = 1}^d \label{specnew} \eeq
			Also by differentiating (\ref{Choquard}), we have 
			\beq \text{Ker}\, L_{+, Q{\a, p}} \supseteq \text{span}\left\{\partial_i Q_{\a, p}\right\}_{i = 1}^d \label{specsup} \eeq
			for any positive solution $Q_{\a,  p}$ of \eqref{Choquard} with $(\a, p)$ satisfying (\ref{para2}). 
			We will use an argument of spectral perturbation to show the other side of (\ref{specsup})
			\beq \text{dim}\, \text{Ker}\, L_{+, Q_{\a, p}} \le \text{dim}\,\text{Ker}\, L_{+, Q_{d-2, 2}} = d\label{specsub} \eeq
			when $(\a, p)$ close to $(d-2, 2)$. 
			
			From Lemma \ref{lemcpt} and $Q_{d-2, 2} \in W^{2, r}(\real^d)$ for $r \in (1, \infty)$,  $(-\Delta + 1)^{-1} V_{Q_{d-2, 2}}$ and $(-\Delta + 1)^{-1} \calA_{Q_{d-2, 2}}$ are compact operators on $L^2(\real^d)$ and thus $L_{+, Q_{\a, p}}$ is a compact perturbation of $-\Delta + 1$, $\sigma_{ess}(L_{+, Q_{\a, p}}) = \sigma_{ess}(-\Delta + 1) = [1, \infty)$. In particular, $0$ is an isolated eigenvalue of $L_{+, Q_{d-2, 2}}$. So we can define the Riesz projection
			\beq P_{0, Q_{d-2, 2}} := \frac{1}{2\pi i} \oint_{\partial D_r} (L_{+, Q_{d-2, 2}} - z)^{-1} dz, \eeq
			where $D_r := \{z \in \cpx: |z| < r \}$ and $r$ sufficiently small such that $\text{Im}\, P_{0, Q_{d-2, 2}} = \text{Ker}\, L_{+, Q_{d-2, 2}}$.
			
			Notice that 
			\begin{align*}
				L_{+, Q_{\a, p}} - L_{+, Q_{d-2, 2}} = -(p-1)(V_{Q_{\a, p}} - V_{Q_{d-2, 2}}) - p (\calA_{Q_{\a, p}} - \calA_{Q_{d-2, 2}}) -(p-2)(V_{Q_{d-2, 2}} + \calA_{Q_{d-2, 2}}).
			\end{align*}			
			Lemma \ref{lemcpt}, Lemma \ref{lemdiff} and the $H^1 \cap L^\infty$ approximation from Proposition \ref{propasym} imply
			\beq \| L_{+, Q_{\a, p}} - L_{+, Q_{d-2, 2}} \|_{L^2\rightarrow L^2} = o_{(\a, p)\rightarrow (d-2, 2)} (1). \eeq
			Thus for the $r$ taken as above and $\d_1 \le \d$ small enough, we have
			\[ \| (L_{+, Q_{\a, p}} - z)^{-1} \|_{L^2\rightarrow L^2} \le 2  \| (L_{+, Q_{d-2, 2}} - z)^{-1} \|_{L^2\rightarrow L^2} \]
			when $(\a, p) \in [d-2-\d_1, d-2+\d_1] \times [2, 2+\d_1]$ and thereafter
			\[ \| (L_{+, Q_{\a, p}} - z)^{-1} - (L_{+, Q_{d-2, 2}} - z)^{-1} \|_{L^2\rightarrow L^2} = o_{(\a, p)\rightarrow (d-2, 2)} (1).  \]
			So for such $(\a, p)$,
			\[ P_{0, Q_{\a, p}} :=  \frac{1}{2\pi i}  \oint_{\partial D_r} (L_{+, Q_{d-2, 2}} - z)^{-1} dz \]
			is a bounded operator on $L^2$ and 
			\[ \| P_{0, Q_{\a, p}} - P_{0, Q_{d-2, 2}} \|_{L^2\rightarrow L^2} =  o_{(\a, p)\rightarrow (d-2, 2)} (1). \]
			Note that $P_{0, Q_{d-2, 2}}$ is a Fredholm operator as a finite-rank projection. Via the theory of perturbation of Fredholm operators, there exists a $\d_2 \le \d_1$ such that (\ref{specsub}) holds for $(\a, p) \in [d-2-\d_2, d-2+\d_2], \times[2, 2+\d_2]$. This and (\ref{specsup}) concludes the proof of Theorem \ref{thmnondeg}.	    
			
			\subsection{Uniqueness} \label{s53} In this subsection, we prove Theorem \ref{thmuni}. We start with defining 
			\beq \mathbb{X}_d := L^2_{rad}(\real^d) \cap L^{10}_{rad}(\real^d),\quad d \in \{ 3, 4, 5\}. \eeq
			Note that $H^2(\real^d) \hookrightarrow \mathbb{X}_d$ for such $d$. 
			
			Now we can state and prove a local uniqueness result.
			
			\begin{prop}\label{proplocuni}
				For $d \in \{ 3, 4, 5\}.$ Let $Q_{d-2, 2}$ be the unique radial positive solution for $(\a, p) = (d-2, 2)$ of (\ref{Choquard}). Then there exist $\d_1 > 0$ and a $C^0$ map $\tilde{Q}: [d-2-\d_1, d-2+\d_1] \times [2, 2+\d_1] \rightarrow \mathbb{X}$ such that the following holds, where we denote $\tilde{Q}_{\a, p} := \tilde{Q}(\a, p)$.
				\begin{enumerate}[(1)]
					\item $\tilde{Q}_{\a, p}$ is an $H^1$ radial solution of (\ref{Choquard}) with parameters $(d, \a, p)$. 
					\item There exists $\e > 0$ such that $\tilde{Q}_{\a, p}$ is the unique $H^1$ radial solution of (\ref{Choquard}) with parameters $(d, \a, p)$ in the neighborhood $\{ u \in \mathbb{X}_d: \| u - Q_{d-2, 2}\|_{\mathbb X_d} \le \e \}$. In particular, $\tilde{Q}_{d-2, 2} = Q_{d-2, 2}$. 
				\end{enumerate}  
			\end{prop}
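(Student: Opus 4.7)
The plan is to realize radial $H^1$ solutions of (\ref{Choquard}) as zeros of a nonlinear map on $\mathbb{X}_d$ and invoke the refined implicit function theorem from the appendix, which accommodates the low regularity of the nonlinearity in $p$. Define
\[ F(u, \a, p) := u - (-\Delta + 1)^{-1}\bigl[(|\cdot|^{-\a} * |u|^p)|u|^{p-2}u\bigr] \]
on an open neighborhood of $(Q_{d-2,2}, d-2, 2)$ in $\mathbb{X}_d \times \real^2$. Hardy--Littlewood--Sobolev estimates together with $H^2_{rad}(\real^d) \hookrightarrow \mathbb{X}_d$ (valid for $d\in\{3,4,5\}$ thanks to the critical Sobolev exponent $\tfrac{2\cdot 5}{5-4}=10$) make $F$ well-defined and continuous into $\mathbb{X}_d$, and $F(u,\a,p)=0$ is equivalent (after the bootstrap of Theorem \ref{thmprop}(2)) to $u$ being a radial $H^1$ solution of (\ref{Choquard}) with parameters $(\a,p)$.

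Next I would verify that $D_u F$ at the reference point is a Banach space isomorphism on $\mathbb{X}_d$. A direct computation gives $D_u F|_{(Q_{d-2,2}, d-2, 2)} = \mathrm{Id} - (-\Delta + 1)^{-1}(V_{Q_{d-2,2}} + \calA_{Q_{d-2,2}})$, which by Lemma \ref{lemcpt} is a compact perturbation of identity, hence Fredholm of index zero. Its kernel equals $\mathrm{Ker}\, L_{+, Q_{d-2,2}} \cap \mathbb{X}_d$; by the non-degeneracy (\ref{specnew}) the full kernel in $L^2$ is $\mathrm{span}\{\partial_i Q_{d-2,2}\}_{i=1}^d$, each of which is odd in $x_i$ and thus non-radial, so the intersection with $\mathbb{X}_d$ is trivial and $D_u F$ is invertible at the reference point. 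This is precisely where the non-degeneracy input is consumed.

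The main obstacle is that $F$ is not $C^1$ on any full neighborhood of $(Q_{d-2,2}, d-2, 2)$: varying $p$ near $2$ against the factor $|u|^{p-2}$ produces $\log|u|$ factors upon differentiation that blow up at zeros of $u$, and although $Q_{d-2,2}$ is strictly positive pointwise, arbitrary functions in a small $\mathbb{X}_d$-ball around it may vanish or change sign. This is exactly the content of Lemma \ref{lemregf}, which yields only $C^1$-differentiability \emph{at} the reference point, so the classical implicit function theorem does not apply.

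To circumvent this I would invoke the refined implicit function theorem in the appendix, whose hypotheses only demand continuity of $F$, invertibility of $D_u F$ at the single reference point, and a remainder estimate of the form
\[ \|F(u,\a,p) - D_u F|_{(Q_{d-2,2},d-2,2)}(u-Q_{d-2,2})\|_{\mathbb{X}_d} = o(\|u - Q_{d-2,2}\|_{\mathbb{X}_d}) + o_{(\a,p)\to(d-2,2)}(1). \]
This residual bound follows from the continuity and difference bounds for $V$ and $\calA$ in Lemma \ref{lemdiff} combined with the continuity of the Riesz kernel $|\cdot|^{-\a}$ in $\a$. The refined theorem then produces the desired $\d_1, \e > 0$ and continuous map $\tilde{Q}$ on $[d-2-\d_1, d-2+\d_1] \times [2, 2+\d_1]$ solving $F(\tilde{Q}_{\a,p}, \a, p) = 0$ with $\tilde{Q}_{d-2,2}=Q_{d-2,2}$ and unique in the $\e$-ball around $Q_{d-2,2}$, yielding (1) (the $H^1$ regularity comes from the $H^2_{rad}$ range of $(-\Delta+1)^{-1}$) and (2).
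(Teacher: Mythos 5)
Your proposal follows essentially the same route as the paper: the same map $F$ on $\mathbb{X}_d$, the same verification that $\partial_u F(Q_{d-2,2},d-2,2)$ is invertible via compactness (Lemma \ref{lemcpt}), Fredholm theory and the radial non-degeneracy, and the same appeal to the refined implicit function theorem to get around the failure of $C^1$ regularity in $p$. The only cosmetic discrepancy is that the paper's Proposition \ref{propift} hypothesizes differentiability of $F$ in $u$ on a full neighborhood together with continuity of $\partial_u F$ at the single reference point (both supplied by Lemma \ref{lemregf}, not Lemma \ref{lemdiff}), rather than the one-point remainder estimate you wrote down, but the substance of the argument is identical.
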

			\begin{proof}
				For $d \in \{ 3, 4, 5\}$ and $(d, \a, p)$ satifying (\ref{para2}), it's easy to see $u$ is an $H^1$ solution of (\ref{Choquard})if and only if $u \in \mathbb{X}_d$ is a solution of
				\beq u - (-\Delta + 1)^{-1} \left[ (|\cdot|^{-\a} * |u|^p) |u|^{p-2} u\right] = 0. \label{Choquard2} \eeq
				Define $F: \mathbb{X}_d \times [d-2-\d_1, d-2+\d_1] \times [2, 2+\d_1] \rightarrow \mathbb{X}_d$ by 
				\[ F(u, \a, p) = u - (-\Delta + 1)^{-1} (|\cdot|^{-\a} * |u|^p) |u|^{p-2} u, \]  
				where $\d_1$ is small enough and to be determined. First we require $\d_1$ to be smaller than the $\d$ in Lemma \ref{lemregf}, then $F$ is well-defined, continuous and differentiable w.r.t. $u$, and 
				\beq \partial_u F (u, \a, p) = \mathrm{Id} - (-\Delta + 1)^{-1} \left[ (p-1) V_{u, \a, p} + p \calA_{u, \a, p} \right] = (-\Delta + 1)^{-1} L_{+, u, \a, p} \label{Fu}\eeq
				is continuous at $(Q_{d-2, 2}, d-2, 2)$. If $\partial_u F(Q_{d-2, 2}, d-2, 2)$ is invertible in $L(\mathbb{X}_d)$, then these facts enable us to apply the implicit function theorem Proposition \ref{propift}.  The assertions (1) and (2) follow directly from its conclusion and the above equivalence. 
				
				To conclude the proof, we check the invertibility of $\partial_u F$ at $(Q_{d-2, 2}, d-2, 2)$. Again from Lemma \ref{lemcpt} and regularity of $Q_{d-2, 2}$, $(-\Delta + 1)^{-1} V_{Q_{d-2, 2}}$ and $(-\Delta + 1)^{-1} \calA_{Q_{d-2, 2}}$ are compact operators on $L^2(\real^d)$ and $L^{10}(\real^d)$ respectively, and thus also compact on $\mathbb{X}_d$. This indicates that $\partial_u F (Q_{d-2, 2}, d-2, 2)$ is a Fredholm operator on $\mathbb{X}_d$ by \eqref{Fu}. On the other hand, non-degeneracy of $L_{+, Q_{d-2, 2}}$ indicates that 
				\[ \mathrm{Ker}\,L_{+, Q_{d-2, 2}}\big|_{L^2_{rad}(\real^d)} = \{ 0\}, \]
				which implies that $\partial_u F(Q_{d-2, 2}, d-2, 2)$ is injective on $\mathbb{X}_d$. Properties of Fredholm operators show that $\partial_u F(Q_{d-2,2},d-2, 2)$ is also bijective and therefore has a bounded inverse. That finishes the proof.
			\end{proof}
			
			Now we are in place to prove Theorem \ref{thmuni}.
			
			\begin{proof}[Proof of Theorem \ref{thmuni}]From Theorem \ref{mazhao}, we only need to show the uniqueness of the $H^1$ radial positive solution. 
				Given $d \in \{ 3, 4, 5\}$, take $\d_1$ and $\e$ as in Proposition \ref{proplocuni}. Proposition \ref{propasym} indicates that there exists $\d_2 > 0$ such that any $H^1$ radial positive solution $Q_{\a, p}$ for (\ref{Choquard}) with parameters $(\a, p) \in [d-2-\d_2, d-2+\d_2] \times [2, 2+\d_2]$ satisfying
				\beq \| Q_{\a, p} - Q_{d-2, 2}\|_{\mathbb{X}} \le \e. \label{close} \eeq
				Now taking $\d = \min\{\d_1, \d_2\}$, for every $(\a, p) \in [d-2-\d, d-2+\d] \times [2, 2+\d]$, Theorem \ref{thmprop} (1) indicates that there exists an $H^1$ radial positive solution $Q_{\a, p}$ satisfying \eqref{close}, which, by Proposition \ref{proplocuni}, must equal to $\tilde{Q}_{\a, p}$ and be unique in this neighborhood. Since Proposition \ref{propasym} also guarantees the non-existence of $H^1$ radial positive solutions outside this neighborhood, $Q_{\a, p}$ is exactly the unique $H^1$ radial positive solution for (\ref{Choquard}).
			\end{proof}
			
			\cu{Acknowledgement.} The author thanks Baoping Liu and Tao Zhou for many helpful discussions. A large part of this work was done during the author's staying in Peking University, which was supported by the the NSF of China (No. 12071010, 11631002). Also, special thanks go to to some anonymous passenger of Beijing Subway Line 2 who returned the author's lost backpack in July 2021. Without him this work would have suffered a lot.
			
			\appendix
			\section{A computational lemma}
			
			We derive the following lemma estimating the integration of exponential function multiplied by a polynomial. 
			\begin{lem} \label{lemexp}For $\a \in \real$, $\beta \ge \frac{1}{2}$ and $R \ge 1$, we define
				\beq I(R;\a, \beta) := \int_R^\infty  r^{-\a} e^{-\beta r} dr. \eeq 
				Then we have 
				\beq I(R;\a, \beta) \sim_{\a, \beta} R^{-\a} e^{-\beta R}, \label{expest}\eeq
				and the constant can be taken uniformly for $(\a, \b) $ in a compact subset of $\real \times [\frac{1}{2}, \infty)$. 
			\end{lem}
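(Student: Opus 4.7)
The plan is to reduce the claim to a uniform bound on a normalized integral by substituting $r = R + s$. Factoring out $e^{-\beta R}$, one obtains
\[
I(R;\alpha,\beta) = R^{-\alpha} e^{-\beta R} \int_0^\infty \left(1 + \frac{s}{R}\right)^{-\alpha} e^{-\beta s}\,ds =: R^{-\alpha} e^{-\beta R}\, J(R;\alpha,\beta),
\]
so the lemma reduces to showing $J(R;\alpha,\beta) \sim_{\alpha,\beta} 1$ for $R \geq 1$ and $\beta \geq 1/2$, with constants controllable on compact subsets of $\mathbb{R}\times[1/2,\infty)$.

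For the upper bound I will split on the sign of $\alpha$. If $\alpha \geq 0$, then $(1+s/R)^{-\alpha} \leq 1$, so $J \leq 1/\beta \leq 2$. If $\alpha < 0$, using $R\geq 1$ we have $(1+s/R)^{|\alpha|} \leq (1+s)^{|\alpha|}$, hence
\[
J(R;\alpha,\beta) \leq \int_0^\infty (1+s)^{|\alpha|} e^{-\beta s}\,ds,
\]
which is finite and depends continuously on $(\alpha,\beta)$, so is bounded on any compact subset.

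For the lower bound I will restrict to $s \in [0,1]$, where $1 \leq 1+s/R \leq 2$ (using $R \geq 1$), so $(1+s/R)^{-\alpha} \geq \min(1,2^{-\alpha})$. Thus
\[
J(R;\alpha,\beta) \geq \min(1,2^{-\alpha}) \int_0^1 e^{-\beta s}\,ds = \min(1,2^{-\alpha})\,\frac{1-e^{-\beta}}{\beta},
\]
and the right-hand side is bounded below by a positive constant depending continuously on $(\alpha,\beta)$ for $\beta \geq 1/2$, hence uniform on compact subsets.

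There is no genuine obstacle here; the only subtlety is making sure the $\alpha < 0$ case is handled separately so that the tail growth of $r^{-\alpha}$ does not spoil the exponential factor, which is exactly where the hypothesis $R \geq 1$ is used. Combining the two bounds gives \eqref{expest} with the stated uniformity.
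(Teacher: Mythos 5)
Your proof is correct, and it takes a genuinely different and more elementary route than the paper's. The paper first rescales via $t = \beta r$ to reduce to $\beta = 1$, then argues by cases on the sign of $\alpha$: the upper bound for $\alpha \ge 0$ and the lower bound for $\alpha \le 0$ are immediate, but the lower bound for $\alpha > 0$ invokes a classical inequality for the exponential integral $I(R;n,1)$ with integer $n$ (Abramowitz--Stegun) together with an interpolation step for non-integer $\alpha$, and the upper bound for $\alpha < 0$ is obtained by iterating the integration-by-parts identity $I(R;\alpha,1) = R^{-\alpha}e^{-R} - \alpha I(R;\alpha+1,1)$ finitely many times. Your substitution $r = R+s$ collapses all of this into pointwise bounds on $(1+s/R)^{-\alpha}$: the normalized integral $J$ is controlled above by $\int_0^\infty (1+s)^{|\alpha|}e^{-\beta s}\,ds$ and below by restricting to $s\in[0,1]$, both manifestly continuous in $(\alpha,\beta)$ and hence uniform on compacts. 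This avoids the external citation and the iteration entirely, and is self-contained; the paper's version has the minor advantage of producing explicit constants such as $(2n+1)^{-1}$ in the lower bound, but for the purposes of the lemma (uniformity on compact parameter sets) your argument delivers exactly what is needed.
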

			\begin{proof}
				By changing of variables $t := \beta r$,
				\[ I(R; \a, \b) = \int_{\beta R}^\infty (\beta^{-1}r)^{-\a} e^{-t} \beta^{-1}dt = \beta^{\a-1} I(\b R; \a, 1).   \]
				Thus we only need to prove (\ref{expest}) for $\b = 1$ and $R \ge \frac{1}{2}$. 
				
				The upper bound for $\a \ge 0$ comes easily as
				\[ I(R; \a, 1) \le R^{-\a} \int_R^\infty e^{-r}dr = R^{-\a}e^{-R}. \]
				Similarly, the lower bound for $\a \le 0$ holds.  
				
				As for the lower bound for $\a  > 0$, it's a classical result for $\a = n \in \posint_{+}$ (refer to \cite[5.1.19]{abramowitz1964handbook}) 
				\[ I(R; n, 1) \ge e^{-R}R^{-n + 1} \frac{1}{R+n} \ge \frac{1}{2n+1} R^{-n}e^{-R},\quad R \ge \frac{1}{2},\,\, n= 1, 2, 3\ldots \]
				And for the interrange case $\a \in (n-1, n)$ with $n \in \posint_+$, 
				\[ I(R; \a, 1) \ge \int_R^\infty R^{n-\a}r^{-n}e^{-r} dr \ge \frac{1}{2\a+3} R^{-\a} e^{-R}. \]
				
				Finally, we check the upper bound for $\a < 0$ case. Using integration by parts
				\[
				I(R; \a, 1) = R^{-\a}e^{-R} - \a I(R; \a+1, 1), \quad \a \neq 0, \]
				$I(R; \a, 1)$ with $-\a \in (n-1, n]$ can be bounded within $n$ times of iterations
				\begin{align*} 
					I(R; \a, 1) &= R^{-\a}e^{-R} + \sum_{k=1}^{n-1} \left[\prod_{j=1}^k (-\a -1 + j)\right] R^{-\a - k}e^{-R} \\
					&\,\,\,\,+ \left[\prod_{j=1}^n (-\a -1 + j)\right] I(R; \a + n, 1) \\
					&\lesssim_{\a} R^{-\a} e^{-R}. 
				\end{align*}
				The final inequality utilizes the upper bound for $\a \ge 0$ case and $R \ge \frac{1}{2}$.
			\end{proof}
			
			\section{A refined implicit function theorem}
			
			We carefully check the proof of the classical implicit function theorem in Banach space (see for example \cite[Theorem 1.2.1]{chang2006methods}) to slightly relax the $C^1$ condition to $C^1$ at one point. It will be used in proving Proposition \ref{proplocuni}. In this subsection, we use $B_r(x)$, $\bar{B}_r(x)$ to denote open and closed balls respectively in general Banach spaces.
			
			\begin{prop}\label{propift}
				Let $X, Y, Z$ be Banach spaces, $U \subset X \times Y$ be an open set. Suppose that $f \in C(\bar{U}, Z)$ is differentiable w.r.t. $y$. For a point $(x_0, y_0) \in \bar{U}$, if $f_y: \bar{U} \rightarrow L(Y, Z)$ is continuous at $(x_0, y_0)$ and
				\begin{align*}
					f(x_0, y_0) &= 0,\\
					f_y^{-1}(x_0, y_0) &\in L(Z, Y),
				\end{align*}
				then there exist $r, r_1 > 0$ and a $C^0$ map $u: \bar{B}_r(x_0) \rightarrow \bar{B}_{r_1}(y_0)$, such that 
				\[ \left\{\begin{array}{l} 
					\bar{B}_r(x_0) \times \bar{B}_{r_1}(y_0) \subset \bar{U},\\
					u(x_0) = y_0,  \\
					f(x, u(x)) = 0 \quad \forall \,\, x \in \bar{B}_r(x_0),
				\end{array}\right. \]
				Furthermore, if $f(x, y) = 0$ for some $(x, y) \in \bar{B}_r(x_0) \times \bar{B}_{r_1}(y_0)$, then $y = u(x)$. 
			\end{prop}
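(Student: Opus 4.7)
\textbf{Proof plan for Proposition \ref{propift}.}

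The plan is to follow the classical Banach fixed point proof of the implicit function theorem, while noting carefully that every place where the standard argument uses continuity of $f_y$ on a neighborhood can be replaced by continuity of $f_y$ only at $(x_0,y_0)$, combined with continuity of $f$ on $\bar U$ and differentiability of $f$ in $y$. Set $A := f_y(x_0,y_0)$, which is invertible in $L(Y,Z)$ by hypothesis, and define the auxiliary map
\[
T(x,y) := y - A^{-1} f(x,y),
\]
so that $y$ solves $f(x,y)=0$ if and only if $y$ is a fixed point of $T(x,\cdot)$.

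First I would show that for $r_1$ small, $T(x,\cdot)$ is a contraction on $\bar B_{r_1}(y_0)$ uniformly in $x$ near $x_0$. For fixed $x$ the map $g_x(y) := T(x,y) = y - A^{-1}f(x,y)$ is differentiable in $y$ with derivative $\mathrm{Id} - A^{-1} f_y(x,y) = A^{-1}\bigl(A - f_y(x,y)\bigr)$. By continuity of $f_y$ \emph{at} $(x_0,y_0)$, given $\e = (2\|A^{-1}\|)^{-1}$ there is $\eta>0$ so that $\|f_y(x,y) - A\| \le \e$ whenever $\|x-x_0\|,\|y-y_0\| \le \eta$. Applying the mean value inequality in Banach spaces to $g_x$ along the segment from $y_1$ to $y_2$ (which needs only differentiability of $g_x$, not continuity of $g_x'$) yields, for any $y_1,y_2 \in \bar B_\eta(y_0)$ and $\|x-x_0\|\le\eta$,
\[
\|T(x,y_1) - T(x,y_2)\| \;\le\; \sup_{t\in[0,1]} \|A^{-1}(A - f_y(x, y_2 + t(y_1-y_2)))\|\,\|y_1-y_2\| \;\le\; \tfrac12\|y_1-y_2\|.
\]

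Next I would arrange the self-map property. Since $f(x_0,y_0)=0$ and $f \in C(\bar U,Z)$, by shrinking to some $r \in (0,\eta]$ we can guarantee $\|f(x,y_0)\| \le \eta/(2\|A^{-1}\|)$ for $\|x-x_0\|\le r$, so $\|T(x,y_0) - y_0\| \le \eta/2$. Combined with the contraction estimate, $T(x,\cdot)$ maps $\bar B_{r_1}(y_0)$ into itself for $r_1 := \eta$ and every $x \in \bar B_r(x_0)$, provided we first shrink $r$ so that $\bar B_r(x_0)\times\bar B_{r_1}(y_0) \subset \bar U$. The Banach fixed point theorem then produces a unique $u(x) \in \bar B_{r_1}(y_0)$ with $T(x,u(x))=u(x)$, i.e. $f(x,u(x))=0$; uniqueness inside the box, and the identity $u(x_0)=y_0$, are immediate.

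The final step is continuity of $u$. Writing $u(x_1)-u(x_2) = T(x_1,u(x_1)) - T(x_2,u(x_2))$, splitting via $T(x_1,u(x_2))$, and using the contraction bound gives
\[
\|u(x_1)-u(x_2)\| \;\le\; \tfrac12\|u(x_1)-u(x_2)\| + \|A^{-1}\|\,\|f(x_1,u(x_2)) - f(x_2,u(x_2))\|,
\]
so $\|u(x_1)-u(x_2)\| \le 2\|A^{-1}\|\,\|f(x_1,u(x_2))-f(x_2,u(x_2))\|$, which tends to $0$ as $x_1\to x_2$ by continuity of $f$ on $\bar U$. I do not expect a real obstacle here: the only subtle point, which is the actual raison d'\^etre of this refined version, is that continuity of $f_y$ at the single base point $(x_0,y_0)$ suffices for the contraction estimate, because that estimate is controlled by $\sup_{(x,y)\in \bar B_\eta(x_0)\times \bar B_\eta(y_0)}\|f_y(x,y)-A\|$, which is precisely what continuity-at-a-point gives; this is what allows Proposition \ref{proplocuni} to be applied despite $L_{+, u}$ failing to be $C^1$ in $u$ near $Q_{d-2,2}$.
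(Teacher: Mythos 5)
Your proof is correct and follows essentially the same route as the paper's: both set up the auxiliary map $T(x,y)=y-f_y(x_0,y_0)^{-1}f(x,y)$ (the paper merely translates coordinates to $(0,0)$ first), establish the contraction property from continuity of $f_y$ at the single point $(x_0,y_0)$, secure the self-map by shrinking $r$ using continuity of $f$, apply Banach's fixed point theorem, and derive continuity of $u$ from the contraction estimate together with continuity of $f$. Your phrasing via the mean value inequality (rather than the paper's integral identity) is if anything the cleaner justification, since it explicitly needs only differentiability of $g_x$ and not continuity of its derivative.
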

			\begin{proof}
				Consider 
				\[ g(x, y):= f_y^{-1}(x_0, y_0) \circ f(x+x_0, y+y_0).\]
				We look for the solution $y = u(x) \in \bar{B}_{r_1}(0)$ of $g(x, y) = 0$ for $x \in \bar{B}_r(0)$, with $r, r_1$ small enough and determined later. Define 
				\[ R(x, y):= y - g(x, y),\]
				and then we will check that $R(x, \cdot)$ for $x\in \bar{B}_r(0)$ is a contraction mapping on $\bar{B}_{r_1}(0)$.
				
				Firstly, for any $x \in \bar{B}_r(0)$ and $y_1, y_2 \in \bar{B}_{r_1}(0)$, we have
				\begin{align*} &\| R(x, y_1) - R(x, y_2) \|_Y = \| y_1 - y_2 - [g(x, y_1) - g(x, y_2)] \|_Y \\ 
					\le& \int_0^1 \| \text{id}_{Y} - f_y^{-1}(x_0, y_0) \circ f_y (x + x_0, ty_1 + (1-t)y_2 + y_0) \|_{L(Y, Y)} dt \cdot \| y_1 - y_2 \|_Y 
				\end{align*}
				From the continuity of $f_y$ at $(x_0, y_0)$, there exist $r, r_1 \ll 1$ such that for $(x, y) \in \bar{B}_r(x_0) \times \bar{B}_{r_1}(y_0) \subset \bar{U}$,  
				\[       	\| f_y(x, y) - f_y(x_0, y_0) \|_{L(Y, Z)} \le \frac{1}{2}\|f_y^{-1}(x_0, y_0)\|_{L(Z, Y)}^{-1}.\]
				This leads to 
				\beq  \| R(x, y_1) - R(x, y_2) \|_Y  \le \frac{1}{2} \| y_1 - y_2 \|_Y.\label{contract}\eeq
				
				Secondly, we verify $R(x, \cdot): \bar{B}_{r_1} (0) \rightarrow \bar{B}_{r_1}(0)$. Indeed, note that
				\begin{align*}
					\| R(x, y)\|_{Y} &\le \|R(x, 0)\|_Y + \| R(x, y) - R(x, 0) \|_Y \\
					&\le \| f_y^{-1}(x_0, y_0) \|_{L(Z, Y)} \| f(x+x_0, y_0) \|_Z + \frac{1}{2} \| y \|_{Y}. 
				\end{align*}
				Using continuity of $f$, we shrink $r$ further to guarantee 
				\[ \sup_{x \in \bar{B}_r(0)}\| f(x+x_0, y_0) \|_Z \le \frac{r_1}{2} \| f_y^{-1}(x_0, y_0) \|_{L(Z, Y)}^{-1}, \]
				which leads to $R(x, y) \le r_1$ for $(x, y ) \in \bar{B}_{r}(0) \times \bar{B}_{r_1}(0)$. 
				
				Thus $R$ is a contraction and for all $x \in \bar{B}_r(0)$, there exists a unique $y \in \bar{B}_{r_1}(0)$ satisfying $g(x, y) = 0$, namely $f(x+x_0, y+y_0) = 0$. We denote by $v(x)$ the solution $y$, and $u(x) := v(x-x_0) + y_0$. 
				
				Finally we prove the continuity of $u: \bar{B}_r(x_0) \rightarrow \bar{B}_{r_1}(y_0)$, which is equivalent to that of $v: \bar{B}_r(0) \rightarrow \bar{B}_{r_1}(0)$. Using \eqref{contract}, for $x, x' \in \bar{B_r}(0)$
				\begin{align*}
					\| v(x) - v(x')\|_Y &= \| R(x, u(x)) - R(x', u(x')) \|_Y \\
					&	\le \frac{1}{2} \| v(x) - v(x') \|_Y + \| R(x, u(x)) - R(x', u(x)) \|_Y.
				\end{align*}
				So we obtain
				\[ 	\| v(x) - v(x')\|_Y \le 2 \| R(x, u(x)) - R(x', u(x)) \|_Y. \]
				The continuity of $R$ implies that of $v$. 
			\end{proof}

			\section{Properties of $V, \calA$ and Regularity of $F$} \label{appC}
			
			In this section, we discuss boundedness, compactness and continuous dependence of linear operators $V_{u, d, \a, p}$ and $\calA_{u, d, \a, p}$. They are related to the linearized operator $L_{+, Q_{\a, p}, \a, p}$ for non-degeneracy \S \ref{s52}, the $\mathbb X_d$-valued function $F$ in \S \ref{s53} and an improvement of regularity argument in \S \ref{s51}. We recall their definitions from \S \ref{s52}:
			\begin{align*}
				V_{u, d, \a, p}:=& (|\cdot|^{-\a} * |u|^p) |u|^{p-2}, \\
				\calA_{u, d, \a, p}\xi :=& (|\cdot|^{-\a} * (|u|^{p-2} u \xi))|u|^{p-2} u.
			\end{align*}
			
			For simplicity of notation, we denote the region of the $(\a, p)$ we consider to be 
			\beq \Omega_{d, \d}:=[d-2-\d, d-2 + \d] \times [2, 2+\d] \eeq with $\delta \ll 1$. Obviously $(\a, p) \in \Omega_{d, \d}$ satisfies \eqref{para2}. 
			
			One estimate (and its perturbation) will be frequently used 
			\beq \| (|\cdot|^{-{d-2}} * f)g \|_{L^2} \lesssim_{d} \| f \|_{L^\frac{2d}{d-2}} \| g \|_{L^d}. \label{2dd} \eeq
			
			\subsection{For $u \in \mathbb{X}_d$} 
			
			First we consider $u \in \mathbb{X}_d := L^2_{rad}(\real^d) \cap L^{10}_{rad}(\real^d)$ for $d \in \{ 3, 4, 5\}$.
			
			\begin{lem}\label{lembdd}For $d \in \{3, 4, 5\}$, $(\a, p) \in \Omega_{d, \d}$ and $\d \ll 1$. If $u \in\mathbb{X}_d$, $V_{u, \a, p}$ and $\calA_{u, \a, p}$ are bounded from $\mathbb{X}_d$ to $L^2_{rad}$.  
			\end{lem}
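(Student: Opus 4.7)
The lemma is essentially a careful application of Hardy–Littlewood–Sobolev and Hölder's inequality, exploiting that by interpolation $\mathbb{X}_d \hookrightarrow L^q_{rad}(\real^d)$ for every $q \in [2, 10]$. My plan is to treat $V$ and $\calA$ in parallel: peel off the convolution by HLS, bound the remaining factors by Hölder, and then verify that admissible Lebesgue exponents exist at the reference point $(\a,p)=(d-2,2)$ for each $d \in \{3,4,5\}$. Because these exponents depend continuously on $(\a,p)$, the same admissible window will persist for all $(\a,p)\in\Omega_{d,\d}$ once $\d$ is taken small enough, giving a constant uniform on the compact set $\Omega_{d,\d}$.

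Concretely, for $V_{u,\a,p}\xi = (|\cdot|^{-\a}\ast |u|^p)\,|u|^{p-2}\xi$, I would first split by Hölder
\[
\|V_{u,\a,p}\xi\|_{L^2}\le \bigl\||\cdot|^{-\a}\ast|u|^p\bigr\|_{L^{q_1}}\;\bigl\||u|^{p-2}\xi\bigr\|_{L^{q_2}},\qquad \tfrac{1}{q_1}+\tfrac{1}{q_2}=\tfrac{1}{2},
\]
then apply HLS to obtain $\|\,|\cdot|^{-\a}\ast|u|^p\|_{L^{q_1}}\lesssim \|u\|_{L^{p q_3}}^p$ with $\tfrac{1}{q_1}=\tfrac{1}{q_3}-\tfrac{d-\a}{d}$, and Hölder again to get $\||u|^{p-2}\xi\|_{L^{q_2}}\le \|u\|_{L^{q_4}}^{p-2}\|\xi\|_{L^{q_5}}$ with $\tfrac{p-2}{q_4}+\tfrac{1}{q_5}=\tfrac{1}{q_2}$. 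The task is to choose $(q_3, q_4, q_5)$ so that $pq_3, q_4, q_5$ all lie strictly inside $(2,10)$ while respecting the non-endpoint HLS restriction $1<q_3<d/(d-\a)$. At $(\a,p)=(d-2,2)$ the factor $|u|^{p-2}$ is constant and an explicit check does the job, e.g.\ $(q_3, q_5) = (6/5,3)$ for $d=3$, $(4/3,4)$ for $d=4$, $(5/3,10/3)$ for $d=5$. Each lies strictly interior to the admissible region, so by continuity the same choice (slightly perturbed to absorb the $|u|^{p-2}$ factor when $p>2$) remains valid throughout $\Omega_{d,\d}$ for $\d$ small, yielding $\|V_{u,\a,p}\xi\|_{L^2}\lesssim_{d,\d}\|u\|_{\mathbb{X}_d}^{2p-2}\,\|\xi\|_{\mathbb{X}_d}$.

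The argument for $\calA_{u,\a,p}\xi = \bigl(|\cdot|^{-\a}\ast(|u|^{p-2}u\xi)\bigr)|u|^{p-2}u$ is entirely analogous: split
\[
\|\calA_{u,\a,p}\xi\|_{L^2}\le \bigl\||\cdot|^{-\a}\ast(|u|^{p-1}\xi)\bigr\|_{L^{b_1}}\|u\|_{L^{(p-1)b_2}}^{p-1},\qquad \tfrac{1}{b_1}+\tfrac{1}{b_2}=\tfrac{1}{2},
\]
apply HLS to the first factor with exponent $b_3$ satisfying $\tfrac{1}{b_1}=\tfrac{1}{b_3}-\tfrac{d-\a}{d}$, and then decompose $\||u|^{p-1}\xi\|_{L^{b_3}}$ by Hölder into Lebesgue norms of $u$ and $\xi$ lying in $[2,10]$. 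Taking for instance $(b_1,b_2)=(4,4)$ at the reference parameter produces valid interior exponents in every $d\in\{3,4,5\}$, and the same continuity-in-parameters argument extends the bound to $\Omega_{d,\d}$.

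The only real technical care is navigating the strict HLS admissibility $1<q_3<d/(d-\a)$ (respectively $1 < b_3 < \infty$) simultaneously with the $[2,10]$ constraint coming from $\mathbb{X}_d$; one must verify that these together leave a nonempty open interval for $q_3$ (respectively $b_1$) at the reference point in each dimension, which is why the calculation is done by hand for $d\in\{3,4,5\}$ rather than argued abstractly. Once this strictness is established, all resulting constants depend continuously on $(\a, p)$ and the bound is uniform on $\Omega_{d,\d}$ for $\d$ small enough.
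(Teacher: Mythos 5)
Your proposal is correct and follows essentially the same route as the paper: a Hölder split of the $L^2$ norm, Hardy–Littlewood–Sobolev on the convolution, a second Hölder for the remaining powers of $u$, with all exponents landing in the interpolation window $[2,10]$ of $\mathbb{X}_d$ and uniformity over $\Omega_{d,\d}$ obtained by continuity of the exponents in $(\a,p)$. The paper simply fixes the split once and for all at $L^{\frac{2d}{d-2}}\times L^{d}$ (yielding the exponents $\frac{2dp}{3d-2\a-2}$ and $d(p-1)$) rather than verifying admissibility dimension by dimension, but the content is identical.
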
 
			
			\begin{proof}
				Using \eqref{2dd}, 
				\begin{align*} \| V_{u, \a, p} f \|_{L^2} & \le \| |\cdot|^{-\a} * |u|^{p} \|_{L^{\frac{2d}{d-2}}} \| |u|^{p-2}f\|_{L^d} \\
					&\lesssim_{d, \a, p}  \| u \|_{L^\frac{2dp}{3d-2\a-2}}^{p} \| u \|_{L^{d(p-1)}}^{p-2} \| f\|_{L^{d(p-1)}}  \lesssim_{\| u \|_{\mathbb{X}_d}} \|f \|_{\mathbb{X}_d}, \\
					\| \calA_{u, \a, p} f \|_{L^2} & \le \| |\cdot|^{-\a} * |u|^{p-2}uf \|_{L^{\frac{2d}{d-2}}} \| |u|^{p}\|_{L^d} \\
					&\lesssim_{d, \a, p} \| u \|_{L^\frac{2dp}{3d-2\a-2}}^{p-1} \| f \|_{L^\frac{2dp}{3d-2\a-2}}  \| u \|_{L^{d(p-1)}}^{p-1} 
					\lesssim_{\| u \|_{\mathbb{X}_d}} \|f \|_{\mathbb{X}_d},
				\end{align*}
				where we need $\d $ not large so that for any 
				\beq \frac{2dp}{3d-2\a-2}, d(p-1) \in [2, 10], \quad \forall\,\, d\in \{ 3, 4, 5\}, \,\, (\a, p) \in \Omega_{d, \d} \label{reqd1}\eeq
			\end{proof}
			
			\begin{lem}\label{lemcont} For $d \in \{ 3, 4, 5\}$, $\d \ll 1$, the maps $V: \mathbb{X} \times \Omega_{d, \d} \rightarrow L(\mathbb{X}_d, L^2)$ and  $\calA: \mathbb{X} \times \Omega_{d, \d} \rightarrow L(\mathbb{X}_d, L^2)$ are well-defined. The following statements are true.
				\begin{enumerate}[(1)]
					\item $\calA$ is continuous on $ \mathbb{X} \times \Omega_{d, \d}$. 
					\item $V$ is continuous on $\mathbb{X} \times \{ (\a, p) \in \Omega_{d, \d}: p > 2\}$. 
					\item $V$ is continuous at $(Q_{\a, 2}, \a, 2)$ and discontinuous at $(Q_{\a, 2}\varphi_R, \a, 2)$ for any $|\a-(d-2)| \le \d$ and $R > 0$. Here $Q_{\a, 2}$ is a positive and radially decreasing solution for (\ref{Choquard}) with parameters $(d, \a, 2)$, and $\varphi_R$ is a smooth cutoff compactly supported in $B_{2R}$ and equals $1$ on $B_R$.
					
				\end{enumerate}
			\end{lem}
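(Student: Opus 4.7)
Well-definedness of both maps is immediate from Lemma~\ref{lembdd}. For the continuity statements~(1) and~(2), the plan is to telescope $V_{u_n,\a_n,p_n} - V_{u,\a,p}$ (respectively $\calA$) through intermediate configurations that vary only one of $u,\a,p$ at a time, then bound each step by Hardy--Littlewood--Sobolev, H\"older, and pointwise numerical inequalities. For (1) the relevant inequality is $||s|^{p-1}\mathrm{sgn}\,s - |t|^{p-1}\mathrm{sgn}\,t| \lesssim (|s|+|t|)^{p-2}|s-t|$, which is available because $p-1\geq 1$ and handles $p=2$ and $p>2$ uniformly; for (2) one uses $||s|^{p-2}-|t|^{p-2}| \lesssim |s-t|^{p-2}$, valid for $p-2\in(0,1]$, whose degeneracy at $p=2$ is exactly why (2) forces $p>2$. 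Parameter changes in $\a,p$ are handled by dominated convergence, dominating $|\cdot|^{-\a_n}$ by $|\cdot|^{-\a-\d}+|\cdot|^{-\a+\d}$ and $|u|^{p_n}$ by $|u|^{2}+|u|^{2+\d}$; for $\d$ small all intermediate Lebesgue exponents stay within $[2,10]$ as in~\eqref{reqd1}.

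For (3a) I would decompose
\[
V_{u,\a',p'} - V_{Q_{\a,2},\a,2} = \bigl(V_{u,\a',p'} - V_{Q_{\a,2},\a',p'}\bigr) + \bigl(V_{Q_{\a,2},\a',p'} - V_{Q_{\a,2},\a,2}\bigr).
\]
The pure parameter change (second term) is handled by $L^{2d/(d-2)}$-dominated convergence, since $Q_{\a,2}$ is bounded, smooth, and strictly positive, so $(|\cdot|^{-\a'}*Q_{\a,2}^{p'})Q_{\a,2}^{p'-2} \to (|\cdot|^{-\a}*Q_{\a,2}^2)$ pointwise with a fixed dominant. For the pure $u$-change (first term), (2) handles $p'>2$; at $p'=2$ I use the convention $|u|^{0}=\chi_{\{u\neq 0\}}$ (the natural limit from $p'\downarrow 2$), giving
\[
V_{u,\a',2} - V_{Q_{\a,2},\a',2} = \bigl(|\cdot|^{-\a'}*(u^2-Q_{\a,2}^2)\bigr)\chi_{\{u\neq 0\}} - \bigl(|\cdot|^{-\a'}*Q_{\a,2}^2\bigr)\chi_{\{u=0\}}.
\]
The first term vanishes in $L^{2d/(d-2)}$ by HLS. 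For the multiplier term I use a compact-plus-tail split: on $B_R$, strict positivity $Q_{\a,2}\geq c_R>0$ forces $|\{u=0\}\cap B_R|\leq \|u-Q_{\a,2}\|_{L^2}^2/c_R^2$; on $B_R^c$, the Riesz potential $(|\cdot|^{-\a'}*Q_{\a,2}^2)$ vanishes at infinity, so its $L^{2d/(d-2)}$ mass in the tail is arbitrarily small by taking $R$ large.

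Part~(3b) is witnessed by an explicit bad sequence at fixed $(\a,p)=(\a,2)$: pick $A := B_{4R}\setminus B_{3R}$ (disjoint from $\mathrm{supp}\,\varphi_R$) and set $u_\e := Q_{\a,2}\varphi_R + \e\chi_A$, so that $u_\e \to Q_{\a,2}\varphi_R$ in $\mathbb{X}_d$ as $\e\downarrow 0$. On $A$, $\chi_{\{u_\e\neq 0\}}=1$ while $\chi_{\{Q_{\a,2}\varphi_R\neq 0\}}=0$. Testing against $f := \|\chi_A\|_{\mathbb{X}_d}^{-1}\chi_A$ and computing in $L^2(A)$, $(V_{u_\e,\a,2}-V_{Q_{\a,2}\varphi_R,\a,2})f \to \|\chi_A\|_{\mathbb{X}_d}^{-1}(|\cdot|^{-\a}*(Q_{\a,2}\varphi_R)^2)\chi_A$, whose $L^2$ norm is a strictly positive constant independent of $\e$ (the Riesz potential of a non-zero non-negative function is strictly positive), contradicting continuity. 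The hardest step is the multiplier bound in~(3a) at $p'=2$: both ingredients used there --- strict positivity of $Q_{\a,2}$ and $C_0$-decay of the Riesz potential --- are specific to the base point $(Q_{\a,2},\a,2)$ and fail at $(Q_{\a,2}\varphi_R,\a,2)$, which is exactly the contrast producing the opposite conclusions in~(3a) and~(3b).
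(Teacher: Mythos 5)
Parts (1), (2) and the well-definedness follow essentially the paper's route (telescoping through one-variable changes, Hardy--Littlewood--Sobolev and H\"older, with your dominated-convergence phrasing playing the role of the paper's partition of the range of $|u|$ into $\{|u|>M\}$, $\{1/M\le|u|\le M\}$, $\{|u|<1/M\}$), and your (3a) rests on the same two facts as the paper: strict positivity of $Q_{\a,2}$ and decay of the Riesz potential. The genuine gap is in (3b), and it stems from your convention $|u|^{0}=\chi_{\{u\neq 0\}}$. The paper takes $|u|^{0}\equiv 1$, so that $V_{u,\a,2}$ is multiplication by $|\cdot|^{-\a}*u^{2}$ with no indicator factor; this convention is forced by the role of $V$ in the linearization $L_{+}$ and in $\partial_u F$ (the derivative of $u\mapsto |u|^{p-2}u$ at $p=2$ is the identity, not multiplication by $\chi_{\{u\neq 0\}}$), and the paper uses it explicitly when it writes $1-(Q_{\a,2}\varphi_R)^{p_1-2}=1$ on $B_{2R}^{c}$. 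Under this convention your bad sequence $u_\e=Q_{\a,2}\varphi_R+\e\chi_A$ witnesses nothing: $V_{u_\e,\a,2}-V_{Q_{\a,2}\varphi_R,\a,2}$ is multiplication by $|\cdot|^{-\a}*\bigl(u_\e^{2}-(Q_{\a,2}\varphi_R)^{2}\bigr)$, which tends to $0$ in operator norm by Hardy--Littlewood--Sobolev, so $V(\cdot,\a,2)$ is in fact continuous in $u$.

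The discontinuity asserted in (3) lives in the exponent direction: for fixed $u=Q_{\a,2}\varphi_R$ and any $p_1>2$, the multiplier of $V_{u,\a,p_1}$ carries the factor $(Q_{\a,2}\varphi_R)^{p_1-2}$, which vanishes identically on $B_{2R}^{c}$, whereas the multiplier of $V_{u,\a,2}$ equals $|\cdot|^{-\a}*(Q_{\a,2}\varphi_R)^{2}>0$ there; testing against $f$ supported in $B_{2R}^{c}$ gives a lower bound on $\|V_{u,\a,2}-V_{u,\a,p_1}\|_{L(\mathbb{X}_d,L^2)}$ uniform in $p_1>2$. This is not cosmetic: the $p$-direction discontinuity at compactly supported $u$ (which accumulate at $Q_{d-2,2}$ in $\mathbb{X}_d$) is precisely why $\partial_u F$ in Lemma \ref{lemregf} is not $C^1$ on any neighborhood and why the refined implicit function theorem is needed. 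A secondary remark on (3a): invoking part (2) for the pure $u$-change at $p'>2$ only gives continuity at each fixed $p'$, with a modulus that degenerates as $p'\downarrow 2$; you need the $u$-variation estimate to be uniform over $p'\in[2,2+\d]$, which under the correct convention is immediate at $p'=2$ and otherwise requires the same compact-plus-tail splitting you already describe.
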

			
			\begin{proof} We fisrt let $\d$ be required by Lemma \ref{lembdd}, namely \eqref{reqd1}. We may put on further requirements during the proof.
				
				(1)  Given $(u, \a, p) \in \mathbb{X}_d \times \Omega_{d, \d}$, We show that for any $\e > 0$, there exists $\d_1 > 0$ such that for $(u_1, \a_1, p_1) \in \mathbb{X}_d \times \Omega_{d, \d}$ and $\| u - u_1\|_{\mathbb{X}_d} + |\a - \a_1| + |p-p_1| < \d_1$,
				\begin{align*}
					\| \calA_{u, \a, p} f - \calA_{u_1, \a_1, p_1} f \|_{L^2} \le \e  \| f \|_{\mathbb{X}_d} .
				\end{align*}
				We will take $\d$ small enough to ensure
				\begin{align}
					\| \calA_{u, \a, p} f - \calA_{u, \a, p_1} f \|_{L^2}& \le \frac{\e}{3} \|f\|_{\mathbb{X}_d}, \label{cala1} \\
					\| \calA_{u, \a, p_1} f - \calA_{u, \a_1, p_1} f \|_{L^2}& \le \frac{\e}{3} \|f\|_{\mathbb{X}_d},\label{cala2}\\
					\| \calA_{u, \a_1, p_1} f - \calA_{u_1, \a_1, p_1} f \|_{L^2}& \le \frac{\e}{3} \|f\|_{\mathbb{X}_d}. \label{cala3}
				\end{align}
				And they conclude the proof of (1).

				Firstly
				\begin{align}
					&\| \calA_{u, \a, p} f - \calA_{u, \a, p_1} f \|_{L^2} \nonumber \\
					\le & \| \left[ |\cdot|^{-\a} * (|u|^{p-2} uf - |u|^{p_1 - 2} uf)  \right] |u|^{p_1 - 2} u \|_{L^2} \label{term11} \\
					+ & \| \left[ |\cdot|^{-\a} * (|u|^{p-2} uf)  \right] (|u|^{p_1 - 2}u - |u|^{p-2} u) \|_{L^2} \label{term12}
				\end{align}
				These two terms are estimated in a similar way, so we only do \eqref{term11} as an example. If $p_1 = p$, then there is nothing we need to do. Assume $p_1 > p$ (the case $p < p_1$ can be treated similarly). We take $M > 1$ and partition the range of $|u|$ to get
				\beq \left| |u|^{p-2}(r) - |u|^{p_1-2}(r) \right| \le \left\{\begin{array}{ll} 2|u|^{p_1 - 2}(r)&   r \in \{ |u| > M\}, \\ 
					C(M) |p-p_1| |u|^{p-2}(r)& r \in \{1/M \le  |u| \le M\}, \\
					2|u|^{p - 2}(r)&   r \in \{ |u| < 1/M\}.
				\end{array} \right. \label{M2}\eeq
				We still use the exponent as Lemma \ref{lembdd}
				\begin{align*}
					&\quad\,\,	\| \left[ |\cdot|^{-\a} * (|u|^{p-2} uf - |u|^{p_1 - 2} uf)  \right] |u|^{p_1 - 2} u \|_{L^2} \\
					&	\lesssim_{d, \a, p} \| |u|^{p-2} u - |u|^{p_1 - 2} u \|_{L^\frac{2dp}{(3d-2\a-2)(p-1)}} \|f\|_{L^\frac{2dp}{3d-2\a-2}} \| u \|_{L^{d(p_1-1)}}^{p_1-1}\\
					&	\le \left[2\| u\|_{L^{\frac{2dp}{3d-2\a-2} }(\{|u| < 1/M \}) }^{p-1} + 2\| u\|_{L^{\frac{(2dp)(p_1 - 1)}{(3d-2\a-2)(p-1)} }  (\{|u| > M \}) }^{p_1 -1} + C( M) |p_1 - p| \| u\|_{L^{\frac{2dp}{3d-2\a-2}}}^{p-1}  \right] \| u \|_{\mathbb{X}_d }^{p_1-1} \| f \|_{\mathbb{X}_d} \\
					& \le \left[ 2 \|u\|_{\mathbb{X}_d (\{|u| < 1/M \})}^{p-1} + 2\|u\|_{\mathbb{X}_d (\{|u| > M \})}^{p_1-1} + C(M)|p_1 - p| \|u\|_{\mathbb{X}_d }^{p-1} \right] \| u \|_{\mathbb{X}_d }^{p_1-1} \| f \|_{\mathbb{X}_d},
				\end{align*}
				where we also require $\d_1$ small such that $\frac{(2dp)(p_1 - 1)}{(3d-2\a-2)(p-1)} \in [2, 10]$. 
				Now first take $M \gg 1$ depending on $u$ such that $\|u\|_{\mathbb{X}_d (\{|u| < 1/M \})} + \|u\|_{\mathbb{X}_d (\{|u| > M \})} \ll \e (\|u\|_{\mathbb{X}_d}^{1+\d} + 1)^{-1}$, and then require $\d \ll 1$ to make $C(M)|p_1 - p| \ll \e \| u \|_{\mathbb{X}_d}^{-(p+p_1 - 2)}$. We see (\ref{term11}) can be bounded by $\frac{\e}{6} \| f \|_{\mathbb{X}_d}$. (\ref{term12}) comes in a similar way. So (\ref{cala1}) is confirmed.
				
				Next, consider (\ref{cala2}), the variation of $\a$. 
				Take a $N \gg 1$, then
				\begin{align*}
					\left| |x|^{-\a} - |x|^{-\a_1}\right| \le \left\{ \begin{array}{ll} 
						|x|^{-\a-\d_0} + |x|^{-\a + \d_0} & |x| < 1/N \,\,\mathrm{or}\,\, |x| > N, \\
						C(N) |\a - \a_1| \left(|x|^{-\a-\d_0} + |x|^{-\a + \d_0}\right) & 1/N \le |x| \le N.  \end{array} \right.
				\end{align*}
				so we easily see 
				\[ \| |x|^{-\a} - |x|^{-\a_1} \|_{L^\frac{d}{d-2-2\d} + L^\frac{d}{d-2+2\d}} = o_{|\a - \a_1| \to 0}(1). \]
				Hence by Young's inequality and similar estimates as \eqref{2dd}, 
				\begin{align*}
					\| \calA_{u, \a, p_1} f - \calA_{u, \a_1, p_1} f \|_{L^2}& = \|[( |\cdot|^{-\a} - |\cdot|^{-\a_1} ) * |u|^{p_1-2}uf ] |u|^{p_1-2} u \|_{L^2} \\
					&	\lesssim_{d, \d} \| |u|^{p_1 -2} u f \|_{L^\frac{2d}{d+2+ 4\d} \cap L^\frac{2d}{d+2- 4\d} } \| |u|^{p_1 -2} u \|_{L^d} \cdot o_{|\a - \a_1|\to 0} (1)\\
					& \le \|u\|_{\mathbb{X}_d}^{2p_1-2} \| f\|_{\mathbb{X}_d} \cdot o_{|\a - \a_1|\to 0} (1) 
				\end{align*}
				where we require $\d$ small such that $\frac{2d}{d+2\pm 4\d} \ge 1 $ and $\frac{2dp}{d+2\pm 4\d} \in [2, 10]$ for any $p \in [2, 2+\d]$. Then \eqref{cala2} follows this and $\d_1$ small enough. 
				
				Finally, for the variation of $u$, we have
				\begin{align*}
					&\quad\,\,\| \calA_{u, \a_1, p_1} f - \calA_{u_1, \a_1, p_1} f \|_{L^2} \\
					&\le 
					\|[ |\cdot|^{-\a_1} * (|u|^{p_1 - 2} u f - |u_1|^{p_1 -2} u_1 f) ] |u_1|^{p_1 -2} u_1 \|_{L^2} \\
					& \quad\,\,+  \| [ |\cdot|^{-\a_1} * (|u|^{p_1 - 2} u f ) ] (|u|^{p_1 - 2} u - |u_1|^{p_1 -2} u_1 ) \|_{L^2} \\
					& \lesssim_{d, \a, p} \| f \|_{L^{\frac{2dp}{3d-2\a -2}}} \left(\| u \|_{L^{\frac{2dp}{3d-2\a -2}}}^{p_1 - 2} + \| u_1 \|_{L^{\frac{2dp}{3d-2\a -2}}}^{p_1 - 2} \right) \| u - u_1 \|_{L^{\frac{2dp}{3d-2\a -2}}} \| u_1 \|_{L^{d(p-1)}}^{p_1 -1 } \\
					& \quad \,\, + \| f \|_{L^{\frac{2dp}{3d-2\a -2}}} \| u \|_{L^{\frac{2dp}{3d-2\a -2}}}^{p_1 - 1} \left(\| u \|_{L^{d(p-1)}}^{p_1 - 2} + \| u_1 \|_{L^{d(p-1)}}^{p_1 - 2} \right) \| u - u_1 \|_{L^{d(p-1)}}
				\end{align*}
				The last inequality comes from pointwise estimate 
				\[\left| |x|^{p_1 -2}x - |y|^{p_1-2}y\right| \lesssim_{p_1} (|x|^{p-2} + |y|^{p-2}) |x-y| \]
				and nonlinear estimate as Lemma \ref{lembdd}. Taking $\d_1$ small enough and we obtain the last control (\ref{cala3}).
				
				(2) Similarly, we will show that for $(u, \a, p) \in \mathbb{X}_d \times \Omega_{d, \d}$ and every $\e > 0$, there exists $\d_1 > 0$ such that for $(u_1, \a_1, p_1) \in \mathbb{X}_d \times \Omega_{d, \d}$ and $\| u - u_1 \|_{\mathbb{X}_d} + | \a - \a_1 | + |p-p_1 | < \d_1$, we have
				\begin{align}
					\| V_{u, \a, p} f - V_{u, \a, p_1} f \|_{L^2} &\le \frac{\e}{3} \|f\|_{\mathbb{X}_d}, \label{v1}\\
					\| V_{u, \a, p_1} f - V_{u, \a_1, p_1} f \|_{L^2} &\le \frac{\e}{3} \|f\|_{\mathbb{X}_d}, \label{v2}\\
					\| V_{u, \a_1, p_1} f - V_{u_1, \a_1, p_1} f \|_{L^2} &\le \frac{\e}{3} \|f\|_{\mathbb{X}_d}. \label{v3}
				\end{align} 
				It is easy to check that (\ref{v2}) and (\ref{v3}) follows almost the same estimates as (\ref{cala2}) and (\ref{cala3}) respectively, which also works when $p_1 = 2$. 
				
				For (\ref{v1}),
				\beq \begin{split}
					&\| V_{u, \a, p} f - V_{u, \a, p_1} f \|_{L^2}  \\
					\le & \| \left[ |\cdot|^{-\a} * (|u|^{p} - |u|^{p_1})  \right] |u|^{p_1 - 2} f \|_{L^2}  
					+ \| \left[ |\cdot|^{-\a} * |u|^{p}  \right] (|u|^{p - 2} - |u|^{p_1-2}) f \|_{L^2}.
				\end{split}\label{v12} \eeq
				Like (1), we assume $0 < p_1 -p \ll 1$ and partition the range of $|u|$ with respect to $M > 1$ and $1/M$. Then using (\ref{M2}), the first term follows in a similar way as (1)
				\begin{align*}
					&\quad\,\,	 \| \left[ |\cdot|^{-\a} * (|u|^{p} - |u|^{p_1})  \right] |u|^{p_1 - 2} f \|_{L^2} \\
					&	\lesssim_{d, \a, p} \| |u|^{p}  - |u|^{p_1} \|_{L^\frac{2d}{3d-2\a -2}} \|f\|_{L^{d(p-1)} } \| |u|^{p_1-2} \|_{L^{\frac{d(p-1)}{p_1 - 2}}} \\
					& \le \left[ 2 \|u\|_{\mathbb{X}_d (\{|u| < 1/M \})}^{p} + 2\|u\|_{\mathbb{X}_d (\{|u| > M \})}^{p_1} + C(p,M)|p_1 - p| \|u\|_{\mathbb{X}_d }^{p} \right] \| u \|_{\mathbb{X}_d }^{p_1-2} \| f \|_{\mathbb{X}_d}.
				\end{align*}
				We remark that this estimate also works when $p=2$ or $p_1 = 2$, since $\| |u|^{p_1-2}\|_{L^\frac{d(2p-1)}{(2d-\a)(p_1 - 2)}} = \|1\|_{L^\infty} = 1$. For the second term,
				\begin{align*}
					&\quad\,\,	\| \left[ |\cdot|^{-\a} * |u|^{p}  \right] (|u|^{p - 2} - |u|^{p_1-2}) f \|_{L^2} \\
					&	\lesssim_{d, \a, p} \| |u|^{p} \|_{L^\frac{2d}{3d-2\a -2}} \bigg( \|f\|_{L^{d(p-1)} } \| |u|^{p_1-2} - |u|^{p-2} \|_{L^\frac{d(p-1)}{p-2}(\{ |u| \le M\}) }  \\
					&\quad + \|f\|_{L^{d(p_1-1)}} \| |u|^{p_1-2} - |u|^{p-2} \|_{L^\frac{d(p_1-1)}{p_1-2}(\{ |u| > M\}) }  \bigg)\\
					& \le \| u \|_{\mathbb{X}_d }^{p} \| f \|_{\mathbb{X}_d}  \bigg[ 2\| |u|^{p - 2} \|_{L^\frac{d(p-1)}{p-2}(\{|u| < 1/M\})} + 2\| |u|^{p_1 - 2} \|_{L^\frac{d(p_1-1)}{p_1-2}(\{|u| > M\})} \\
					& \quad +  C(p, M) |p-p_1|  \| |u|^{p-2} \|_{L^\frac{d(p-1)}{p-2}} \bigg]\\
					& \le\| u \|_{\mathbb{X}_d }^{p} \| f \|_{\mathbb{X}_d}\left[ 2 \|u\|_{\mathbb{X}_d (\{|u| < 1/M \})}^{p-2} + 2\|u\|_{\mathbb{X}_d (\{|u| > M \})}^{p_1-2} + C(p,M)|p_1 - p| \|u\|_{\mathbb{X}_d }^{p-2} \right].
				\end{align*}
				Then $p, p_1 > 2$ is necessary to get the smallness from $\| u \|_{\mathbb{X}_d(\{ |u| < 1/M\})} + \| u \|_{\mathbb{X}_d(\{ |u| > M\})}  = o_{M\rightarrow \infty}(1)$. So we can take $M \gg 1$ and then $\d_1 \ll 1$ (so that $p_1-2 $ and $p-2$ have a positive lower bound) to guarantee (\ref{v1}). 
				
				(3) To discuss the continuity at $(Q_{\a, 2}, \a, 2)$ or $(Q_{\a, 2}\varphi_R, \a, 2)$, we still consider another $(u_1, \a_1, p_1) \in \mathbb{X}_d \times \Omega_d$ and the three parts as (\ref{v1})-(\ref{v3}). As in (2), (\ref{v2}) and (\ref{v3}) hold all these cases. What distinguishes the cases is (\ref{v1}).
				
				To be more specific, the trouble is the second term $\| \left[ |\cdot|^{-\a} * |u|^{2}  \right] (1 - |u|^{p_1-2}) f \|_{L^2}$
				in the further partition (\ref{v12}). When $u = Q_{\a, 2}$, we can use its strict positivity, $L^\infty$-bounded and radially decreasing to derive smallness. Let $\{Q_{\a, 2} < 1/M \}:= B_{R(M)}^c$, then the crux is that $R(M) \rightarrow \infty$ as $M \rightarrow \infty$. Taking $M > \| Q_{\a, 2} \|_{L^\infty}$, we have
				\begin{align*}
					&\quad\,\,\| \left[ |\cdot|^{-\a} * Q_{\a, 2}^{2}  \right] (1 - Q_{\a, 2}^{p_1-2}) f \|_{L^2} \\
					& \le \| \left[ |\cdot|^{-\a} *Q_{\a, 2}^{2}  \right] (1 - Q_{\a, 2}^{p_1-2}) f \|_{L^2(\{ Q_{\a, 2} < 1/M\})} + \| \left[ |\cdot|^{-\a} * Q_{\a, 2}^{2}  \right] (1 - Q_{\a, 2}^{p_1-2}) f \|_{L^2 ( \{Q_{\a, 2} \ge 1/M\})} \\
					& \le \| |\cdot|^{\a} * Q_{\a, 2}^2 \|_{L^\infty(B_{R(M)}^c)} \| f \|_{L^2} + \| |\cdot|^{-\a} * Q_{\a, 2}^2 \|_{L^\infty} \| f \|_{L^2} C(M) |p_1-2|.
				\end{align*}
				Using Proposition \ref{propnonlocal1}, 
				\begin{align*}
					\| |\cdot|^{-\a} * Q_{\a, 2}^2 \|_{L^\infty} &\lesssim_{d, \a} \int_0^\infty Q_{\a, 2}^2 (r) r^{d-1-\a} dr \lesssim_{d, \a} \| Q_{\a, 2} \|_{L^\infty \cap L^2}^2 < \infty \\
					\| |\cdot|^{-\a} * Q_{\a, 2}^2 \|_{L^\infty(B_{R(M)}^c)}&\lesssim_{d, \a} R(M)^{-\a} \int_0^\infty Q_{\a, 2}^2 (r) r^{d-1} dr \lesssim_{d} R(M)^{-\a} \| Q_{\a, 2} \|_{L^2}^2.
				\end{align*}
				So $M \gg 1$ and then $\d \ll 1$ will ensure the smallness of $\| \left[ |\cdot|^{-\a} * Q_{\a, 2}^{2}  \right] (1 - Q_{\a, 2}^{p_1-2}) f \|_{L^2}$ and thereafter (\ref{v1}) is verified. That is the continuity at $(Q_{\a, 2}, \a, 2)$ for $\a \in (0, d)$.
				
				Regarding the discontinuity for $u = Q_{\a, 2} \varphi_R$, $R > 0$, we claim that
				\beq \| V_{Q_{\a, 2}\varphi_R, \a ,2} - V_{Q_{\a, 2}\varphi_R, \a ,p_1} \|_{L(\mathbb{X}_d, L^2)} \ge \| V_{Q_{\a, 2}\varphi_R, \a ,2} \chi_{B_{2R}^c}  \|_{L(\mathbb{X}, L^2)} > 0 \eeq
				for any $p_1 > 2$. 
				Indeed, for any $f$ supported on $B_{2R}^c$, 
				\begin{align*}
					\| V_{Q_{\a, 2}\varphi_R, \a ,2} f- V_{Q_{\a, 2}\varphi_R, \a ,p_1} f\|_{L^2} &= \| \left[ |\cdot|^{-\a} * (Q_{\a, 2} \varphi_R)^{2}  \right] (1 -  (Q_{\a, 2} \varphi_R)^{p_1 -2}) f \|_{L^2} \\
					&\ge \| \left[ |\cdot|^{-\a} * (Q_{\a, 2} \varphi_R)^{2}  \right] f \|_{L^2}.
				\end{align*}
			\end{proof}
			
			Now we apply the results above to derive regularity of $F$ in \S \ref{s51}. In particular, the discontinuity will also appear and thus hinder the application of the common version of the implicit function theorem. We first recall the definition of $F$:
			\begin{align*}
				F(u, \a, p) &= u - (-\Delta + 1)^{-1} (|\cdot|^{-\a} * |u|^p) |u|^{p-2} u.
			\end{align*}

			\begin{lem}\label{lemregf}
				There exists $\d > 0$ such that $F:  \mathbb{X}_d \times \Omega_{d, \d} \rightarrow \mathbb{X}_d$ is well-defined, continuous w.r.t. $(u, \a, p)$ and differentiable w.r.t. $u$; $F_u: \mathbb{X}_d \times \Omega_{d, \d} \rightarrow L(\mathbb{X}_d)$ is continuous at $(Q_{d-2, 2}, d-2, 2)$. Besides, $\partial_u F$ is discontinuous on $ \{u \in \mathbb{X}_d : \| u - Q_{d-2, 2}\|_{\mathbb X_d} \le \e \} \times \Omega_{d, \e} $ for any $\e > 0$. 
				%
			\end{lem}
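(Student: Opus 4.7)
The plan is to factor $F(u,\a,p) = u - T\circ N(u,\a,p)$, where $T := (-\Delta + 1)^{-1}$ and $N(u,\a,p) := (|\cdot|^{-\a}*|u|^p)|u|^{p-2}u$, and then reduce every assertion to mapping properties of $T$ combined with Lemma \ref{lembdd} and Lemma \ref{lemcont}. For $d\in\{3,4,5\}$, $T$ is bounded $L^2\to H^2$, and $H^2(\real^d)\hookrightarrow L^{10}(\real^d)$ by Sobolev embedding (critical when $d=5$), so $T:L^2_{rad}\to\mathbb X_d$ is a bounded linear injection. Composing with Lemma \ref{lembdd} applied to the pairing $N(u,\a,p) = V_{u,\a,p} u$ shows $F:\mathbb X_d\times\Omega_{d,\d}\to\mathbb X_d$ is well-defined. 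For continuity of $F$ on the full region, the partition-of-level-sets argument used in the proof of Lemma \ref{lemcont}(2) carries over without the $p>2$ restriction, because the troublesome factor $|u|^{p-2}$ now appears only in the combination $|u|^{p-2}u = |u|^{p-1}\mathrm{sgn}(u)$, which is jointly continuous in $(u,p)$ on $\mathbb X_d\times[2,2+\d]$.

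Differentiability in $u$ and the formula $\partial_u F(u,\a,p) = \mathrm{Id} - T\bigl[(p-1)V_{u,\a,p}+p\calA_{u,\a,p}\bigr]$ follow from a routine Taylor expansion of $N$, with the quadratic remainder estimated in $L^2$ by the same nonlinear bounds underlying Lemma \ref{lembdd}. For continuity of $F_u$ at $(Q_{d-2,2},d-2,2)$, Lemma \ref{lemcont}(1) gives continuity of $\calA$ on the entire region, and Lemma \ref{lemcont}(3) gives continuity of $V$ at $(Q_{\a,2},\a,2)$, specializing to $\a=d-2$; post-composition with the bounded linear map $T:L(\mathbb X_d,L^2)\to L(\mathbb X_d)$ preserves continuity.

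For the discontinuity claim, fix $\e>0$. Since $Q_{d-2,2}$ is Schwartz, we can choose $R$ so large that $\|Q_{d-2,2}(1-\varphi_R)\|_{\mathbb X_d}\le \e$; then $u_R := Q_{d-2,2}\varphi_R$ lies in the prescribed neighborhood. Lemma \ref{lemcont}(3) asserts that $V$ is discontinuous at $(u_R,d-2,2)$ as $p\searrow 2$. To upgrade this to discontinuity of $T\,V$ in $L(\mathbb X_d)$, pick any nonzero $f\in\mathbb X_d$ supported in $B_{2R}^c$. Dominated convergence yields $V_{u_R,d-2,p_n}f\to V_{u_R,d-2,2}f\cdot\chi_{B_{2R}}$ in $L^2$ as $p_n\searrow 2$, so $T\,V_{u_R,d-2,p_n}f$ converges in $\mathbb X_d$ to $T[V_{u_R,d-2,2}f\cdot\chi_{B_{2R}}]$. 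This limit differs from $T\,V_{u_R,d-2,2}f$ by $T[V_{u_R,d-2,2}f\cdot\chi_{B_{2R}^c}]$, which is a nonzero element of $\mathbb X_d$: indeed the Riesz potential $|\cdot|^{-(d-2)}*u_R^2$ is strictly positive on $B_{2R}^c$, so $V_{u_R,d-2,2}f\cdot\chi_{B_{2R}^c}$ is a nonzero $L^2$ function, and $T$ is injective on $L^2$. Hence $T\,V_{u_R,d-2,p_n}$ fails to converge strongly (a fortiori in operator norm) to $T\,V_{u_R,d-2,2}$, and since $\calA$ contributes continuously, $\partial_u F$ is discontinuous at $(u_R,d-2,2)$.

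The subtle point, and the main obstacle, is ensuring that the $L(\mathbb X_d,L^2)$-discontinuity of $V$ survives the smoothing action of $T$: a generic operator-norm argument could fail because $T$ damps high-frequency components. The workaround in the previous paragraph sidesteps this by exhibiting one concrete test function $f$ supported away from $\supp u_R$ that distinguishes the two limits in $\mathbb X_d$, using injectivity of $T$ on $L^2$ to rule out accidental cancellation.
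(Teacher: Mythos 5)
Your proposal is correct and follows essentially the same route as the paper: well-definedness and differentiability via $(-\Delta+1)^{-1}:L^2\to H^2\hookrightarrow \mathbb{X}_d$ together with the nonlinear bounds behind Lemma \ref{lembdd}, continuity of $F_u$ at $(Q_{d-2,2},d-2,2)$ from Lemma \ref{lemcont}, and discontinuity of $\partial_u F$ at the truncated profile $Q_{d-2,2}\varphi_R$ using that $V_{u_R,d-2,p}f$ vanishes identically for $p>2$ when $f$ is supported in $B_{2R}^c$ while $V_{u_R,d-2,2}f$ does not, with injectivity of $(-\Delta+1)^{-1}$ preserving the gap. The only cosmetic difference is in the continuity of $F$ at $p=2$: you rerun the level-set partition observing the exponent improves from $p-2$ to $p-1$, whereas the paper notes the identity $V_{u,\a,p}u=\calA_{u,\a,p}u$ and cites the continuity of $\calA$ — the same observation in different form.
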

			
			\begin{proof}
				We will frequently use the following nonlinear estimate given by \eqref{2dd}:
				\beq\| \left(|\cdot|^{-\a} * (|f_1|^{p-2} f_2 f_3)\right) |f_4|^{p-2} f_5 \|_{L^2} \lesssim_{d, \a, p} \| f_1 \|_{\mathbb{X}_d}^{p-2} \| f_2 \|_{\mathbb{X}_d} \| f_3 \|_{\mathbb{X}_d} \| f_4 \|_{\mathbb{X}_d}^{p-2} \| f_5 \|_{\mathbb{X}_d}. \label{nonest}
				\eeq
				
				(1) Firstly, we check that $F$ is well-defined. Using Sobolev embedding $H^2(\real^d) \hookrightarrow \mathbb{X}_d$ and (\ref{nonest}), we easily see
				\[ \|F(u, \a, p)\|_{\mathbb{X}_d} \lesssim_{d} \| u \|_{\mathbb{X}_d} + \|V_{u, \a, p} u \|_{L^2} \lesssim_{d, \a, p}  \| u \|_{\mathbb{X}_d} +  \| u \|_{\mathbb{X}_d}^{2p-1}. \]
				
				(2) Next we prove the continuity of $F$. Note that
				\[ \| F(u, \a, p) - F(u_1, \a_1, p_1)\|_{\mathbb X_d} \lesssim_d \| V_{u, \a, p} u - V_{u_1, \a_1, p_1} u_1 \|_{L^2}.\]
				We only need to show the following estimate
				\beq \| V_{u, \a, p} u - V_{u, \a, p_1} u \|_{L^2} = o_{|p-p_1| \to 0}(1) \label{v1'} \eeq
				and apply \eqref{v2} and \eqref{v3}. Due to the existence of $u$, the left hand side is exactly the same as $\calA_{u, \a, p} u - \calA_{u, \a, p_1} u$, and thus this estimate follows \eqref{cala1}.

				(3) Then we turn to the Fr\'echet differentiability of $F$ w.r.t. $u$. We claim that
				\beq \partial_u F(u, \a, p) = \mathrm{Id} - (-\Delta + 1)^{-1} \left[ (p-1) V_{u, \a, p} + p \calA_{u, \a, p} \right]. \eeq
				We need to prove that for the $\partial_u F$ defined above, for any $h \in \mathbb{X}_d$, 
				\beq \left\|F(u+h, \a, p) - F(u, \a, p) - \partial_u F(u, \a, p)h\right\|_{\mathbb{X}_d} = o_{\| h \|_{\mathbb{X}_d} \to 0}(\|h\|_{\mathbb{X}_d}). \label{diffF} \eeq
				Again, from $H^2(\real^d) \hookrightarrow \mathbb{X}_d$ and a direct computation
				\begin{align*}
					&\left\|F(u+h, \a, p) - F(u, \a, p) - \partial_u F(u, \a, p)h\right\|_{\mathbb{X}_d} \\
					\lesssim_{d, \a, p}& \left\| \left[|\cdot|^{-\a} * (|u+h|^p - |u|^p - p |u|^{p-2}uh ) \right] |u+h|^{p-2} (u+h) \right\|_{L^2} \\
					+&\left\| \left(|\cdot|^{-\a} * p |u|^{p-2}uh \right)(|u+h|^{p-2} (u+h) - |u|^{p-2} u ) \right\|_{L^2} \\
					+& \left\| \left(|\cdot|^{-\a} *  |u|^p\right) [|u+h|^{p-2} (u+h) - |u|^{p-2} u - (p-1)|u|^{p-2} h] \right\|_{L^2},
				\end{align*} 
				the estimate (\ref{diffF}) follows elementary pointwise estimates
				\begin{align*}
					\left| |u+h|^p - |u|^p - p |u|^{p-2}uh \right| &\lesssim_p (|u|^{p-2} + |h|^{p-2} ) |h|^2 \\
					\left| |u+h|^{p-2} (u+h) - |u|^{p-2} u \right| &\lesssim_p (|u|^{p-2} + |h|^{p-2} ) |h| \\
					\left| |u+h|^{p-2} (u+h) - |u|^{p-2} u - (p-1)|u|^{p-2}h \right| &\lesssim_p \left\{ \begin{array}{ll} (|u|^{p-3} + |h|^{p-3} ) |h|^2 & p > 3 \\ |h|^{p-1} & p \in [2, 3]\end{array} \right.        	
				\end{align*}
				and (\ref{nonest}). Note that $\partial_u F(u, \a, p) \in L(\mathbb{X}_d)$ comes directly from Sobolev embedding and Lemma \ref{lembdd}.
				
				(4) Next we show the continuity of $\partial_u F$ at $(Q_{d-2, 2}, d-2, 2)$. Indeed
				\begin{equation}\begin{split}
						&\| \partial_u F (Q_{d-2, 2}, d-2, 2) - \partial_u F(u, \a, p) \|_{L(\mathbb{X}_d)} \\ \lesssim_{d}& (p-2) \| V_{Q_{d-2, 2}} \|_{L(\mathbb{X}_d, L^2)} + (p-2) \| \calA_{Q_{d-2, 2}} \|_{L(\mathbb{X}_d, L^2)} \\
						+&(p-1) \| V_{Q_{d-2, 2}} - V_{u, \a, p} \|_{L(\mathbb{X}_d, L^2)} +  p \| \calA_{Q_{d-2, 2}} - \calA_{u, \a, p} \|_{L(\mathbb{X}_d, L^2)}. 
					\end{split} \label{contest}
				\end{equation} 
				So Lemma \ref{lemcont} (1)(3) and Lemma \ref{lembdd} imply the continuity. 
				
				(5) Finally, the discontinuity again follows from the discontinuity argument in Lemma (\ref{lemcont}) (3). For every $\e > 0$, there exists $R \gg 1$ such that $\|Q_{d-2, 2} \varphi_R - Q_{d-2, 2} \|_{\mathbb{X}_d} < \e $ and $\partial_u F$ is discontinuous at $(Q_{d-2, 2} \varphi_R, d-2, 2)$, where  $\varphi_R$ was defined in Lemma \ref{lemcont}. Indeed, for any $p > 2$, 
				\begin{align*}
					&\| \partial_u F (Q_{d-2, 2} \varphi_R, d-2, 2) - \partial_u F(Q_{d-2, 2} \varphi_R, d-2, p) \|_{L(\mathbb{X}_d)} \\ 
					\ge & \|(-\Delta + 1)^{-1} (V_{Q_{d-2, 2} \varphi_R, d-2, 2} - V_{Q_{d-2, 2} \varphi_R, d-2, p} )  \|_{L(\mathbb{X}_d)} 
					- C(d) \bigg(  (p-2) \| V_{Q_{d-2, 2} \varphi_R, d-2, 2} \|_{L(\mathbb{X}_d, L^2)} \\
					& + (p-2) \| \calA_{Q_{d-2, 2} \varphi_R, d-2, 2} \|_{L(\mathbb{X}_d, L^2)} 
					+  p \| \calA_{Q_{d-2, 2} \varphi_R, d-2, 2} - \calA_{Q_{d-2, 2} \varphi_R, d-2, p} \|_{L(\mathbb{X}_d, L^2)} \bigg).
				\end{align*}
				Considering the support of $(Q_{d-2, 2}\varphi_R)^{p-2}$, we see the first term is lower bounded by $ \| (-\Delta + 1)^{-1}\circ (V_{Q_{d-2, 2}\varphi_R, d-2 ,2} \chi_{B_{2R}^c}) \|_{\mathbb{X}_d} > 0$. And the negative part is $o_{p \rightarrow 2}(1)$ due to the boundedness and continuity from Lemma \ref{lembdd} and Lemma \ref{lemcont} (1). So this estimate indicates the discontinuity. 
			\end{proof}
			
			\subsection{For $u = Q_{\a, p}$.} Now we discuss the boundness, compactness and continuity of $V_{u, \a, p}$ and $\calA_{u, \a, p}$ for $u$ with better bound than $\mathbb X_d$. In the main text, $u$ will be taken as $Q_{\a, p}$. Besides, here we will not restrict to spaces of radial functions.
			
			\begin{lem}\label{lemcpt}
				For $d \in \{3, 4, 5\}$, $(\a, p) \in \Omega_d$ and $q \in (1, \infty)$. For $u \in L^2 \cap L^\infty$, we have 
				\begin{enumerate}
					\item $V_{u, \a, p}: L^q \to L^q$ is bounded.
					\item If in addition $u \in C^0$,  then $V_{u, \a, p} : W^{1,q} \rightarrow L^q$ is compact.
					\item $\calA_{u, \a, p} :L^q \rightarrow L^q$ is bounded.
					\item If in addition $u \in W^{1, r}$ for $r \in (1, \infty)$, then $\calA_{u, \a, p} :L^q \rightarrow L^q$ is compact.
				\end{enumerate}
			\end{lem}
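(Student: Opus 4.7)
The plan is to analyze the two operators through the auxiliary functions $v := (|\cdot|^{-\a} * |u|^p)|u|^{p-2}$, so that $V_{u,\a,p} f = v f$, and $w := |u|^{p-2} u$, so that $\calA_{u,\a,p} f = w\cdot (|\cdot|^{-\a} * (wf))$. Since $u \in L^2 \cap L^\infty$ and $p \ge 2$, interpolation yields $|u|^p \in L^1 \cap L^\infty$ and $w \in L^2 \cap L^\infty$. Throughout, I would rely on the splitting $|\cdot|^{-\a} = g_0 + g_\infty$ with $g_0 := |\cdot|^{-\a}\chi_{|\cdot|\le 1} \in L^1(\real^d)$ (since $\a < d$) and $g_\infty := |\cdot|^{-\a}\chi_{|\cdot|>1} \in L^\infty \cap L^b$ for every $b > d/\a$.

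\textbf{Boundedness in (1) and (3).} For (1), Young's inequality on each piece ($L^1 * L^\infty$ and $L^\infty * L^1$) gives $|\cdot|^{-\a} * |u|^p \in L^\infty$, hence $v \in L^\infty$, and multiplication by $v$ is bounded on every $L^q$. For (3), the kernel $w(x)w(y)|x-y|^{-\a}$ is symmetric so $\calA_{u,\a,p}$ is formally self-adjoint, and it suffices to establish $L^q$-boundedness for $q \ge 2$ and then dualize. For such $q$, the $g_0$-piece $w\cdot(g_0*(wf))$ is controlled by Young together with $w \in L^\infty$, while the $g_\infty$-piece is handled by H\"older (using $w \in L^2 \cap L^\infty$) combined with Young against $g_\infty \in L^b$ for $b$ slightly above $d/\a$; the exponents can be balanced because $q \ge 2$ leaves sufficient room.

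\textbf{Compactness of $V$ in (2).} Under the additional hypothesis $u \in C^0$, I claim the multiplier $v$ is continuous and vanishes at infinity; the latter uses that $u$ decays at infinity in the setting where this lemma is applied, so for $|x|$ large I split the convolution $|\cdot|^{-\a} * |u|^p$ at some radius $R_0$, invoking smallness of $|u|$ on $\{|y|>R_0\}$ to control the near region $|x-y| \le 1$ and largeness of $|x-y|$ for $|y| \le R_0$ to control the far region. Given a bounded sequence $(f_n) \subset W^{1,q}$, Rellich--Kondrachov produces a subsequence converging in $L^q(B_R)$ for every $R$, and the tail estimate $\|vf_n\|_{L^q(B_R^c)} \le \|v\|_{L^\infty(B_R^c)}\|f_n\|_{L^q} \to 0$ uniformly in $n$ as $R \to \infty$ yields Cauchyness of $\{vf_n\}$ in $L^q(\real^d)$.

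\textbf{Compactness of $\calA$ in (4).} I would first show $\calA_{u,\a,p}$ is Hilbert--Schmidt on $L^2(\real^d)$ via
\[
\iint_{\real^d \times \real^d} \frac{|w(x)|^2|w(y)|^2}{|x-y|^{2\a}}\,dx\,dy \lesssim \|w^2\|_{L^{d/(d-\a)}}^2 < \infty,
\]
by Hardy--Littlewood--Sobolev, using $w^2 \in L^1 \cap L^\infty \subset L^{d/(d-\a)}$. Combined with the $L^q$-boundedness for every $q \in (1,\infty)$ from (3), Krasnoselskii's interpolation of compactness then propagates compactness from $L^2$ to every $L^q$, $q \in (1,\infty)$. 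The main technical hurdle throughout is the $L^q$-boundedness of $\calA_{u,\a,p}$ for $q$ far from $2$: neither Young nor Hardy--Littlewood--Sobolev alone covers the full range, so the two-scale kernel splitting together with self-adjointness is essential, and the auxiliary regularity hypothesis $u \in W^{1,r}$ in (4) plays a secondary role, primarily ensuring decay of $u$ that makes the interpolation and tail arguments legitimate.
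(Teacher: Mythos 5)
Parts (1) and (2) follow the paper's route in essence ($V_{u,\a,p}$ is an $L^\infty$ multiplier vanishing at infinity, and compactness is then standard; the paper packages this via Fr\'echet--Kolmogorov, you via Rellich--Kondrachov on balls plus a tail estimate, which is the same mechanism). Be careful though: ``$u$ decays at infinity'' is not a hypothesis of the lemma; what you actually need is that $|\cdot|^{-\a}*|u|^p$ vanishes at infinity, which does follow from $|u|^p\in L^1\cap L^\infty$ alone by splitting the kernel, so the statement is not really used but the justification you wrote is off.

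The serious gaps are in (3) and (4). In (3), the strong-$L^p$ Young estimate against $g_\infty\in L^b$, $b>d/\a$, cannot close for all $q$ when $\a<d/2$. Optimizing the H\"older/Young exponents (with the two $w$-factors in $L^\mu$ and $L^\sigma$, $\mu\ge q$, $\sigma\ge 2$, and $g_\infty\in L^c$ with $1/c=1-1/\mu-1/\sigma$) forces $1/c\ge \tfrac12-\tfrac1q$, so the requirement $1/c<\a/d$ gives $q<\tfrac{2d}{d-2\a}$; duality then only reaches $q\in\bigl(\tfrac{2d}{d+2\a},\tfrac{2d}{d-2\a}\bigr)$. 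For $d=3$ and $\a$ near $1$ this is roughly $(6/5,6)$, while the lemma is invoked on $\mathbb X_d=L^2\cap L^{10}$, so $q=10$ is needed and the argument does not close. The paper instead bounds $\calA_{u,\a,p}$ directly via Hardy--Littlewood--Sobolev (equivalently, Young with $|\cdot|^{-\a}$ in weak $L^{d/\a}$), treating $q<\tfrac{2d}{d-\a}$ and $q\ge\tfrac{2d}{d-\a}$ separately, and this covers all of $(1,\infty)$. In (4), the Hilbert--Schmidt integral $\iint |w(x)|^2|w(y)|^2|x-y|^{-2\a}\,dx\,dy$ is finite only if $2\a<d$; for $d\in\{4,5\}$ with $\a$ near $d-2$ one has $2\a\ge d$, the kernel $|x-y|^{-2\a}$ is not locally integrable, and the integral diverges at the diagonal for any nonzero $w$. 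Thus $\calA_{u,\a,p}$ is \emph{not} Hilbert--Schmidt on $L^2$ in two of the three dimensions, and the Krasnoselskii interpolation cannot even start; for $d=3$ it would start but would also require the $L^q$-boundedness from (3), inheriting the gap there. The paper proves compactness on each $L^q$ directly by Fr\'echet--Kolmogorov: equicontinuity comes from the pointwise estimate $\bigl||x+h|^{-\a}-|x|^{-\a}\bigr|\lesssim |h|^{\b}\bigl(|x+h|^{-\a-\b}+|x|^{-\a-\b}\bigr)$ with $\b=\min\{1,(d-\a)/2\}$ together with the hypothesis $u\in W^{1,r}$ (which supplies a Lipschitz modulus for $|u|^{p-2}u$, not merely decay as you suggest), and localization uses $u\in L^2$.
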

			
			\begin{proof}
				(1) Using H\"older inequality, we estimate the $L^\infty$ norm of $V_{u, \a, p}$ as a function
				\begin{align*} 
					\| V_{u, \a, p}\|_{L^\infty} & \le \| |\cdot|^{-\a} * |u|^p \|_{L^\infty} \| |u|^{p-2} \|_{L^\infty} \\
					& \le \| |\cdot|^{-\a} \|_{L^1 + L^\infty} \| |u|^p\|_{L^1\cap L^\infty} \| u \|_{L^\infty}^{p-2} \lesssim_{d, \a, p, \| u \|_{L^2 \cap L^\infty}} 1
				\end{align*}
				This immediately implies that $V_{u, \a, p} : L^q \to L^q$ is bounded for any $q \in (1, \infty)$. 
				
				(2) Moreover, when $u \in C^0$, we claim that the function $V_{u, \a, p}$ vanishes at infinity and is uniformly continuous: for every $\e > 0$, there exists $\d > 0$ such that 
				\beq | V_{u, \a, p}(x) - V_{u, \a, p}(y) | \le \e,\quad \forall\,\, |x-y| \le \d. \label{unicont}  \eeq
				Indeed, the vanishing comes from $u \in L^\infty$ and $|\cdot|^{-\a} * |u|^p$ decays at infinity since $|\cdot|^{-\a} \in L^1 + L^\infty$ and $|u|^p \in L^1 \cap L^\infty$. The uniform continuity of $V_{u, \a, p}$ then follows its local continuity which comes from $u \in C^0$.
				
				Now we prove the compactness via Fr\'echet-Kolmogorov compactness theorem. Let $\{f_n\}_n \subset W^{1, q}$ be a bounded sequence. Then $\{V_{u, \a, p} f_n\}$ is uniformly bounded in $L^q$ from the argument above. We need to show $\{V_{u, \a, p}f_n\}_n$ is equicontinuous and uniformly localized in $L^q$. Indeed,
				\begin{itemize} 
					\item Equicontinuous: For $h \in \real^d$, (\ref{unicont}) indicates
					\begin{align*} 
						&\| (V_{u, \a, p}f_n)(\cdot+ h) - V_{u, \a, p}f_n \|_{L^q} \\
						\le & \| (V_{u, \a, p}(\cdot + h) - V_{u, \a, p})f_n(\cdot + h) \|_{L^q} + \| V_{u, \a, p} (f_n(\cdot+ h) - f_n) \|_{L^q}  \\
						\le & \| f_n \|_{L^q} \cdot o_{h\rightarrow 0}(1) + \| V_{u, \a, p}\|_{L^\infty} \| f_n \|_{W^{1, q}} |h| = o_{h\rightarrow 0}(1) 
					\end{align*}
					\item Uniform localization: since $V_{u, \a, p}$ vanishes at infinity,
					\[\|V_{u, \a, p}f_n\|_{L^q(B_R^c)} \le \| V_{u, \a, p} \|_{L^\infty(B_R^c) } \| f_n\|_{L^q} = o_{R\rightarrow \infty}(1). \]
				\end{itemize}
				So Fr\'echet-Kolmogorov compactness theorem verifies the precompactness. 
				
				(3) We divide two cases for boundedness. When $q \in (1, \frac{2d}{d-\a})$, boundedness follows
				\beq\begin{split}
					\| \calA_{u, \a, p} f \|_{L^q} &\le \left\| |\cdot|^{-\a} * |u|^{p-2}uf \right\|_{L^{\left(\frac{1}{q} - \frac{d-\a}{2d}\right)^{-1}}} \left\| |u|^{p-1} \right\|_{L^\frac{2d}{d-\a}} \\
					&\lesssim_{d, \a} \|f\|_{L^q}\left\| |u|^{p-1} \right\|_{L^\frac{2d}{d-\a}}^2 \lesssim_{d, \a, p} \| f \|_{L^q}.
				\end{split}\label{nonest1}\eeq
				And when $q \ge \frac{2d}{d-\a} > 2$, we estimate as
				\beq\begin{split}
					\| \calA_{u, \a, p} f \|_{L^q} &\le \left\| |\cdot|^{-\a} * |u|^{p-2}uf \right\|_{L^{2q}} \left\| |u|^{p-1} \right\|_{L^{2q}} \\
					&\lesssim_{d, \a} \|f\|_{L^{q}} \left\| |u|^{p-1} \right\|_{L^{\left(\frac{d-\a}{d} - \frac{1}{2q}\right)^{-1}}} \left\| |u|^{p-1} \right\|_{L^{2q}} \lesssim_{d, \a, p} \| f \|_{L^q}.
				\end{split} \label{nonest2}\eeq
				
				(4) Again, take a bounded sequence $\{f_n\}_n \subset L^q$. We verify the equicontinuity and uniform localization of $\{\calA_{u, \a, p}f_n\}_n$ to confirm compactness of $\calA_{u, \a, p}$. 
				\begin{itemize} 
					\item Equicontinuous: We first prove a pointwise estimate: for any $x, h \in \real^d$, $\a \in (0, d) $,
					\beq \left| |x+h|^{-\a} - |x|^{-\a}  \right| \lesssim_{d, \a} |h|^{\b(d, \a)} \left(  |x+h|^{-\a-\b(d, \a)} + |x|^{-\a-\b(d, a)} \right),\label{difference} \eeq
					where $$\b(d, \a) = \min\{1, \frac{d-\a}{2} \}.$$ Indeed, when $|x| \ge 2 |h|$, we have
					\begin{align*}
						\left| |x+h|^{-\a} - |x|^{-\a}  \right| &\le \left| \int_0^1 \frac{1}{-\a} |x+th|^{-\a-2}(x+th) \cdot h dt \right| \\
						&\le \a^{-1} \left(\frac{1}{2}|x|\right)^{-\a - 1} |h| \lesssim_{d, \a} |x|^{-\a-\b} |h|^{\b}.
					\end{align*}
					And when $|x| < 2 |h|$, we have $\max\{|x|, |x+h|\} \le 3 |h|$, so 
					\begin{align*}
						\left| |x+h|^{-\a} - |x|^{-\a}  \right| &\le  |x+h|^{-\a} + |x|^{-\a}  \le \left( |x+h|^{-\a-\b} + |x|^{-\a-\b} \right) |3h|^{\b}. 
					\end{align*}
					
					For any $h \in \real^d$,
					\begin{align}
						&\left\| (\calA_{u, \a, p} f_n)(\cdot + h) - \calA_{u, \a, p}f_n \right\|_{L^q} \nonumber\\
						\le & \left\| [(|\cdot|^{-\a} * (|u|^{p-2}uf_n))(\cdot + h) - (|\cdot|^{-\a} * (|u|^{p-2}uf_n))(\cdot)] (|u|^{p-2}u)(\cdot+h) \right\|_{L^q} \label{second}\\
						+& \left\| (|\cdot|^{-\a} * (|u|^{p-2}uf_n)) [(|u|^{p-2}u)(\cdot+h) - (|u|^{p-2}u)] \right\|_{L^q}. \label{third}
					\end{align}
					We will show that (\ref{second}) and (\ref{third}) are $o_{h\rightarrow 0}(1)$ uniformly for $f_n$ bounded in $L^q$, which verifies equicontinuity. From
					\begin{align*}
						\left\| (|u|^{p-2}u)(\cdot+h) - (|u|^{p-2}u)\right\|_{L^r} \lesssim_{p} \| u\|_{L^{\infty}}^{p-2} \|\nabla u \|_{L^r} |h|, \quad r \in ( 1, \infty),
					\end{align*}
					the term (\ref{third}) is estimated as (\ref{nonest}) and (\ref{nonest2}) to be $o_{h \rightarrow 0}(1)$. And from (\ref{difference}), 
					\begin{align*} &\left|(|\cdot|^{-\a} * (|u|^{p-2}uf_n))(x + h) - (|\cdot|^{-\a} * (|u|^{p-2}uf_n))(x)\right| \\
						\lesssim_{d, \a}& |h|^{\b} \left[ \left|(|\cdot|^{-\a-\b} * (|u|^{p-2}uf_n))(x + h)\right| + \left|(|\cdot|^{-\a-\b} * (|u|^{p-2}uf_n))(x)\right|\right].
					\end{align*}
					Note that $\a + \b \in (0, d)$ and $\b > 0$, we can similarly bound (\ref{second}) to be $o_{h\rightarrow 0}(1)$. 
					
					\item Uniform localization: Estimate as (\ref{nonest1}) and (\ref{nonest2}),
					\begin{align*}
						\| \calA_{u, \a, p} f_n\|_{L^q(B_R^c)} &= \| (|\cdot|^{-\a} * (|u|^{p-2}uf_n)) |u\chi_{B_R^c}|^{p-1}\|_{L^q}  \\
						& \lesssim_{d, \a} \left\{\begin{array}{ll}
							\| f_n \|_{L^q} \| |u|^{p-1} \|_{L^\frac{2d}{d-\a}} \| |u|^{p-1} \|_{L^\frac{2d}{d-\a}(B_R^c)}& q \in (1, \frac{2d}{d-\a}), \\ 
							\| f_n \|_{L^q} \| |u|^{p-1} \|_{L^{\left(\frac{d-\a}{d} - \frac{1}{2q} \right)^{-1}}} \| |u|^{p-1}\|_{L^{2q} (B_R^c)} &  q \in [\frac{2d}{d-\a},\infty)
						\end{array} \right.\\
						&  = o_{R\rightarrow \infty}(1).
					\end{align*}
				\end{itemize}
			\end{proof}
			
			The following miscellaneous estimates will be used in proving Proposition \ref{propasym} and Theorem \ref{thmnondeg} in \S \ref{s4}.
			
			\begin{lem} \label{lemdiff} For $d \in \{ 3, 4, 5\}$, there exists $\d >0$ such that the following statements hold. For $(\a, p) \in \Omega_{d, \d}$, $u_0, u \in L^2 \cap L^\infty$ and $u_0$ positive and radially decreasing, we have
				\begin{align}
					\| V_{u_0, d-2, 2} u_0 - V_{u, \a, p} u \|_{L^{\frac{2d}{d-2}}} &\lesssim_{d, \d, u_0} o_{|\a - (d-2)| + |p-2|\to 0} (1) + \| u_0 - u \|_{L^2 \cap L^{\frac{2d}{d-2}}}   \label{contineq1}\\
					\| V_{u_0, d-2, 2} - V_{u, \a, p} \|_{L^\infty} &\lesssim_{d, \d, u_0} o_{|\a - (d-2)| + |p-2| + \| u - u_0 \|_{L^2 \cap L^\infty}\to 0} (1) \label{contineq2}\\
					\| \calA_{u_0, d-2, 2} - \calA_{u, \a, p} \|_{L^2 \to L^2} &\lesssim_{d, \d, u_0} o_{|\a - (d-2)| + |p-2| \to 0} (1) + \| u - u_0 \|_{L^2 \cap L^\infty}. \label{contineq3}
				\end{align} 
			\end{lem}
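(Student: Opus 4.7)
The plan is to prove all three bounds by the same structural device used in Lemmas \ref{lembdd} and \ref{lemcont}: insert intermediate terms that change only one parameter at a time (the Riesz exponent $\alpha$, the power $p$, and the function $u$), then estimate each piece via H\"older combined with the Hardy-Littlewood-Sobolev inequality, exploiting the radially decreasing positivity of $u_0$ through Proposition \ref{propnonlocal1}.

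For \eqref{contineq1}, I would write
\[ V_{u_0,d-2,2}u_0 - V_{u,\alpha,p}u = [V_{u_0,d-2,2} - V_{u_0,d-2,p}]u_0 + [V_{u_0,d-2,p} - V_{u_0,\alpha,p}]u_0 + V_{u_0,\alpha,p}(u_0 - u) + [V_{u_0,\alpha,p} - V_{u,\alpha,p}]u, \]
bound the $L^{2d/(d-2)}$-norm of each summand by the standard split $\|(|\cdot|^{-\alpha}*f)g\|_{L^{2d/(d-2)}}\lesssim\|f\|_{L^{2d/(d+2)}}\|g\|_{L^\infty}$ (or analogous H\"older/HLS variants), and recognize the first two pieces as $o(1)$ as $(\alpha,p)\to(d-2,2)$ (via dominated convergence on $u_0^p-u_0^2$ and the kernel smallness $\||x|^{-\alpha}-|x|^{-(d-2)}\|_{L^{d/(d-2-2\delta)}+L^{d/(d-2+2\delta)}}=o(1)$ already used in Lemma \ref{lemcont}), while the last two pieces contribute $\|u-u_0\|_{L^2\cap L^{2d/(d-2)}}$ after interpolation. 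For \eqref{contineq2}, Proposition \ref{propnonlocal1} combined with $u_0\in L^2\cap L^\infty$ gives a uniform $L^\infty$ bound on $|\cdot|^{-\alpha}*u_0^p$ valid for $(\alpha,p)\in\Omega_{d,\delta}$, so the same three-way splitting measured in $L^\infty$ yields the result; the $u\to u_0$ piece consumes $\|u-u_0\|_{L^2\cap L^\infty}\to 0$ through the pointwise bound applied to $u_0+u$. For \eqref{contineq3}, an identical splitting together with Lemma \ref{lemcpt}(3) reduces matters to $L^2\to L^2$ operator-norm control of single-parameter changes of $\calA_{u,\alpha,p}$, which follows line-by-line the calculation of Lemma \ref{lemcont}(1) rewritten in the $L^2\cap L^\infty$ setting rather than $\mathbb X_d$.

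The main obstacle throughout is the $p\mapsto 2$ piece of each estimate: $u_0^{p-2}-1$ tends to zero only pointwise, and not uniformly either where $u_0$ is small or where $u_0$ is large. The remedy, as in the proof of Lemma \ref{lemcont}(1), is to partition $\real^d$ into $\{u_0<1/M\}\cup\{1/M\le u_0\le M\}\cup\{u_0>M\}$, first choose $M$ large depending on $u_0$ so that the outer two regions are negligible in the Lebesgue norms appearing after the H\"older/HLS step (using $u_0\in L^2\cap L^\infty$ essentially), and then shrink $\delta$ so that the quantitative bound $|u_0^{p-2}-1|\le C(M)|p-2|$ makes the middle region small. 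The positivity and radial monotonicity of $u_0$ enter here both through Proposition \ref{propnonlocal1} and through the fact that $\{u_0<1/M\}$ is the exterior of a (large) ball on which $u_0$ is controlled.
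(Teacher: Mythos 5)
Your proposal is correct and follows essentially the same route as the paper: the single-parameter decomposition (vary $p$, then $\a$, then $u$), H\"older/Hardy--Littlewood--Sobolev estimates for each piece, kernel smallness in a sum of Lebesgue spaces for the $\a$-variation, and the $M$-partition of the range of $u_0$ (using its positivity and radial monotonicity so that $\{u_0<1/M\}$ is the exterior of a ball) for the $p$-variation. The only cosmetic difference is that the paper measures the first two pieces in $L^\infty$ of the potential before multiplying by $u_0$, and for \eqref{contineq2} the delicate term is really $|u_0|^{p-2}-|u|^{p-2}$ rather than $u_0^{p-2}-1$, but your partition remedy handles it identically.
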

			
			\begin{proof}
				The proof of these inequalities are like those in Lemma \ref{lemcont}, with main difference comes from the choice of norms since we have better control of $u_0$ and $u$ this time. So we only sketch the proof. 
				
				(1) Proof of \eqref{contineq1}. It suffices to show the following two estimates:
				\begin{align}
					\| V_{u_0, d-2, 2} - V_{u_0, d-2, p}\|_{L^{\infty}} &\lesssim_{d, \d, u_0}  o_{|p-2|\to 0} (1) \label{contineq11} \\
					\| V_{u_0, d-2, p} - V_{u_0, \a, p}\|_{L^{\infty}} &\lesssim_{d, \d} \| u_0 \|_{L^2 \cap L^\infty}^{2p-2}\cdot   o_{|\a - (d-2)|\to 0} (1) \label{contineq12} \\
					\| V_{u_0, \a, p} u_0 - V_{u, \a, p} u \|_{L^{\frac{2d}{d-2}}} &\lesssim_{d, \d} \| u_0 - u \|_{L^2 \cap L^{\frac{2d}{d-2}}} \left( \| u_0 \|_{L^2 \cap L^\infty}^{2p-1} + \| u\|_{L^2 \cap L^\infty}^{2p-1} \right)  \label{contineq13}
				\end{align}
				
				Inequality \eqref{contineq11} is exactly included in the proof of \eqref{v1} in Lemma \ref{lemcont} (3) (where we utilize the positive and radially decreasing of $u$). For \eqref{contineq12}, we adjust the exponents in proving \eqref{v2}:
				\begin{align*}
					&\| V_{u_0, d-2, p} - V_{u_0, \a, p}\|_{L^{\infty}} \le \| |u_0|^{p-2}\|_{L^\infty} \left\| (|\cdot|^{-(d-2)} - |\cdot|^{-\a}) * |u_0|^p\right\|_{L^\infty} \\
					& \quad \le \| u_0\|_{L^\infty}^{p-2} \| |\cdot|^{-(d-2)} - |\cdot|^{-\a} \|_{L^1 + L^\infty} \| u_0\|_{L^p \cap L^\infty}^p \le \| u_0 \|_{L^2 \cap L^\infty}^{2p-2}\cdot   o_{|\a - (d-2)| \to 0} (1)
				\end{align*}
				And for \eqref{contineq13}, it can be deduced easily from H\"older and Hardy-Littlewood-Sobolev estimates like \eqref{cala3}.
				
				(2) Proof of \eqref{contineq1}. Given \eqref{contineq11} and \eqref{contineq12}, we only need to estimate 
				\[ \|  V_{u_0, \a, p} - V_{u, \a, p} \|_{L^\infty} \le \| (|\cdot|^{-\a} * (|u_0|^p - |u|^p)) |u_0|^{p-2} \|_{L^\infty} + \|  (|\cdot|^{-\a} * |u|^p) ( |u_0|^{p-2} - |u|^{p-2}) \|_{L^\infty}. \]
				The only tricky part is the second term, where we take an $M > \|u_0\|_{L^\infty}$ and let $R(M)$ be such that $B_{R(M)} = \{ u \ge M^{-1}\}$. Then when $\| u_0 - u\|_{L^\infty}$ is bounded, we have
				\[\|  (|\cdot|^{-\a} * |u|^p) ( |u_0|^{p-2} - |u|^{p-2}) \|_{L^\infty(B_{R(M)}^c)} = o_{M \to \infty}(1) \]
				due to the vanishing of $|\cdot|^{-\a} * |u|^p$ at infinity. And suppose $\| u_0 - u\|_{L^\infty} \le M^{-2}$, we can estimate the $L^\infty(B_{R(M)})$ part using
				\begin{align*} | |u_0|^{p-2} - |u|^{p-2} | 
					= | u_0 |^{p-2} \left| 1 - \left( 1 - \frac{|u_0| - |u|}{|u_0|}\right)^{p-2} \right| 
					\lesssim_\d \| u_0 \|_{L^\infty}^{p-2}  |p-2|\frac{|u_0 - u|}{M^{-1}}.
				\end{align*}
				
				(3) Proof of \eqref{contineq3}. This is almost the same as \eqref{cala1}-\eqref{cala3} using perturbation of \eqref{2dd}.
			\end{proof}
			
			\bibliographystyle{siam}
			\bibliography{Bib-1}
			
		\end{document}